\numberwithin{equation}{section}
\newtheorem{theorem}{Theorem}
\newtheorem{lemma}{Lemma}[section]
\newtheorem{thm}[lemma]{Theorem}
\newtheorem{prop}[lemma]{Proposition}
\newtheorem{definition}[lemma]{Definition}
\newtheorem{corollary}[lemma]{Corollary}
\newtheorem{remark}[lemma]{Remark}
\newcommand{\add}{\rm{add}}
\newcommand{\pd}{\rm{pd}}
\newcommand{\Ext}{\rm{Ext}}
\newcommand{\End}{\rm{End}}
\newcommand{\Hom}{\rm{Hom}}
\newcommand{\mo}{\rm{mod}}
\newcommand{\id}{\rm{id}}
\newcommand{\dom}{\rm{domdim}}
\newcommand{\gl}{\rm{gld}}
\newcommand{\op}{\rm{op}}
\begin{document}
\title{\bf Algebras with finite relative dominant dimension and almost n-precluster tilting modules }
\author{Shen Li\quad and\quad  Shunhua Zhang}
\date{}
\maketitle

\begin{center}\section*{}\end{center}

\begin{abstract}
In this paper, we investigate the relative dominant dimension with respect to an injective module and characterize the algebras with finite relative dominant dimension. As an application, we introduce the almost $n$-precluster tilting module and establish a correspondence between  almost $n$-precluster tilting modules and almost $n$-minimal Auslander-Gorenstein algebras. Moreover, we give a description of the Gorenstein projective modules over almost $n$-minimal Auslander-Gorenstein algebras in terms of the corresponding almost $n$-precluster tilting modules.
\end{abstract}

{\small {\bf Key words and phrases:} \ {\rm Relative dominant dimension, Almost $n$-minimal Auslander-Goresntein algebra, Almost $n$-precluster tilting module}}

{\small {\bf Mathematics Subject Classification 2010:} \ {\rm 16E10, 16E65, 16G10}}

\vskip0.2in

\section{Introduction}

Let $\Lambda$ be an Artin algebra and $I$ be an injective $\Lambda$-module. We say the relative dominant dimension of $\Lambda$ with respect to $I$ is at least $n+1$ if $\Lambda$ has a minimal injective resolution $0 \rightarrow \Lambda \rightarrow I_{0} \rightarrow I_{1} \rightarrow \cdots \rightarrow I_{n}\rightarrow \cdots$ with $I_{0},I_{1},\ldots, I_{n} \in$ \add\,$I$, denoted by $I$-${\dom}\,\Lambda\geq n+1$. If $I$ is a maximal projective-injective summand of $\Lambda$, $I$-${\dom}\,\Lambda$ is called the dominant dimension of $\Lambda$. The dominant dimension of an algebra, as a kind of  important homological dimension, measures how far this algebra is from being self-injective. In representation theory, algebras with finite  dominant dimension are of particular interest. For instance, algebras with finite dominant were characterized by the existence of certain tilting modules in \cite{NRTZ} and \cite{PS}. Any algebra with dominant dimension at least 2 is the endomorphism algebra of a generator-cogenerator over an algebra, see \cite{AS} and \cite{BJM}. Iyama and Solberg studied algebras whose dominant dimensions are equal to their Gorenstein dimensions in \cite{IS}. Then it is natural to consider whether we can generalize these results to algebras with finite relative dominant dimension. In this paper, we investigate the relative dominant dimension with respect to an injective module whose projective dimension is finite. The main tool that we use is the relative homology and relative cotilting theory introduced in \cite{AS1,AS2, AS3}.

Assume $I$ has projective dimension at most $m$, then $I$-${\dom}\,\Lambda\geq n+1$ implies that $\Lambda$ satisfies the $(m+1,n+1)$-condition which is introduced in \cite{I1}. Let $0 \rightarrow \Lambda \rightarrow I_{0} \rightarrow I_{1} \rightarrow \cdots \rightarrow I_{n}\rightarrow \cdots$ be a minimal injective resolution of $\Lambda$. Recall that an algebra $\Lambda$ is said to satisfy the $(m+1,n+1)$-condition if ${\pd}\,I_{j}\leq m$ holds for $0\leq j \leq n$.  Since the $(m+1,n+1)$-condition is not left-right symmetric, we say $\Lambda$ satisfies the two-sided $(m+1,n+1)$-condition if both $\Lambda$ and $\Lambda^{\op}$ satisfy the $(m+1,n+1)$-condition. Iyama established the relative version of higher Auslander correspondence to characterize algebras satisfying the two-sided $(m+1,n+1)$-condition, see \cite{I2} for details. We study and give a characterization of algebras satisfying the one-sided $(m+1,n+1)$-condition, that is algebras $\Lambda$ with $I$-${\dom}\,\Lambda\geq n+1$ for an injective module $I$ whose projective dimension is at most $m$.

First we show how to construct the algebra $\Lambda$ with $I$-${\dom}\,\Lambda\geq n+1$.

\begin{theorem}{\rm (Theorem 3.3)}Let $A$ be an Artin algebra, $M$ be an $A$-module and $\Lambda={\End}_{A}\,M$. If $M$ satisfies $(i)$ $DA\in {\add}\,M$, $(ii)$ ${\Ext}^{i}_{A}(M,M)=0$ for $1\leqslant i \leqslant n-1$ $(n\geq 1)$ and $(iii)$ $M$-{\rm codim}$\,A\leqslant m$ $(m\geqslant 0)$, we have the following.

$(1)$ $_{\Lambda}I={\Hom}_{A}(A,M)$ is an injective $\Lambda$-module with ${\pd}_{\Lambda}\,I\leqslant m$.

$(2)$ $A\cong {\End}_{\Lambda}I$ and $I\text{-}{\dom}\,\Lambda\geq n+1$.
\end{theorem}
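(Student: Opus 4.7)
The plan is to handle the four assertions in turn, exploiting throughout the contravariant functor $\Hom_A(-,M)\colon (\mo A)^{\op}\to\mo\Lambda$, which restricts to an equivalence between $\add M$ and the category of projective left $\Lambda$-modules. For the injectivity of ${_{\Lambda}}I={_{\Lambda}}\Hom_A(A,M)={_{\Lambda}}M$, I would use condition (i) together with the natural isomorphism $\Hom_A(M,DA)\cong DM$ of right $\Lambda$-modules: writing $DA$ as a summand of $M^{k}$ and applying $\Hom_A(M,-)$ shows $DM$ is a summand of $\Lambda^{k}$ as a right $\Lambda$-module, hence projective, so ${_{\Lambda}}M$ is injective by the duality $D$.

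For the bound $\pd_{\Lambda}I\leq m$, I would apply $\Hom_A(-,M)$ to the $\add M$-coresolution $0\to A\to M_0\to\cdots\to M_m\to 0$ from (iii), chosen so that each inclusion $X_i\hookrightarrow M_i$ of the $i$-th cosyzygy (with $X_0=A$) is a left $\add M$-approximation. The resulting complex
\begin{equation*}
0 \to \Hom_A(M_m,M) \to \cdots \to \Hom_A(M_0,M) \to I \to 0
\end{equation*}
consists of projective left $\Lambda$-modules. The approximation property yields surjectivity $\Hom_A(M_i,M)\twoheadrightarrow \Hom_A(X_i,M)$, which combined with $\Ext^1_A(M_i,M)=0$ from (ii) gives $\Ext^1_A(X_{i+1},M)=0$ via the long exact sequence; splicing the resulting short exact sequences $0\to \Hom_A(X_{i+1},M)\to\Hom_A(M_i,M)\to\Hom_A(X_i,M)\to 0$ of $\Lambda$-modules then exhibits the displayed complex as a projective resolution of $I$ of length at most $m$.

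For part (2), the isomorphism $A\cong\End_{\Lambda}I$ is the classical double centralizer theorem: since $DA\in\add M$ makes $M$ a cogenerator over $A$, the natural ring map $A\to\End_{\Lambda}({_{\Lambda}}M)^{\op}$ is an isomorphism. For $I$-$\dom\,\Lambda\geq n+1$, I would take a projective resolution $\cdots\to P_1\to P_0\to M\to 0$ of $M$ in $\mo A$ and apply $\Hom_A(-,M)$ to obtain the augmented complex
\begin{equation*}
0 \to \Lambda \to \Hom_A(P_0,M) \to \Hom_A(P_1,M) \to \cdots ,
\end{equation*}
in which each $\Hom_A(P_i,M)\in\add\Hom_A(A,M)=\add I$ is an injective $\Lambda$-module by part (1). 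The cohomology at position $j\geq 1$ equals $\Ext^j_A(M,M)$, which vanishes for $1\leq j\leq n-1$ by (ii), so the complex is exact at positions $-1,0,\ldots,n-1$. In particular the $n$-th cosyzygy of $\Lambda$ in this (not necessarily minimal) coresolution embeds into $\Hom_A(P_n,M)\in\add I$, so the first $n+1$ injective envelopes $I_0,\ldots,I_n$ of the minimal injective resolution all lie in $\add I$, proving $I$-$\dom\,\Lambda\geq n+1$.

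The hard part will be the exactness argument in the bound $\pd_{\Lambda}I\leq m$: a direct application of $\Hom_A(-,M)$ to the coresolution would ostensibly require $\Ext^j_A(M,M)=0$ for $j$ up to $m$, which is not supplied by (ii). The trick is to insist that the coresolution be built from left $\add M$-approximations, whereby the relative homology machinery of Auslander-Solberg \cite{AS1,AS2,AS3} converts the single vanishing $\Ext^1_A(M_i,M)=0$ into the cosyzygy-level vanishings $\Ext^1_A(X_{i+1},M)=0$ that propagate along the splicing.
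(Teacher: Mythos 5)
Your proof is correct and follows essentially the paper's route, with one genuine difference: the isomorphism $A\cong\End_\Lambda I$. The paper derives it from the relative cotilting machinery, observing that $M$ is an $F^M$-cotilting module (since $\mathcal{I}(F^M)=\add(DA\oplus M)=\add M$) and invoking Theorem 2.2(1), whereas you invoke the double centralizer property for a cogenerator directly: $DA\in\add M$ makes $\Hom_A(-,M)$ fully faithful, and evaluating at $A$ gives the ring isomorphism. Your route is more elementary and self-contained; the paper's route is perhaps chosen because the relative cotilting viewpoint (identifying $M$ as $F^M$-cotilting) is needed again in Section 4 anyway, so setting it up here costs nothing. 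Both are valid.

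One small slip worth correcting: in the $\pd_\Lambda I\leq m$ argument you claim $\Ext^1_A(M_i,M)=0$ ``from (ii)'', but (ii) is vacuous when $n=1$, so this deduction fails in that case. Fortunately you don't need it. The surjectivity $\Hom_A(M_i,M)\twoheadrightarrow\Hom_A(X_i,M)$ is exactly the left $\add M$-approximation property of $X_i\to M_i$, which together with left exactness of $\Hom_A(-,M)$ already gives the short exact sequences $0\to\Hom_A(X_{i+1},M)\to\Hom_A(M_i,M)\to\Hom_A(X_i,M)\to 0$ that you splice. Delete the $\Ext^1$ clause and the argument is clean for all $n\geq 1$.
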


Conversely, we prove that any algebra $\Lambda$ with $I\text{-}{\dom}\,\Lambda\geq n+1$ is the endomorphism algebra of a cogenerator satisfying certain conditions.

\begin{theorem}{\rm (Theorem 3.6)}Let $\Lambda$ be an Artin algebra, $I$ be an injective $\Lambda$-module with projective dimension at most $m$ $(m\geq0)$ and $A={\End}_{\Lambda}I$. If $I$-{\rm domdim}\,$\Lambda\geq n+1$ $(n\geq 1)$, we have the following.

$(1)$ $_{A}M={\Hom}_{\Lambda}(\Lambda, I)$ is a cogenerator over $A$ and $M$-{\rm codim}$\,A\leqslant m$.

$(2)$ ${\Ext}^{i}_{A}(M,M)=0$ for $1\leq i\leq n-1$.

$(3)$ $\Lambda\cong{\End}_{A}\,M$.
\end{theorem}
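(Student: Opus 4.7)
My plan is to run the construction of Theorem~3.3 backwards, using two complexes: the finite projective resolution of $I$ over $\Lambda$, which controls part~(1), and the minimal injective resolution of $\Lambda$, whose first $n+1$ terms lie in $\add I$ and which controls parts~(2) and~(3). For part~(1), start from a projective resolution $0 \to P_m \to \cdots \to P_0 \to I \to 0$ of $I$ over $\Lambda$, which exists because $\pd_\Lambda I \leq m$. Applying the contravariant functor $\Hom_\Lambda(-, I)$, exact since $I$ is injective, produces the exact sequence
\[
0 \to A \to \Hom_\Lambda(P_0, I) \to \cdots \to \Hom_\Lambda(P_m, I) \to 0
\]
in $\mo A$. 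Each $\Hom_\Lambda(P_j, I)$ lies in $\add\Hom_\Lambda(\Lambda, I) = \add M$, so this is an $\add M$-coresolution of $A$ of length at most $m$, yielding simultaneously the bound $M$-${\rm codim}\,A \leq m$ and the cogenerator property.

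For parts~(2) and~(3), apply the same exact contravariant functor to the minimal injective resolution $0 \to \Lambda \to I_0 \to I_1 \to \cdots$. Because $I_0,\ldots,I_n \in \add I$, the first $n+1$ terms of the resulting exact sequence
\[
\cdots \to \Hom_\Lambda(I_n, I) \to \cdots \to \Hom_\Lambda(I_0, I) \to M \to 0
\]
lie in $\add\End_\Lambda(I) = \add A$, hence are projective $A$-modules. This is a partial projective resolution of $M$ of length~$n$ in $\mo A$. The key reflexivity lemma to prove is that for $X \in \add I$ the natural evaluation
\[
X \longrightarrow \Hom_A\bigl(\Hom_\Lambda(X, I),\, M\bigr)
\]
is an isomorphism, natural in $X$; I would verify it on the generator $X = I$, where both sides reduce to $I$, and extend additively.

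Applying $\Hom_A(-, M)$ to the partial projective resolution and using the reflexivity together with its naturality to identify $\Hom_A(\Hom_\Lambda(I_j, I), M) \cong I_j$ with differentials matching those of the original injective resolution, one obtains a complex
\[
0 \to \End_A M \to I_0 \to I_1 \to \cdots \to I_n
\]
whose right part truncates the injective resolution of $\Lambda$. By left exactness of $\Hom_A(-, M)$, $\End_A M \cong \ker(I_0 \to I_1) = \Lambda$, proving~(3); and for $1 \leq i \leq n-1$ the partial resolution is long enough that $\Ext^i_A(M, M)$ equals the $i$th cohomology of the truncation $I_0 \to I_1 \to \cdots \to I_n$, which vanishes by exactness of the injective resolution, proving~(2). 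The main obstacle is the reflexivity lemma together with its naturality, since matching the termwise data and differentials with the original injective resolution requires careful handling of the $\Lambda$-$A$-bimodule structure of $I$ and the sidedness conventions; once this is set up correctly the remaining exactness arguments are routine.
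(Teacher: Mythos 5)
Your approach to parts (2) and (3) is essentially the paper's: apply the exact functor $\Hom_\Lambda(-,I)$ to the minimal injective resolution of $\Lambda$ to obtain a partial projective resolution of $M$ of length $n$, then apply $\Hom_A(-,M)$ and use the reflexivity of modules in $\add I$ to identify the resulting complex with the truncated injective resolution $0\to\Lambda\to I_0\to\cdots\to I_n$, from which (3) falls out by left exactness and (2) by exactness at the middle terms. The paper instead invokes a citation for $\Lambda\cong\End_A M$ and draws the commutative diagram you describe, but your route through the same diagram is sound. For $M$-$\mathrm{codim}\,A\leq m$ you also match the paper, using the projective resolution of $I$ over $\Lambda$ and the injectivity of $I$; the paper additionally checks that the induced maps are left $\add M$-approximations (via the fact that $I$ dualizes $\Lambda$, which follows from $I_0,I_1\in\add I$), which is a detail worth being explicit about.

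However, there is a genuine gap in part (1). You assert that the $\add M$-coresolution $0\to A\to\Hom_\Lambda(P_0,I)\to\cdots\to\Hom_\Lambda(P_m,I)\to 0$ "yields simultaneously the bound $M$-$\mathrm{codim}\,A\leq m$ and the cogenerator property," but the existence of a finite coresolution of $A$ by modules in $\add M$ does not imply $DA\in\add M$. The cogenerator claim requires a separate argument. The paper proves it via the Nakayama functor: $DA = D\Hom_\Lambda(I,I)\cong\Hom_\Lambda(\nu_\Lambda^- I, I)\in\add\Hom_\Lambda(\Lambda,I)=\add M$. In fact the cogenerator property is logically prior to the coresolution argument, since the very notion of an $M$-coresolution in the paper's sense is set up under the standing hypothesis that $M$ is a cogenerator; so your ordering should be adjusted to establish $DA\in\add M$ first. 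Fill in this step before claiming part (1).
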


The above results provide a characterization of algebras with finite relative dominant dimension.

Now we focus our attention on a special kind of algebra with finite relative dominant dimension. Let $I$ be the direct sum of all pairwise non-isomorphic indecomposable injective $\Lambda$-modules with projective dimension at most 1. We say $\Lambda$ is an almost $n$-minimal Auslander-Gorenstein algebra (respectively, almost $n$-Auslander algebra) if it satisfies ${\id}\,\Lambda\leq n+1 \leq I\text{-}{\dom}\,\Lambda$ (respectively, ${\gl}\,\Lambda\leq n+1 \leq I\text{-}{\dom}\,\Lambda$ ). This kind of algebra was introduced by Adachi and Tsukamoto in \cite{AT} and was characterized by the existence of certain tilting-cotilting modules. Based on Theorem 1 and Theorem 2, we want to characterize the almost $n$-minimal Auslander-Gorenstein algebra from the viewpoint of higher Auslander correspondence (\cite{I2},\cite{IS}). This is the starting point of our paper.

The almost $n$-minimal Auslander-Gorenstein algebra is a generalization of the $n$-minimal Auslander-Gorenstein algebra. In \cite{IS}, Iyama and Solberg established a correspondence between $n$-precluster tilting modules and $n$-minimal Auslander-Gorenstein algebras. We generalize the $n$-precluster tilting module to the almost $n$-precluster tilting module (Definition 4.2) and prove the following correspondence for almost $n$-minimal Auslander-Gorenstein algebras. This is the main result of our paper.

\begin{theorem}{\rm (Theorem 4.11)}Assume $n\geq2$. There exists a bijection between the equivalence classes of almost n-precluster tilting modules over Artin algebras and the Morita-equivalence classes of almost n-minimal Auslander-Gorenstein algebras.
\end{theorem}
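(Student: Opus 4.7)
The plan is to use Theorems 3.3 and 3.6 as the machinery in both directions, specializing to $m=1$ and letting $I$ be the direct sum of all pairwise non-isomorphic indecomposable injective $\Lambda$-modules of projective dimension at most $1$, in accordance with the definition of almost $n$-minimal Auslander-Gorenstein algebra.

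For the forward direction, given an almost $n$-precluster tilting module $M$ over an Artin algebra $A$, I would set $\Lambda = {\End}_{A}\,M$. The defining conditions of Definition 4.2 will include that $DA \in {\add}\,M$, that ${\Ext}^{i}_{A}(M,M) = 0$ for $1 \leq i \leq n-1$, and that $M$-{\rm codim}$\,A \leq 1$, so Theorem 3.3 directly produces an injective $\Lambda$-module $_{\Lambda}I = {\Hom}_{A}(A,M)$ with ${\pd}_{\Lambda}\,I \leq 1$, with $A \cong {\End}_{\Lambda}\,I$, and with $I\text{-}{\dom}\,\Lambda \geq n+1$. Two further verifications are then needed: (i) every indecomposable injective $\Lambda$-module of projective dimension at most $1$ is a summand of this $I$, so that $I$ matches the distinguished injective in the definition of almost $n$-minimal Auslander-Gorenstein, and (ii) ${\id}\,\Lambda \leq n+1$, which must be translated via ${\Hom}_{A}(M,-)$ from the Gorenstein-type clause encoded in Definition 4.2.

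For the reverse direction, given an almost $n$-minimal Auslander-Gorenstein $\Lambda$ with its distinguished $I$, I would set $A = {\End}_{\Lambda}\,I$ and $M = {\Hom}_{\Lambda}(\Lambda, I)$. Theorem 3.6 supplies three of the required properties at once: $M$ is a cogenerator over $A$, $M$-{\rm codim}$\,A \leq 1$, ${\Ext}^{i}_{A}(M,M) = 0$ for $1 \leq i \leq n-1$, and $\Lambda \cong {\End}_{A}\,M$. The remaining conditions of Definition 4.2 (notably that $M$ is a generator and the Gorenstein-type condition) would be derived by passing ${\id}\,\Lambda \leq n+1$ through the functor ${\Hom}_{\Lambda}(I,-)$ and analyzing the minimal injective resolution of $\Lambda$: its first $n+1$ terms lie in ${\add}\,I$, while later terms are controlled by $\Lambda$ having finite injective dimension. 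Mutual inverseness is then immediate from the compatibility $A \cong {\End}_{\Lambda}\,I$ and $\Lambda \cong {\End}_{A}\,M$ in the two theorems, provided I also check that the equivalence of modules (additive closures) on one side corresponds to Morita equivalence on the other, which is standard.

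The main obstacle I expect is step (ii) in each direction: matching the finite injective dimension ${\id}\,\Lambda \leq n+1$ with the Gorenstein-like axiom in Definition 4.2. This will require a careful homological bookkeeping along the minimal injective resolution of $\Lambda$ and a corresponding resolution of $M$ by modules in ${\add}\,M$, using the relative cotilting framework referenced in the introduction. A secondary subtle point will be showing that the $I$ produced in the forward direction really is the full direct sum of indecomposable injectives of projective dimension at most $1$, rather than only a summand of it; I would handle this by noting that any such indecomposable injective $J$ satisfies $J \in {\add}({\Hom}_{A}(A,M))$ precisely because ${\Hom}_{\Lambda}(I,J)$ must be supported on the cogenerating summand $DA$ of $M$ and then invoking the Morita-type equivalence between ${\add}\,M$ and ${\add}\,I$.
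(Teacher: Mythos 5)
Your high-level plan of routing both directions through Theorems 3.3 and 3.6 with $m=1$ is indeed the scaffolding the paper uses (via Theorems 4.8 and 4.9). However, your proposal has two genuine gaps, and it also misstates what Definition 4.2 actually requires, which makes the gaps harder to close as you have framed them.

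First, Definition 4.2 has no ``Gorenstein-type clause'' and does \emph{not} require $M$ to be a generator. Its four conditions are: $M$ is a cogenerator, $\Ext^i_A(M,M)=0$ for $1\le i\le n-1$, $\tau_n M\in\add M$ where $\tau_n=\tau\Omega^{n-1}_A$, and $M\text{-codim}\,A\le 1$. The absence of the generator condition is precisely what the word ``almost'' signals, and Corollary 4.7 shows the generator case collapses back to $n$-precluster tilting. Because you assume an extra Gorenstein axiom on $M$ and a generator axiom, your bookkeeping strategy in both directions is aimed at the wrong target.

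Second, in the forward direction you flag the verification $\id\,\Lambda\le n+1$ but do not supply the mechanism. The paper's proof of Theorem 4.8 gets this from Lemma 4.5: applying $\Hom_A(-,M)$ to a minimal projective resolution of $M$ and cutting off after $n$ steps yields an exact sequence ending in $D\Hom_A(M,\tau_n M)$, and condition $\tau_n M\in\add M$ then makes that last term injective. Without invoking something like Lemma 4.5, there is no way to convert $\tau_n M\in\add M$ into a bound on $\id\,\Lambda$, and the ``translation via $\Hom_A(M,-)$'' you propose has nothing concrete to act on. Third, in the reverse direction the hard part is proving $\tau_n M\in\add M$, which Theorem 3.6 does not provide. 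The paper's Theorem 4.9 obtains it by building the $2$-cotilting $\Lambda$-module $U=\ker f_n\oplus I$ from the minimal injective resolution (using a result of Adachi--Tsukamoto), taking $I$ as a dualizing summand, and invoking Theorem 2.3 to conclude that $M$ is $F_{\Hom_\Lambda(U,I)}$-cotilting with $\id_F M=0$, whence $\mathcal{I}(F)=\add M$ and a further computation identifies $\tau\Hom_\Lambda(U,I)$ with $\tau_n M$. Your plan of ``passing $\id\,\Lambda\le n+1$ through $\Hom_\Lambda(I,-)$'' does not reach this; you would need the relative cotilting machinery, and in particular you would need to produce the module $U$ and know where the hypothesis $n\ge 2$ enters (namely, to guarantee the existence of this $2$-cotilting $U$ with $I$ as a dualizing summand, as Remark 4.10 explains). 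Finally, your secondary concern about whether the $I$ produced in the forward direction is the full sum of all indecomposable injectives of projective dimension at most $1$ is correct to worry about, and the paper resolves it via Proposition 3.1 with $m=1\le n$.
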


Moreover, we show some deep relations between almost $n$-precluster tilting modules and almost $n$-minimal Auslander-Gorenstein algebras. For an almost $n$-minimal Auslander-Gorenstein algebra $\Lambda$, we describe the Gorenstein projective $\Lambda$-modules in terms of the corresponding $A$-module $_{A}M={\Hom}_{\Lambda}(\Lambda,I)$ where $A={\End}_{\Lambda}I$.

\begin{theorem}{\rm (Theorem 4.13)}Let $F=F^{M}$ be an additive sub-bifunctor of ${\Ext}^{1}_{A}(-,-)$. Then the functor ${\Hom}_{A}(-,M)$ induces a duality between $M^{\perp_{F}}$ and $\mathcal{GP}(\Lambda)$. Moreover, $M^{\perp_{F}}/[M]$ and $\underline{\mathcal{GP}(\Lambda)}^{\op}$ are equivalent as triangulated categories.
\end{theorem}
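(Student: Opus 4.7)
The plan is to combine the correspondence established in Theorem 4.11 with the Auslander--Solberg relative cotilting theory and Buchweitz's description of the stable category of Gorenstein projectives. By Theorem 4.11, $\Lambda \cong {\End}_A M$ with $M$ almost $n$-precluster tilting, so I would first compute $\Phi = {\Hom}_A(-, M)$ on ${\add}\,M$: since $\Phi(M) \cong {\End}_A M \cong \Lambda$ as a right $\Lambda$-module, $\Phi$ restricts to the standard duality between ${\add}\,M$ and the subcategory of projective $\Lambda$-modules on the appropriate side.

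Next, I would describe $M^{\perp_F}$ via the relative cotilting theory coming from the subbifunctor $F=F^{M}$: its objects $X$ admit $F$-exact ${\add}\,M$-coresolutions together with ${\add}\,M$-resolutions controlled by the vanishing ${\Ext}^i_A(M,M)=0$ for $1\le i\le n-1$. Applying $\Phi$ to such a (co)resolution, the vanishing of ${\Ext}^i_A(M,M)$ ensures that the resulting complex of projective $\Lambda$-modules is exact, producing a projective resolution of $\Phi(X)$. To upgrade this to a complete resolution, I would dualize via ${\Hom}_\Lambda(-,\Lambda)$ and verify that the cotilting conditions on $M$ keep the dualized complex exact, using ${\id}\,\Lambda\le n+1$ from the almost $n$-minimal Auslander-Gorenstein hypothesis. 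This would show $\Phi(X) \in \mathcal{GP}(\Lambda)$.

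For the reverse direction, I would use $\Psi = {\Hom}_\Lambda(-, M)$ via the bimodule ${}_\Lambda M_A$. The unit and counit of the adjoint pair $(\Phi, \Psi)$ reduce on ${\add}\,M$ and on projective $\Lambda$-modules to the double-centralizer isomorphisms $\Lambda\cong{\End}_AM$ and $A\cong{\End}_\Lambda I$ from Theorems 3.3 and 3.6, and extend by a resolution d\'evissage to give mutual inverses on $M^{\perp_F}$ and $\mathcal{GP}(\Lambda)$. I expect the main obstacle to lie in essential surjectivity: given a Gorenstein projective $\Lambda$-module $Y$, one must lift its complete projective resolution to a two-sided ${\add}\,M$-resolution over $A$, and this is precisely where the almost condition ${\pd}_\Lambda I \le 1$ (rather than $I$ being projective as in the non-almost case) forces careful bookkeeping; here I would exploit condition $(1)$ of Theorem 3.6, namely that $M$ is a cogenerator with $M$-codimension $\le 1$, to control the first syzygy and glue the two half-resolutions.

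Finally, for the triangulated equivalence, both $M^{\perp_F}$ (with the $F$-exact structure, in which ${\add}\,M$ is the class of projective-injective objects) and $\mathcal{GP}(\Lambda)$ (with its canonical exact structure inherited from ${\mo}\,\Lambda$, in which the projective $\Lambda$-modules form the class of projective-injective objects) are Frobenius categories. The duality $\Phi$ is exact with respect to these structures and sends projective-injectives to projective-injectives, hence descends to an equivalence of the stable categories; the contravariance produces the opposite category, yielding the triangle equivalence $M^{\perp_F}/[M]\simeq\underline{\mathcal{GP}(\Lambda)}^{\op}$.
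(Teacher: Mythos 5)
Your plan does not use the one ingredient that makes the paper's proof short, namely the relative cotilting theorem (Theorem~2.2) combined with the identity $\mathcal{GP}(\Lambda) = {}^{\perp}\Lambda$ for Gorenstein $\Lambda$. The paper's argument is: since $\mathcal{I}(F) = \add\,M$ we have ${}^{\perp_F}M = A\text{-mod}$, so by Theorem~2.2 the functor $\Hom_A(-,M)$ is a duality between $A$-mod and ${}^{\perp}U$ for a cotilting $\Lambda$-module $U = \Hom_A(\mathcal{P}(F),M)$, and it carries $\Ext^i_F(M,L)$ to $\Ext^i_\Lambda(\Hom_A(L,M),\Lambda)$. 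Any Gorenstein projective $N$ lies in ${}^{\perp}U$ (because $\mathrm{id}_\Lambda U < \infty$), hence equals $\Hom_A(L,M)$ for some $L$; then $N \in {}^{\perp}\Lambda = \mathcal{GP}(\Lambda)$ iff $L \in M^{\perp_F}$. That converts both directions into a single Ext-vanishing equivalence and avoids any explicit construction of complete resolutions.

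By contrast your plan tries to build a complete projective resolution of $\Phi(X)$ by hand, and this is where it breaks down. You say $X \in M^{\perp_F}$ admits an $\add\,M$-resolution as well as an $F$-exact $\add\,M$-coresolution, but this is not correct: the $F$-projective objects are $\mathcal{P}(F)=\add(\tau^{-}M\oplus A)$, not $\add\,M$, so there is no $\add\,M$-resolution to apply $\Phi$ to, and consequently no readily available left half of a complete resolution. What you actually need in the forward direction is $\Ext^{>0}_\Lambda(\Phi(X),\Lambda)=0$, and the clean way to get that is precisely Theorem~2.2(3), which you never invoke. In the reverse direction you concede the difficulty yourself ("this is precisely where the almost condition forces careful bookkeeping") and replace the argument by a promise of d\'evissage; the paper sidesteps this entirely since ``every $N\in\mathcal{GP}(\Lambda)$ is of the form $\Hom_A(L,M)$'' follows at once from $N\in{}^{\perp}U$ and the Theorem~2.2 duality, with no lifting of complete resolutions required. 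Your last paragraph on the Frobenius-category reasoning is fine and matches the paper's part (1) plus the standard stable-category argument, but it depends on having the duality established, which your first two steps do not deliver.
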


Finally, we investigate a special class of almost $n$-precluster tilting modules (Definition 4.16) which correspond to almost $n$-Auslander algebras. For the case $n=2$, all the modules in this class share the same number of pairwise non-isomorphic indecomposable direct summands (Proposition 4.17).

This paper is organized as follows. In section 2, we recall the relative homology and relative cotilting theory which are the main tool in our paper. Section 3 is devoted to the proof of Theorem 1 and Theorem 2. In section 4, we introduce the almost $n$-precluster tilting module. Then we prove Theorem 3 and Theorem 4. The last part of section 4 gives some results on almost $n$-cluster tilting modules.

\section{Preliminaries}
Throughout this paper, all algebras are Artin algebras and all modules are finitely generated left modules. $D$ is the Matlis dual. For an algebra $A$, $A$-mod is the category of finitely generated $A$-modules. For an $A$-module $M$, add\,$M$ is the full subcategory of $A$-mod consisting of direct summands of finite direct sums of $M$. $M$ is called a generator (respectively, cogenerator) if $A\in {\add}M$ $({\rm respectively}, DA\in {\add}M)$. We denote by pd\,$M$ (respectively, id\,$M$) the projective dimension (respectively, injective dimension) of $M$. The composition of morphisms $f:X \rightarrow Y$ and $g:Y \rightarrow Z$ is denoted by $gf:X \rightarrow Z$.

The aim of this section is to recall some basic definitions and results on relative homology and relative cotilting theory. For a systematic study, refer to \cite{AS1,AS2,AS3}.

\subsection{Relative homology}
Let $A$ be an algebra and $F$ be an additive sub-bifunctor of ${\Ext}^{1}_{A}(-,-):(A\text{-}{\mo})^{\op}\times A\text{-}{\mo} \rightarrow \mathcal{A}b$, where $\mathcal{A}b$ is the category of abelian groups. A short exact sequence $0\rightarrow X \rightarrow Y \rightarrow Z \rightarrow 0$ is called $F$-exact if it is in $F(Z,X)$. An $A$-module $P$ is called $F$-projective if all $F$-exact sequences $0 \rightarrow X \rightarrow Y \rightarrow P \rightarrow 0$ split. The $F$-injective module is defined dually. Denote by $\mathcal{P}(F)$ $({
\rm respectively}, \mathcal{P}(A))$ the subcategory of $F$-projective (respectively, projective) modules in $A$-mod and by $\mathcal{I}(F)$ $({\rm respetively}, \mathcal{I}(A))$ the subcategory of $F$-injective (respectively, injective) modules in $A$-mod, we have $\mathcal{P}(F)=\mathcal{P}(A)\cup\tau^{-}\mathcal{I}(F)$ and $\mathcal{I}(F)=\mathcal{I}(A)\cup \tau\mathcal{P}(F)$ by \cite[Corollary 1.6]{AS1}. $F$ is said to have enough $F$-projectives (respectively, $F$-injectives) if for any $X\in A$-mod there exists an $F$-exact sequence $0 \rightarrow Y \rightarrow P \rightarrow X \rightarrow 0$ with $P\in \mathcal{P}(F)$ $({\rm respectively}, 0\rightarrow X\rightarrow I \rightarrow Y \rightarrow 0\ with \ I \in \mathcal{I}(F))$.

Let $M$ be an $A$-module. In this paper, we work with additive sub-bifunctors of ${\Ext}^{1}_{A}(-,-)$ of the form $F^{M}$ and $F_{M}$. For each pair of $A$-modules $X$ and $Z$, we define
\begin{multline*}
F^{M}(Z,X)=\{ 0\rightarrow X \rightarrow Y \rightarrow Z \rightarrow 0 \in {\Ext}^{1}_{A}(Z,X) \\
\mid {\Hom}_{A}(Y,N)\rightarrow {\Hom}_{A}(X,N)\rightarrow 0 \text{ is exact for all}\ N \in {\add}M \}.
\end{multline*}
$F_{M}$ is defined dually. It is shown in \cite{AS1} that $F^{M}$ and $F_{M}$ are additive sub-bifunctors of ${\Ext}^{1}_{A}(-,-)$ and they have enough $F$-projectives and $F$-injectives. Moreover, $\mathcal{P}(F_{M})={\add}(A\oplus M)$ and $\mathcal{I}(F^{M})={\add}(DA\oplus M)$.

For an $A$-module $X$, an exact sequence $\cdots \rightarrow P_{l} \xrightarrow{f_{l}} P_{l-1}\rightarrow \cdots \rightarrow P_{1} \xrightarrow{f_{1}} P_{0}\xrightarrow{f_{0}} X\rightarrow 0$ is called an $F$-projective resolution of $X$ if all $P_{i}\in \mathcal{P}(F)$ and all $0 \rightarrow \text{Im}f_{i+1} \rightarrow P_{i} \rightarrow \text{Im}f_{i} \rightarrow 0$ are $F$-exact. The $F$-projective dimension of $X$, denoted by pd$_{F}X$, is the smallest $n$ such that there exists an $F$-projective resolution $0 \rightarrow P_{n} \rightarrow \cdots \rightarrow P_{1} \rightarrow P_{0} \rightarrow X \rightarrow 0$. Dually, we can define the $F$-injective resolution and $F$-injective dimension of $X$. Similar to the extension group ${\Ext}^{i}_{A}(X,Y)$, by using the $F$-projective resolution of $X$ or the $F$-injective resolution of $Y$, we can define the $F$-relative extension group ${\Ext}^{i}_{F}(X,Y)$ and they share many basic properties. Then the $F$-global dimension of $A$ is defined as ${\gl}_{F}A=\text{sup}\{{\pd}_{F}X\mid X\in A\text{-mod}\}=\text{sup}\{{\id}_{F}Y|Y\in A\text{-mod}\}$.

For a subcategory $\mathcal{C}$ of $A$-mod, we write $^{\perp_{F}}\mathcal{C}=\{X\in A\text{-}{\mo}\mid{\Ext}^{i}_{F}(X,\mathcal{C})\\ =0 \,\text{for all} \ i>0\}$ and $\mathcal{C}^{\perp_{F}}=\{X \in A\text{-mod}\mid {\Ext}^{i}_{F}(\mathcal{C},X)=0 \ \text{for all} \ i>0\}$. If $F={\Ext}^{1}_{A}(-,-)$, we  denote $^{\perp_{F}}\mathcal{C}$ and $\mathcal{C}^{\perp_{F}}$ by $^{\perp}\mathcal{C}$ and $\mathcal{C}^{\perp}$ respectively. For an integer $n$, we write $^{\perp_{n}}\mathcal{C}=\{X \in A\text{-mod}\mid {\Ext}^{i}_{A}(X,\mathcal{C})=0 \ \text{for} \ 1\leq  i\leq n\}$ and $\mathcal{C}^{\perp_{n}}=\{X \in A\text{-mod}\mid {\Ext}^{i}_{A}(\mathcal{C},X)=0 \ \text{for } \ 1\leq i\leq n\}$.

\subsection{Relative cotilting theory}

Let $F$ be an additive sub-bifunctor of ${\Ext}^{1}_{A}(-,-)$. We recall the definition of $F$-cotilting modules.

\begin{definition}An $A$-module $T$ is called $F$-cotilting if it satisfies

$(1)$ ${\id}_{F}T<\infty$,

$(2)$ ${\Ext}^{i}_{F}(T,T)=0$ for $i>0$,

$(3)$ There exists an $F$-exact sequence $0\rightarrow T_{l}\rightarrow \cdots \rightarrow T_{1} \rightarrow T_{0} \rightarrow I \rightarrow 0$ with $T_{i}\in {\add}\,T$ for any $I$ in $\mathcal{I}(F)$.
\end{definition}

We need the following results on $F$-cotilting modules later.

\begin{thm}{\rm (\cite{AS2})} Let $T$ be an $F$-cotilting $A$-module and $\Lambda={\End}_{A}T$, then we have the following.

$(1)$ $A\cong{\End}_{\Lambda}T$.

$(2)$ The module $U={\Hom}_{A}(\mathcal{P}(F),T)$ is a cotilting $\Lambda$-module with ${\id}_{F}T\leq {\id}_{\Lambda}U\leq{\id}_{F}T+2$.

$(3)$ ${\Hom}_{A}(-,T):A\text{-}{\rm mod}\rightarrow \Lambda\text{-}{\rm mod}$ induces a duality between $^{\perp_{F}}T$ and $^{\perp}U$ and ${\Ext}^{i}_{F}(X,Y)\cong {\Ext}^{i}_{\Lambda}({\Hom}_{A}(Y,T),{\Hom}_{A}(X,T))$ for all modules $X$ and $Y$ in $^{\perp_{F}}T$ and $i\geq 0$.
\end{thm}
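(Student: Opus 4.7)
The result is the relative-cotilting version of the classical Brenner--Butler/Miyashita tilting theorem, so my strategy is to mimic the standard cotilting proof with ordinary exact sequences replaced by $F$-exact ones and ordinary (co)resolutions replaced by $F$-projective/$F$-injective ones. Throughout, the key bimodule is $_{\Lambda}T_{A}$ and the key categorical input from \cite[Corollary~1.6]{AS1} is $\mathcal{P}(F)=\mathcal{P}(A)\cup\tau^{-}\mathcal{I}(F)$ and $\mathcal{I}(F)=\mathcal{I}(A)\cup\tau\mathcal{P}(F)$.

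For~(1), I would exploit the cogenerator $DA\in\mathcal{I}(A)\subseteq\mathcal{I}(F)$. Axiom~(3) of Definition~2.1 supplies an $F$-exact sequence $0\to T_{l}\to\cdots\to T_{0}\to DA\to 0$ with $T_{i}\in\add T$. Applying $\Hom_{A}(T,-)$ and using $\Ext^{i}_{F}(T,T_{j})=0$ from condition~(2), the resulting complex of left $\Lambda$-modules is exact; it presents $\Hom_{A}(T,DA)\cong DT$ in terms of $\add\Lambda$. Comparing this presentation of $DT$ with the one forced by the natural ring map $A\to\End_{\Lambda}T$ via a diagram chase identifies that map as an isomorphism.

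For~(2), the $(\Lambda,A)$-bimodule structure makes $U=\Hom_{A}(\mathcal{P}(F),T)$ a $\Lambda$-module containing $\Lambda=\Hom_{A}(T,T)$ as a summand, plus summands coming from the $\tau^{-}\mathcal{I}(F)$ piece of $\mathcal{P}(F)$. The vanishing $\Ext^{i}_{\Lambda}(U,U)=0$ will be a consequence of the Ext-isomorphism in~(3) specialised to pairs of modules in $\mathcal{P}(F)\subseteq{}^{\perp_{F}}T$, and the coresolution of $D\Lambda$ required by the cotilting definition can be built from the coresolution supplied by Definition~2.1(3) applied term-by-term. For the dimension estimate, take an $F$-injective coresolution $0\to T\to T^{0}\to\cdots\to T^{k}\to 0$ of length $k=\id_{F}T$, replace each $T^{j}$ by its $F$-projective resolution, and apply $\Hom_{A}(-,T)$; splicing yields an injective coresolution of $U$ over $\Lambda$ whose length exceeds $k$ by at most two.

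For~(3), given $X,Y\in{}^{\perp_{F}}T$, I would take an $F$-projective resolution $P_{\bullet}\to X$. Applying $\Hom_{A}(-,T)$ produces an exact coresolution of $\Hom_{A}(X,T)$ by modules in $\add U$ (exactness uses $\Ext^{i}_{F}(X,T)=0$). Applying $\Hom_{\Lambda}(-,\Hom_{A}(Y,T))$, and invoking the adjunction-type identity $\Hom_{\Lambda}(\Hom_{A}(P,T),\Hom_{A}(Y,T))\cong\Hom_{A}(Y,P)$ for $P\in\mathcal{P}(F)$ (which reduces to the case $P=T$ handled in~(1)), the resulting complex is $\Hom_{A}(Y,P_{\bullet})$. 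Its $i$-th cohomology simultaneously computes $\Ext^{i}_{F}(X,Y)$ and $\Ext^{i}_{\Lambda}(\Hom_{A}(Y,T),\Hom_{A}(X,T))$. The case $i=0$ gives full faithfulness and hence the duality, while the image-lies-in-${}^{\perp}U$ half comes from the Ext-isomorphism applied with $X$ replaced by an arbitrary $P\in\mathcal{P}(F)$.

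\textbf{Main obstacle.} The subtle step is the injective dimension bound $\id_{\Lambda}U\leq\id_{F}T+2$ in~(2): the $+2$ is invisible at the $F$-relative level and only appears when $F$-relative (co)resolutions are converted into absolute ones through the Auslander--Reiten correction terms $\tau\mathcal{P}(F)$ and $\tau^{-}\mathcal{I}(F)$. Tracking exactly how these correction syzygies contribute, and checking that they contribute at most two extra injective dimension steps rather than more, is where the argument demands the most bookkeeping.
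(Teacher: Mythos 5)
The paper does not prove this theorem: the displayed ``proof'' is nothing but a pointer to Auslander--Solberg's original work, namely \cite[Corollary 3.4]{AS2} for part (1), \cite[Theorem 3.13(d)]{AS2} for part (2), and \cite[Corollary 3.6, Proposition 3.7]{AS2} for part (3). So your sketch is necessarily a reconstruction of arguments the paper never gives, and the only meaningful comparison is with the Auslander--Solberg originals that it outsources to.

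Against that benchmark your overall plan — replace exact sequences by $F$-exact ones, transport via $\Hom_A(-,T)$, read off the Ext-comparison from an $F$-projective resolution — is the right spirit, but there are concrete gaps. First, in (2) you claim that $U=\Hom_A(\mathcal{P}(F),T)$ contains $\Lambda=\Hom_A(T,T)$ as a summand; that would require $T\in\mathcal{P}(F)$, and an $F$-cotilting module is $F$-rigid with finite $F$-injective dimension but is not generally $F$-projective, so $T\notin\mathcal{P}(F)=\mathcal{P}(A)\cup\tau^{-}\mathcal{I}(F)$ in general. What you actually get, since $A\in\mathcal{P}(A)\subseteq\mathcal{P}(F)$, is the summand $\Hom_A(A,T)\cong{}_{\Lambda}T$; and the cotilting axioms for $U$ need $D\Lambda$ coresolved by $\add\,U$, not $\Lambda\in\add\,U$, so your picture of $U$ is off even though the misstatement is not fatal by itself. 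Second, there is an unaddressed circularity: you verify rigidity of $U$ in (2) by invoking the Ext-isomorphism of (3), but your argument for (3) requires that the $\add\,U$-coresolution of $\Hom_A(X,T)$ be $\Hom_\Lambda(\Hom_A(Y,T),-)$-acyclic, which already presupposes $\Hom_A(Y,T)\in{}^{\perp}U$ --- i.e.\ part of (3). Auslander--Solberg's actual development has to break this chain (typically by first establishing the dimension-shift/acyclicity lemma, then the Ext-comparison, then cotilting of $U$), and your sketch would need to spell out that ordering. Your diagnosis that the $+2$ in the bound $\id_F T\leq\id_\Lambda U\leq\id_F T+2$ is where the real bookkeeping happens is accurate; that is exactly the content of \cite[Theorem 3.13]{AS2}.
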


\begin{proof}
(1) is \cite[Corollary 3.4]{AS2} and (2) is \cite[Theorem 3.13(d)]{AS2}. (3) follows from \cite[Corollary 3.6, Proposition 3.7]{AS2}.
\end{proof}

Let $X$ be an $A$-module and $\Gamma={\End}_{A}X$. For an $A$-module $Y$, we say $X$ dualizes $Y$ if the natural homomorphism $Y \rightarrow {\Hom}_{\Gamma}({\Hom}_{A}(Y,X),X)$ is an isomorphism. This is equivalent to that there exists an exact sequence $0 \rightarrow Y \xrightarrow{f} X^{0} \rightarrow X^{1}$ where $f$ is a left add$X$-approximation of $Y$ and $X^{0},X^{1} \in {\add}X$ by \cite[Proposition 2.1]{AS3}. If $X$ is a direct summand of $Y$, we call $X$ is a dualizing summand of $Y$.

Given a dualizing summand of a cotilting module, we can get a relative cotilting module.
\begin{thm}{\rm \cite[Theorem 2.10]{IS}}Let $\Lambda$ be an Artin algebra and $U$ be a cotilting $\Lambda$-module. For a dualizing summand $T$ of $U$, let $A={\End}_{\Lambda}T$, $X={\Hom}_{\Lambda}(U,T)$ and $F=F_{X}$ be an additive sub-bifunctor of ${\Ext}^{1}_{A}(-,-)$. Then the following hold.

$(1)$ $\Lambda\cong{\End}_{A}T$.

$(2)$ $T$ is an $F$-cotilting $A$-module with ${\id}_{F}T\leq {\rm max}\{{\id}_{\Lambda}U, \ 2\}$.

$(3)$ If $T$ is an injective $\Lambda$-module, then ${\id}_{F}T\leq {\rm max}\{{\id}_{\Lambda}U-2, \ 0\}$.
\end{thm}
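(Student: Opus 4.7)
The plan is to exploit the interplay between the dualizing hypothesis on $T$ and the cotilting hypothesis on $U$, leveraging the characterization in Proposition~2.1 of \cite{AS3} quoted in the preliminaries: a module $X$ dualizes $Y$ if and only if there is an exact sequence $0\to Y\to X^0\to X^1$ with $X^0,X^1\in\add X$ whose first map is a left $\add X$-approximation. The whole argument can be read as a converse of Theorem~2.2 and organized as a reverse-engineering of its conclusions.

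For (1), I would apply $\Hom_\Lambda(-,T)$ to a finite $\add U$-coresolution $0\to\Lambda\to U^0\to\cdots\to U^d\to 0$ coming from $U$ being cotilting. Because $T\in\add U$ and $\Ext^i_\Lambda(U,U)=0$ for $i>0$, this produces an exact sequence in $A$-mod of the form $0\to\Hom_\Lambda(U^d,T)\to\cdots\to\Hom_\Lambda(U^0,T)\to T\to 0$ whose terms all lie in $\add X$. Applying $\Hom_A(-,T)$ and using the dualizing identity $\Hom_A(\Hom_\Lambda(U,T),T)\cong U$ yields a left exact sequence $0\to\End_A T\to U^0\to U^1$. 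Comparing with the exactness of the original coresolution $0\to\Lambda\to U^0\to U^1$, together with the naturality of the evaluation map $\Lambda\to\End_A T$, forces $\Lambda\cong\End_A T$.

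For (2), I would verify the three conditions of Definition~2.1 for $T$ with $F=F_X$. The orthogonality $\Ext^i_F(T,T)=0$ is the technical heart: take an $F$-injective coresolution of $T$ with terms in $\mathcal{I}(F_X)=\add(DA\oplus\tau X)$, apply $\Hom_A(T,-)$, and transfer the needed vanishing back to $A$-mod via the dualizing identifications together with $\Ext^i_\Lambda(U,U)=0$. For the $F$-exact $\add T$-resolution of each $F$-injective: resolve $DA$ by $D$-dualizing the sequence produced in (1), and resolve $\tau P$ for $P\in\add X$ by pairing an injective resolution of $P$ with the Auslander--Reiten formula. The bound $\id_F T\leq\max\{\id_\Lambda U,2\}$ then drops out of length tracking in the inductive construction.

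Part (3) is a refinement: when $T$ is $\Lambda$-injective, every $U^i$ already sits in $\add T$, the dualizing sequences become trivial, and two steps can be trimmed off the $F$-injective resolution of $T$, giving $\id_F T\leq\max\{\id_\Lambda U-2,0\}$. The main obstacle I anticipate is the self-orthogonality step of (2): pushing self-orthogonality of $U$ in $\Lambda$-mod through $\Hom_A(-,T)$ to $F_X$-self-orthogonality of $T$ in $A$-mod requires the dualizing adjunction to lift to higher $\Ext$ groups, which is exactly where the full strength of the relative cotilting machinery of \cite{AS2} must be invoked.
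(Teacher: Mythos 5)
The statement you are proving is not actually proved in the paper: it is cited verbatim as Theorem~2.10 of Iyama--Solberg \cite{IS}, so there is no in-paper argument to compare with. Evaluating your proposal on its own terms, I find part~(1) essentially sound but parts~(2) and~(3) too sketchy to count as a proof, with one step that looks circular.

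For (1), the outline is fine once you observe that you only need the first two steps of a left $\add U$-coresolution of $\Lambda$. These come from the standard fact that for cotilting $U$ the subcategory $^{\perp}U$ (which contains $\Lambda$) has enough $\Ext$-injectives from $\add U$: there is an exact sequence $0\to\Lambda\to U^{0}\to\Lambda'\to 0$ with $U^{0}\in\add U$, $\Lambda'\in{}^{\perp}U$, and iterating once gives $0\to\Lambda\to U^{0}\to U^{1}$. Note this is \emph{not} one of the cotilting axioms as recalled in the paper (the axioms give $\add U$-\emph{resolutions} of injectives, not coresolutions of $\Lambda$), so it deserves a citation. With this in hand, since $\Lambda',\Lambda''\in{}^{\perp}U$ and $T\in\add U$, applying $\Hom_{\Lambda}(-,T)$ keeps exactness, and since $T$ dualizes all of $\add U$, the naturality-of-evaluation comparison you describe gives $\Lambda\cong\End_{A}T$. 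Good.

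For (2) the gaps are real. First, you never explain how to produce an $F$-exact $F$-injective coresolution of $T$; the $F$-exactness of the successive short exact sequences is itself one of the things that must be verified, and in Auslander--Solberg's framework it is tied up with the $\add X$-approximation property of the maps, not something one gets for free. Second, your mechanism for $\Ext^{i}_{F}(T,T)=0$ is unspecified; the natural route would be the isomorphism $\Ext^{i}_{F}(X,Y)\cong\Ext^{i}_{\Lambda}(\Hom_{A}(Y,T),\Hom_{A}(X,T))$ of Theorem~2.2(3), specialized to $X=Y=T$ to get $\Ext^{i}_{F}(T,T)\cong\Ext^{i}_{\Lambda}(\Lambda,\Lambda)=0$ --- but Theorem~2.2 already \emph{assumes} $T$ is $F$-cotilting, so invoking it here is circular. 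A non-circular argument has to build the duality from scratch using only that $U$ is cotilting and $T$ dualizes $U$. Third, the sketch for producing a finite $F$-exact $\add T$-resolution of each $F$-injective (resolve $\tau P$ "by pairing an injective resolution of $P$ with the Auslander--Reiten formula") is not an argument; the AR formula controls $\Ext^{1}$ and $\tau$, not $\add T$-resolutions. Finally, the bounds $\id_{F}T\le\max\{\id_{\Lambda}U,2\}$ and, in (3), $\max\{\id_{\Lambda}U-2,0\}$ encode a genuine shift by~$2$ that mirrors the inequality $\id_{F}T\le\id_{\Lambda}U\le\id_{F}T+2$ in Theorem~2.2(2); "length tracking in the inductive construction" does not locate where those two units come from. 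In (3) the assertion that injectivity of $T$ forces every $U^{i}$ to lie in $\add T$ is also unjustified: $T$ being injective does not make $\add T$ contain the injective hull of $\Lambda$ or the higher cosyzygies. So while the general philosophy (transfer cotilting data along $\Hom_{\Lambda}(-,T)$ using the dualizing hypothesis) is the right one, the proposal as written does not establish (2) or (3).
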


$F$-Gorenstein algebras were introduced by Auslander and Solberg in \cite{AS} to characterize Gorenstein endomorphism algebras of $F$-cotilting modules. Let $\mathcal{P}^{\infty}(F)=\{X\in A\text{-}{\mo}|\,{\pd}_{F}X<\infty\}$ and $\mathcal{I}^{\infty}(F)=\{Y\in A\text{-}{\mo}| \,{\id}_{F}Y<\infty\}$. Recall that an Artin algebra $A$ is called $F$-Gorenstein if $\mathcal{P}^{\infty}(F)=\mathcal{I}^{\infty}(F)$.

\begin{prop}{\rm \cite[Proposition 3.6]{AS}}Let $T$ be an $F$-cotilting $A$-module and $\Lambda={\End}_{A}T$. Then the following are equivalent.

$(1)$ $\Lambda$ is Gorenstein.

$(2)$ $T$ is an $F$-tilting $A$-module.

$(3)$ $A$ is $F$-Gorenstein.
\end{prop}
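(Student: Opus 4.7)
The plan is to prove $(2) \Leftrightarrow (3)$ directly by a symmetric dimension-shifting argument, and then to deduce $(1) \Leftrightarrow (2)$ by transferring the classical Gorenstein/tilting-cotilting dichotomy from $\Lambda$ to $A$ via the correspondence in Theorem 2.2. First I would recall, dually to Definition 2.1, that $T$ is $F$-tilting when (T1) ${\pd}_F T < \infty$, (T2) ${\Ext}^i_F(T,T)=0$ for $i>0$, and (T3) for every $P \in \mathcal{P}(F)$ there is an $F$-exact sequence $0\rightarrow P\rightarrow T^0\rightarrow\cdots\rightarrow T^l\rightarrow 0$ with $T^j \in {\add}\,T$. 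Since $T$ is $F$-cotilting, (T2) is already in hand.

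For $(2)\Rightarrow(3)$, I would argue symmetrically: any $I \in \mathcal{I}(F)$ admits the finite $\add T$-resolution from Definition 2.1(3), and ${\pd}_F T < \infty$ then forces ${\pd}_F I < \infty$, giving $\mathcal{I}^\infty(F) \subseteq \mathcal{P}^\infty(F)$; dually, (T3) together with ${\id}_F T < \infty$ yields $\mathcal{P}^\infty(F) \subseteq \mathcal{I}^\infty(F)$, whence $A$ is $F$-Gorenstein. For $(3)\Rightarrow(2)$, (T1) is immediate since $T \in \mathcal{I}^\infty(F) = \mathcal{P}^\infty(F)$. For (T3), I would take $P \in \mathcal{P}(F)$, use $F$-Gorenstein to pick a finite $F$-injective coresolution $0\rightarrow P\rightarrow I^0\rightarrow\cdots\rightarrow I^n\rightarrow 0$, and resolve each $I^j$ in $\add T$ via the cotilting condition; a horseshoe/total-complex argument inside the relative exact structure $F$ then produces the required $\add T$-coresolution of $P$, whose length is controlled by ${\pd}_F T < \infty$.

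For $(1) \Leftrightarrow (2)$ I would invoke Theorem 2.2(2) to view $U = {\Hom}_A(\mathcal{P}(F), T)$ as a cotilting $\Lambda$-module with ${\id}_F T \le {\id}_\Lambda U \le {\id}_F T + 2$. Since $T$ is $F$-cotilting both quantities are finite, so $\Lambda$ being Gorenstein is equivalent to $U$ being additionally tilting, that is, to ${\pd}_\Lambda U < \infty$. Using the $\Ext$-isomorphism ${\Ext}^i_F(X,Y) \cong {\Ext}^i_\Lambda({\Hom}_A(Y,T), {\Hom}_A(X,T))$ for $X, Y \in {}^{\perp_F} T$ from Theorem 2.2(3), specialised to $X = T$ so that ${\Hom}_A(T,T) = \Lambda$, I would transfer the vanishing of ${\Ext}^i_F(T, -)$ on ${}^{\perp_F} T$ into the vanishing of ${\Ext}^i_\Lambda(-, \Lambda)$ on ${}^\perp U$, thereby matching ${\pd}_F T < \infty$ with ${\pd}_\Lambda U < \infty$. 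Combined with (T2) and the already-established $(2) \Leftrightarrow (3)$, this closes the loop.

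The main obstacle I expect is the splicing step in $(3)\Rightarrow(2)$. A horseshoe-style argument in a relative exact structure is not automatic: one must keep the middle terms in $\add T$ and the connecting short exact sequences $F$-exact at each splice, and one must simultaneously bound the total length of the resulting coresolution by ${\pd}_F T$. The remaining pieces are essentially routine translations of classical tilting/cotilting theory into the relative setting.
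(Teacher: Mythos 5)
The paper does not prove this proposition; it is quoted from \cite[Proposition 3.6]{AS} without argument, so your reconstruction must stand on its own.

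Your $(2)\Rightarrow(3)$ is correct: the finite ${\add}\,T$-resolutions of $F$-injectives, together with ${\pd}_F T<\infty$, bound each $F$-injective's $F$-projective dimension, and a dimension shift along any finite $F$-injective coresolution gives $\mathcal{I}^\infty(F)\subseteq\mathcal{P}^\infty(F)$; the dual argument gives the reverse inclusion. Likewise $(3)\Rightarrow$(T1) is immediate.

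The splicing step you flag in $(3)\Rightarrow$(T3) is a genuine gap. The ${\add}\,T$-resolutions supplied by the $F$-cotilting axiom run to the \emph{left} of each $I^j$, while the $F$-injective coresolution of $P$ runs to the \emph{right}: a Cartan--Eilenberg or horseshoe totalization of the resulting bicomplex yields a complex whose homology is $P$ sitting in an \emph{internal} degree, so $P$ appears only as a subquotient, not as the leftmost term of a one-sided $F$-exact coresolution $0\to P\to T^0\to\cdots\to T^l\to 0$ as (T3) demands. The usual route is instead to pass through $\Lambda$: under the duality ${\Hom}_A(-,T)$ of Theorem 2.2, ${\add}\,T$ corresponds to ${\add}\,\Lambda$ and $\mathcal{P}(F)$ maps into ${\add}\,U$, so an ${\add}\,T$-coresolution of $P$ of length $l$ is the same datum as a projective $\Lambda$-resolution of ${\Hom}_A(P,T)$ of length $l$; hence (T3) amounts to ${\pd}_\Lambda U<\infty$, which follows from $\Lambda$ being Gorenstein and $U$ cotilting. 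The natural logical order is therefore $(3)\Rightarrow(1)\Rightarrow(2)$, not a direct $(3)\Rightarrow(2)$. Relatedly, in your $(1)\Leftrightarrow(2)$ step the $\Ext$-isomorphism of Theorem 2.2(3) is stated only for arguments in ${}^{\perp_F}T$ and ${}^{\perp}U$, so inferring ${\pd}_\Lambda U<\infty$ from ${\pd}_F T<\infty$ requires a check that the successive (co)syzygies stay inside those perpendicular categories; this holds but must be argued.
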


\section{Algebras with finite relative dominant dimension}

Let $\Lambda$ be an Artin algebra and $I$ be an injective $\Lambda$-module whose projective dimension is at most $m$. In this section, we characterize the algebra $\Lambda$ with $I$-domdim\,$\Lambda\geq n+1$.

First we give a description of the injective $\Lambda$-modules whose projective dimensions are at most $m$.

\begin{prop} Let $0 \rightarrow \Lambda \rightarrow I_{0} \rightarrow I_{1} \rightarrow \cdots \rightarrow I_{n} \rightarrow I_{n+1} \rightarrow \cdots$ be a minimal injective resolution of $\Lambda$ and $I$-{\dom}\,$\Lambda\geq n+1$. If $m\leq n$, then ${\add}\,I$ consists of all the injective $\Lambda$-modules whose projective dimensions are at most $m$.
\end{prop}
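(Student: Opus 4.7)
The containment $\add I \subseteq \{J : J \text{ injective with } \pd_\Lambda J \leq m\}$ is immediate, since direct summands of the injective module $I$ inherit injectivity and have projective dimension bounded by $\pd_\Lambda I \leq m$.

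For the reverse inclusion, let $J$ be an indecomposable injective $\Lambda$-module with $l := \pd_\Lambda J \leq m$, and write $J = I(S)$ for its simple socle $S$. Since $I_0, \ldots, I_n \in \add I$, to place $J$ in $\add I$ it suffices to show that $J$ occurs as a direct summand of some $I_j$ with $0 \leq j \leq n$, equivalently that $\Ext^j_\Lambda(S, \Lambda) \neq 0$ for some such $j$ (via the standard identification coming from $\Hom_\Lambda(S, -)$ applied to the minimal injective resolution of $\Lambda$). The plan is to argue by contradiction: assuming $\Ext^j_\Lambda(S, \Lambda) = 0$ for all $0 \leq j \leq n$, apply the long exact sequence of $\Ext^*_\Lambda(-, \Lambda)$ to the short exact sequence $0 \to S \to J \to J/S \to 0$ to obtain $\Ext^j_\Lambda(J/S, \Lambda) \cong \Ext^j_\Lambda(J, \Lambda)$ for $0 \leq j \leq n$; combined with $\Ext^{n+1}_\Lambda(J, \Lambda) = 0$ (from $\pd_\Lambda J \leq m \leq n$) and iterating the dimension-shift down a composition series of $J$, one aims for a contradiction with $l \leq n$.

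The main obstacle is that the iteration requires Ext-vanishing for every simple composition factor of $J$ whose injective envelope lies outside $\add I$, which demands careful bookkeeping of which indecomposable injectives appear among $I_0, \ldots, I_n$. A cleaner alternative is to apply the contravariant exact functor $\Hom_\Lambda(-, I)$ to a minimal projective resolution $0 \to P_l \to \cdots \to P_0 \to J \to 0$, producing an exact $A$-module sequence whose non-terminal terms lie in $\add M$ (with $A := \End_\Lambda I$ and $M := \Hom_\Lambda(\Lambda, I)$): this exhibits $\Hom_\Lambda(J, I)$ with an $\add M$-coresolution of length $l \leq m \leq n$, and invoking the cotilting-style duality of Theorem 2.2 together with the reflexivity $\Hom_A(\Hom_\Lambda(P, I), M) \cong P$ for projective $\Lambda$-modules $P$ should recover $J$ and force $\Hom_\Lambda(J, I) \in \add M$, whence $J \in \add I$ via the inverse correspondence.
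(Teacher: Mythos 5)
Your easy inclusion $\add\,I\subseteq\{J:\text{injective},\ {\pd}_{\Lambda}J\leq m\}$ is fine, and your instinct to track $\Ext^{j}_{\Lambda}(S,\Lambda)$ via the socle is exactly the right kind of data (for a minimal injective resolution, $E(S)\mid I_{j}$ iff $\Ext^{j}_{\Lambda}(S,\Lambda)\neq0$). The paper, however, does not reprove this reverse inclusion: it reduces it in one line to a theorem of Miyachi (his Theorem~2.2 in~\cite{JM}), which asserts precisely that an injective $\Lambda$-module $J$ with $\pd_{\Lambda}J\leq m$ satisfies $J\in\add(\bigoplus_{j=0}^{m}I_{j})$, and then uses $m\leq n$ and $I_{0},\dots,I_{n}\in\add\,I$. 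Your proposal never supplies this ingredient, and neither of your two sketches fills the gap.

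Your first sketch is, as you yourself flag, incomplete: the dimension shift along a composition series of $J$ requires Ext-vanishing for composition factors whose injective envelopes you have no control over, and the argument does not close.

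Your second sketch has a more basic problem: the target is wrong. Under the contravariant functor $\Hom_{\Lambda}(-,I):\Lambda\text{-mod}\to A\text{-mod}$ one has $\Hom_{\Lambda}(\Lambda,I)=M$ and $\Hom_{\Lambda}(I,I)=A$, so $\add\,\Lambda$ is carried to $\add\,M$ and $\add\,I$ to $\add\,A$. Thus $\Hom_{\Lambda}(J,I)\in\add\,M$ would say $J$ is \emph{projective} (hence projective--injective), not merely $J\in\add\,I$; when $\pd_{\Lambda}J\geq1$ this is simply false, so the conclusion you are trying to ``force'' cannot hold. What you actually need is $\Hom_{\Lambda}(J,I)\in\add\,A$, i.e.\ that $\Hom_{\Lambda}(J,I)$ is a projective $A$-module. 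But the existence of an $\add\,M$-coresolution of length $l\leq m$ does not yield this either: taking $J=\Lambda$ itself (formally, the trivial resolution) produces $M=\Hom_{\Lambda}(\Lambda,I)$ with an $\add\,M$-coresolution of length $0$, and $M$ is not projective over $A$ in general. So nothing in the sketch forces projectivity of $\Hom_{\Lambda}(J,I)$, and the argument does not recover $J\in\add\,I$. In short, the key step --- Miyachi's theorem or an equivalent argument bounding in which terms $I_{j}$ of a minimal injective resolution an injective of small projective dimension can appear --- is missing from the proposal.
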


\begin{proof}
Since $I$-domdim\,$\Lambda\geq n+1$, we have $\oplus^{n}_{j=0}I_{j}\in {\add}\,I$. For an injective $\Lambda$-module $J$ with ${\pd}\,J\leq m$, $J\in {\add}(\oplus^{m}_{j=0}I_{j})$ holds according to \cite[Theorem 2.2]{JM}. Note that $m\leq n$ and ${\pd}\,I\leqslant m$, we know that ${\add}\,I$ consists of all the injective $\Lambda$-modules whose projective dimensions are at most $m$.
\end{proof}

\begin{remark} \rm When $m>n$, {\add}\,$I$ does not always contain all the injective modules with projective dimension at most $m$. Let $\Gamma$ be the algebra given by the quiver $1 \xrightarrow{\alpha} 2 \xrightarrow{\beta} 3 \xrightarrow{\gamma}4\leftarrow 5\leftarrow 6$ and the relation $\gamma\beta\alpha=0$. Take $I=\oplus^{6}_{j=2}I(j)$. It is easy to check that ${\pd}\,I=2$ and $I$-domdim\,$\Gamma=2$. However, ${\pd}\,I(1)=2$ and $I(1)\notin {\add}\,I$.
\end{remark}

Let $A$ be an Artin algebra and $M$ be a cogenerator over $A$. For an $A$-module $X$, an exact sequence $0 \rightarrow X \xrightarrow{f_{0}} M_{0}\xrightarrow{f_{1}} M_{1} \xrightarrow{f_{2}} M_{2}\xrightarrow{f_{3}}\cdots $ with $M_{i}\in {\add}\,M$ is called an $M$-coresolution of $X$. Moreover, an $M$-coresolution of $X$ is called minimal if $f_{i}$ is a minimal left ${\add}\,M$-approximation for all $i\geqslant 0$. The $M$-coresolution of $X$ always exists since $M$ is a cogenerator. The $M$-coresolution dimension of $X$, denoted by $M$-codim\,$X$, is the number $l$ such that there exists a minimal $M$-coresolution $0 \rightarrow X \rightarrow M_{0}\rightarrow \cdots \rightarrow M_{l-1}\rightarrow M_{l}\rightarrow 0$.

In \cite{AS}, Auslander and Solberg proved that the endomorphism algebra of a generator-cogenerator has dominant dimension at least 2. Now we show how to construct algebras with finite relative dominant dimension.

\begin{thm}
Let $A$ be an Artin algebra, $M$ be an $A$-module and $\Lambda={\End}_{A}\,M$. If $M$ satisfies $(i)$ $DA\in {\add}\,M$, $(ii)$ ${\Ext}^{i}_{A}(M,M)=0$ for $1\leqslant i \leqslant n-1$ $(n\geq 1)$ and $(iii)$ $M$-{\rm codim}$\,A\leqslant m$ $(m\geqslant 0)$, we have the following.

$(1)$ $_{\Lambda}I={\Hom}_{A}(A,M)$ is an injective $\Lambda$-module with ${\pd}_{\Lambda}\,I\leqslant m$.

$(2)$ $A\cong {\End}_{\Lambda}I$ and $I\text{-}{\dom}\,\Lambda\geq n+1$.
\end{thm}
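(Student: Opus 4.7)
The strategy is to transport resolutions from the $A$-side to the $\Lambda$-side via the contravariant functor $F=\Hom_{A}(-,M)\colon A\text{-}\mo\to\Lambda\text{-}\mo$. Under $F$ one has $F(M)=\Lambda$ and $F(A)=\Hom_{A}(A,M)\cong M=I$ as left $\Lambda$-modules, and more generally $F$ sends $\add M$ into the projective $\Lambda$-modules and $\add A$ into $\add I$.

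For part $(1)$, I would prove injectivity of $_{\Lambda}I$ using hypothesis $(i)$ alone. Since $DA\in\add M$, $DA$ is a direct summand of $M^{k}$ for some $k$, and applying $\Hom_{A}(M,-)$ makes $\Hom_{A}(M,DA)$ a direct summand of $\Hom_{A}(M,M^{k})=\Lambda^{k}$ as a right $\Lambda$-module; combined with the standard identification $\Hom_{A}(M,DA)\cong DM$, this shows $DM$ is a projective $\Lambda^{\op}$-module, hence by Matlis duality the left $\Lambda$-module $M\cong D(DM)$ is injective. For the bound $\pd_{\Lambda}I\leq m$, apply $F$ to the minimal $M$-coresolution $0\to A\to M_{0}\to\cdots\to M_{m}\to 0$ provided by $(iii)$: the left $\add M$-approximation property at each step forces $F$ to preserve exactness, yielding the projective resolution $0\to\Hom_{A}(M_{m},M)\to\cdots\to\Hom_{A}(M_{0},M)\to I\to 0$ of length at most $m$ in $\Lambda\text{-}\mo$.

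For part $(2)$, the isomorphism $A\cong\End_{\Lambda}I=\End_{\Lambda}M$ follows from $(iii)$ via the dualizing-summand criterion recalled before Theorem 2.4: the initial piece $0\to A\to M_{0}\to M_{1}$ of the $M$-coresolution, whose first map is a left $\add M$-approximation, shows that $M$ dualizes $A$, which is precisely the statement that the natural map $A\to\Hom_{\Lambda}(\Hom_{A}(A,M),M)=\End_{\Lambda}M$ is an isomorphism. For $I\text{-}\dom\Lambda\geq n+1$, I would take a minimal projective resolution $\cdots\to P_{n}\to\cdots\to P_{0}\to M\to 0$ in $A\text{-}\mo$ and apply $F$ to obtain the complex $0\to\Lambda\to J_{0}\to J_{1}\to\cdots$ with $J_{i}=\Hom_{A}(P_{i},M)\in\add I$. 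Its $i$-th cohomology for $i\geq 1$ equals $\Ext^{i}_{A}(M,M)$, which vanishes for $1\leq i\leq n-1$ by $(ii)$; hence this complex is exact at $\Lambda,J_{0},\ldots,J_{n-1}$. Setting $Z=\mathrm{Im}(J_{n-1}\to J_{n})$, one gets the exact sequence $0\to\Lambda\to J_{0}\to\cdots\to J_{n}\to J_{n}/Z\to 0$ whose injective terms $J_{0},\ldots,J_{n}$ all lie in $\add I$.

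The main obstacle is this last deduction: converting the (non-minimal) injective coresolution just constructed into a statement about the minimal injective resolution of $\Lambda$. It is handled by the general fact that the minimal injective resolution appears as a direct summand of any injective coresolution, so each minimal term $I^{i}$ ($0\leq i\leq n$) is a direct summand of $J_{i}\in\add I$ and therefore still lies in $\add I$. A secondary care-point is the bookkeeping of the $A$-$\Lambda$-bimodule structure on $M$, so that ``projective right $\Lambda$-module'' correctly translates to ``injective left $\Lambda$-module'' in the injectivity argument of part $(1)$.
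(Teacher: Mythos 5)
Your proposal is correct, and the overall strategy --- apply $\Hom_{A}(-,M)$ to the minimal $M$-coresolution of $A$ for the bound ${\pd}_{\Lambda}I\leq m$, and to a minimal projective resolution of $M$ for $I$-${\dom}\,\Lambda\geq n+1$ --- is the same as the paper's. Your argument for injectivity of $_{\Lambda}I$ is also the paper's argument: both boil down to $I\cong D\Hom_{A}(M,DA)$ with $\Hom_{A}(M,DA)$ a projective right $\Lambda$-module. The extra care you take in justifying exactness (the left ${\add}\,M$-approximation property at each step of the $M$-coresolution, and the minimal injective resolution being a summand of your constructed one) is correct and is left implicit in the paper.

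The one place where you genuinely diverge is the double-centralizer isomorphism $A\cong\End_{\Lambda}I$. The paper obtains this by first observing that $\mathcal{I}(F^{M})={\add}\,M$, which makes $M$ an $F^{M}$-cotilting module (with ${\id}_{F^{M}}M=0$), and then invoking the relative cotilting theorem of Auslander--Solberg (cited as Theorem 2.2(1)). You instead use the dualizing-summand criterion from Auslander--Solberg III (recalled in the paper just before Theorem 2.3): the initial segment $0\to A\to M_{0}\to M_{1}$ of the $M$-coresolution, whose first arrow is a left ${\add}\,M$-approximation, certifies directly that $M$ dualizes $A$, i.e.\ that the evaluation map $A\to\Hom_{\Lambda}(\Hom_{A}(A,M),M)\cong\End_{\Lambda}I$ is an isomorphism. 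This is a more elementary route that bypasses the $F$-cotilting machinery entirely, and it is in fact the same mechanism the paper itself uses in the opposite direction (Theorem 3.6, where $I_{0},I_{1}\in{\add}\,I$ is used to conclude that $I$ dualizes $\Lambda$). Both routes are correct; yours makes the argument for Theorem 3.3 self-contained at the cost of losing the explicit link to relative cotilting theory that the paper exploits later (e.g.\ in Theorems 4.9 and 4.13).
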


\begin{proof}
Since $_{\Lambda}I={\Hom}_{A}(A,M)\cong D{\Hom}_{A}(M,DA)$ and $DA\in {\add}\,M$, we know that $I$ is an injective $\Lambda$-module. Let $0\rightarrow A \rightarrow M_{0} \rightarrow M_{1} \rightarrow \cdots \rightarrow M_{m}\rightarrow 0$ be a minimal $M$-coresolution of $A$. Applying ${\Hom}_{A}(-,M)$, we get an exact sequence $0 \rightarrow {\Hom}_{A}(M_{m},M)\rightarrow \cdots \rightarrow {\Hom}_{A}(M_{1},M)\rightarrow {\Hom}_{A}(M_{0},M)\rightarrow I \rightarrow 0$ which is a projective resolution of $I$. This implies ${\pd}_{\Lambda}\,I\leqslant m$.

Let $F=F^{M}$ be an additive sub-bifunctor of  ${\Ext}^{1}_{A}(-,-)$. Note that $\mathcal{I}(F^{M})={\add}(DA\oplus M)={\add}\,M$, we know $M$ is an $F$-cotilting module. Then $A\cong {\End}_{\Lambda}I$ follows from Theorem 2.2 (1). Take a minimal projective resolution $\cdots \rightarrow P_{n}\rightarrow \cdots \rightarrow P_{1}\rightarrow P_{0}\rightarrow M \rightarrow 0$ of $M$. Since ${\Ext}^{i}_{A}(M,M)=0$ for $1\leqslant i \leqslant n-1$, applying ${\Hom}_{A}(-,M)$, we get an exact sequence $0\rightarrow \Lambda \rightarrow {\Hom}_{A}(P_{0},M)\rightarrow {\Hom}_{A}(P_{1},M)\rightarrow \cdots \rightarrow {\Hom}_{A}(P_{n},M)$ with ${\Hom}_{A}(P_{i},M)\in {\add\,\Hom}_{A}(A,M)$ $={\add}\,I$ for $0\leq i \leq n$. This implies $I\text{-}{\dom}\,\Lambda\geq n+1$.
\end{proof}

Taking $n=1$ and $m=0$, we recover the following result in \cite{AS} immediately.

\begin{corollary}{\rm \cite[Proposition 2.5]{AS}}Let $T$ be a generator-cogenerator over $A$ and $\Gamma={\End}_{A}T$. Then $\Gamma$ has dominant dimension at least $2$ and ${\add}\,_{\Gamma}T$ consists of all projective-injective $\Gamma$-modules.
\end{corollary}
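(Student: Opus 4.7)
The plan is to obtain this corollary as a direct specialization of Theorem 3.3 combined with Proposition 3.1, taking $n=1$ and $m=0$.

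First I would verify that the hypotheses of Theorem 3.3 reduce, in this regime, exactly to the statement that $T$ is a generator-cogenerator. Condition (i) becomes $DA \in {\add}\,T$, which is the cogenerator condition; condition (ii) asks ${\Ext}^{i}_{A}(T,T)=0$ for $1 \leq i \leq 0$ and is therefore vacuous; condition (iii) demands $T$-${\rm codim}\,A \leq 0$, meaning $A$ admits a minimal left ${\add}\,T$-approximation that is already an isomorphism, which is precisely $A \in {\add}\,T$, i.e.\ the generator condition. Hence with $M = T$ and $\Lambda = \Gamma$ we may invoke Theorem 3.3.

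Next I would apply Theorem 3.3(1) and (2) to conclude that $_{\Gamma}I = {\Hom}_{A}(A,T)$ is an injective $\Gamma$-module with ${\pd}_{\Gamma}\,I \leq 0$, so $I$ is projective-injective, and $I\text{-}{\dom}\,\Gamma \geq n+1 = 2$. Under the canonical isomorphism ${\Hom}_{A}(A,T) \cong {}_{\Gamma}T$, this identifies $_{\Gamma}T$ with $I$, so the first $n+1 = 2$ terms of a minimal injective resolution of $_{\Gamma}\Gamma$ lie in ${\add}\,_{\Gamma}T$. Since dominant dimension (in the classical sense) is defined as $I\text{-}{\dom}\,\Gamma$ for $I$ a maximal projective-injective summand, this immediately gives ${\dom}\,\Gamma \geq 2$.

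For the second assertion, I would invoke Proposition 3.1 with the same choice $n = 1$, $m = 0$; the condition $m \leq n$ holds, so ${\add}\,I$ consists of all injective $\Gamma$-modules of projective dimension at most $0$, i.e.\ precisely all projective-injective $\Gamma$-modules. Combined with the identification $_{\Gamma}T \cong I$, this gives ${\add}\,_{\Gamma}T = \{\text{projective-injective } \Gamma\text{-modules}\}$, completing the proof.

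The step that would require the most care is the bookkeeping of the identifications: translating between $_{\Gamma}T$ and ${\Hom}_{A}(A,T)$ as left $\Gamma$-modules and verifying that the minimal injective coresolution of $_{\Gamma}\Gamma$ produced by the functor ${\Hom}_{A}(-,T)$ applied to a projective resolution of $T$ indeed yields summands in ${\add}\,_{\Gamma}T$. Everything else is a matter of matching definitions, so I do not anticipate any genuine obstacle.
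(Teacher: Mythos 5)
Your proposal is correct and follows essentially the same route as the paper: specialize Theorem 3.3 with $n=1$, $m=0$ (observing that the generator-cogenerator hypotheses on $T$ translate exactly into conditions (i)--(iii), in particular $A\in{\add}\,T$ giving $T$-codim$\,A=0$), identify $_{\Gamma}T$ with $I={\Hom}_{A}(A,T)$, and then invoke Proposition 3.1 to see that ${\add}\,I$ is precisely the class of projective-injective $\Gamma$-modules. No genuine deviation from the paper's argument.
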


\begin{proof}
$A \in {\add}\,T$ implies $T$-{\rm codim}$\,A=0$. By Theorem 3.3, we know that $_{\Gamma}I={\Hom}_{A}(A,T)$ is projective and $\Gamma$ has dominant dimension at least $2$. Moreover, according to Proposition 3.1, ${\add}\,I={\add}\,_{\Gamma}T$ consists of all projective-injective modules.
\end{proof}

For an $A$-module $X$, we show how to calculate $I\text{-}{\dom}\,{\Hom}_{A}(X,M)$ in terms of ${\Ext}^{i}_{A}(X,M)$. This is inspired by \cite[Lemma 3]{BJM} and \cite[Theorem 1.2]{M}.

\begin{prop}
Keep the notations in Theorem {\rm 3.3} and let $I\text{-}{\dom}_{\geq 2}(\Lambda)$ $=\{Y\in \Lambda\text{-}{\mo}\mid I\text{-}{\dom}\,Y\geq2\}$. The functor ${\Hom}_{A}(-,M)$ induces a duality between $A$-{\mo} and $I\text{-}{\dom}_{\geq 2}(\Lambda)$. Moreover, $I\text{-}{\dom}\,{\Hom}_{A}(X,M)$ $={\rm inf}\{i\geq1\mid {\Ext}^{i}_{A}(X,M)\neq0\}+1$ for an $A$-module $X$.
\end{prop}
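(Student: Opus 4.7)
The plan is to exhibit $\Hom_{A}(-,M)$ and $\Hom_{\Lambda}(-,M)$ as mutually inverse contravariant equivalences between $A\text{-}\mo$ and $I\text{-}{\dom}_{\geq 2}(\Lambda)$, and to read off the value of $I\text{-}{\dom}\,\Hom_{A}(X,M)$ from the complex obtained by applying $\Hom_{A}(-,M)$ to a projective resolution of $X$. Two inputs drive everything: Theorem 3.3(1) gives that ${}_{\Lambda}M\cong I$ is injective, so $\Hom_{\Lambda}(-,M)$ is an exact contravariant functor, and Theorem 3.3(2) gives $\Hom_{\Lambda}(M,M)=A$; by additivity this promotes to mutually inverse contravariant equivalences $\add\,A\leftrightarrow\add\,I$ via $\Hom_{A}(-,M)$ and $\Hom_{\Lambda}(-,M)$.

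For the duality itself I would argue both containments directly. Given any $A$-module $X$, a projective presentation $P_{1}\to P_{0}\to X\to 0$ becomes the left-exact sequence $0\to\Hom_{A}(X,M)\to\Hom_{A}(P_{0},M)\to\Hom_{A}(P_{1},M)$ with its second and third terms in $\add\,I$; the first two terms of the minimal injective coresolution of $\Hom_{A}(X,M)$ appear as direct summands of these, giving $I\text{-}{\dom}\,\Hom_{A}(X,M)\geq 2$. Conversely, for $Y\in I\text{-}{\dom}_{\geq 2}(\Lambda)$ with minimal injective coresolution starting $0\to Y\to I_{0}\to I_{1}$, the equivalence $\add\,A\leftrightarrow\add\,I$ produces $Q_{i}\in\add\,A$ with $I_{i}=\Hom_{A}(Q_{i},M)$ and identifies the differential with a map $Q_{1}\to Q_{0}$, so $X:=\mathrm{coker}(Q_{1}\to Q_{0})$ satisfies $\Hom_{A}(X,M)=\ker(I_{0}\to I_{1})=Y$. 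That the two functors are mutually inverse then follows by applying the exact functor $\Hom_{\Lambda}(-,M)$ to the left-exact sequence above and using the $\add\,A\leftrightarrow\add\,I$ equivalence to obtain an exact $P_{1}\to P_{0}\to\Hom_{\Lambda}(\Hom_{A}(X,M),M)\to 0$, identifying the double dual with $X$; the symmetric argument handles $Y\cong\Hom_{A}(\Hom_{\Lambda}(Y,M),M)$.

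For the formula I would take a projective resolution $\cdots\to P_{1}\to P_{0}\to X\to 0$ and apply $\Hom_{A}(-,M)$ to obtain a complex $0\to Y\to J_{0}\to J_{1}\to\cdots$ with $J_{i}\in\add\,I$ and $i$-th cohomology $\Ext^{i}_{A}(X,M)$ for $i\geq 1$. Put $k_{0}=\inf\{i\geq 1\mid\Ext^{i}_{A}(X,M)\neq 0\}$. For $I\text{-}{\dom}\,Y\geq k_{0}+1$, the complex is exact at positions $Y,J_{0},\ldots,J_{k_{0}-1}$, and truncating to $0\to Y\to J_{0}\to\cdots\to J_{k_{0}}\to\mathrm{coker}(J_{k_{0}-1}\to J_{k_{0}})\to 0$ gives an exact sequence with $k_{0}+1$ injectives in $\add\,I$, so the minimal injective coresolution of $Y$ inherits its first $k_{0}+1$ terms in $\add\,I$. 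For the reverse, assume $I\text{-}{\dom}\,Y\geq N+1$ and truncate the minimal injective coresolution as $0\to Y\to I_{0}\to\cdots\to I_{N}\to Z_{N+1}\to 0$; applying the exact functor $\Hom_{\Lambda}(-,M)$ and using $I_{j}=\Hom_{A}(Q_{j},M)$ yields an exact $0\to\Hom_{\Lambda}(Z_{N+1},M)\to Q_{N}\to\cdots\to Q_{0}\to X\to 0$ with $Q_{j}\in\add\,A$. Splicing in a projective resolution of $\Hom_{\Lambda}(Z_{N+1},M)$ produces a full projective resolution of $X$ whose image under $\Hom_{A}(-,M)$ reproduces $I_{0}\to\cdots\to I_{N}$ in its first $N+1$ positions, so the Ext cohomology vanishes in positions $1,\ldots,N-1$, giving $k_{0}\geq N$. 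Setting $N=I\text{-}{\dom}\,Y-1$ yields $I\text{-}{\dom}\,Y\leq k_{0}+1$.

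The main obstacle is this upper bound: one must leverage the injectivity of $M$ as a $\Lambda$-module (and hence the exactness of $\Hom_{\Lambda}(-,M)$) to convert the minimal injective coresolution of $Y$ into a partial projective resolution of $X$, then splice in a projective resolution of its leftmost kernel and carefully verify that re-applying $\Hom_{A}(-,M)$ genuinely recovers the minimal injective coresolution in the critical range, so that Ext cohomology vanishes exactly where the coresolution was exact.
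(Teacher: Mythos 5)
Your proof is correct and follows the same essential strategy as the paper: dualize a projective resolution of $X$ by $\Hom_A(-,M)$ to bound $I\text{-}\dom\,\Hom_A(X,M)$ from below, and convert the injective coresolution of a module in $I\text{-}\dom_{\geq 2}(\Lambda)$ back to a projective presentation over $A$ to get denseness and the upper bound. There are two small stylistic differences worth noting. For the duality, the paper quotes the dual of a lemma of Auslander--Platzeck--Reiten to get fully faithfulness of $\Hom_A(-,M)$ when $M$ is a cogenerator, and then runs a short Yoneda argument for denseness; you instead exhibit $\Hom_\Lambda(-,I)$ (written $\Hom_\Lambda(-,M)$ under the identification ${}_\Lambda M\cong I$, which is legitimate since $I=\Hom_A(A,M)$) as an explicit quasi-inverse via the $\add\,A\leftrightarrow\add\,I$ equivalence coming from $A\cong\End_\Lambda I$. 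This is a bit more self-contained. For the upper bound $I\text{-}\dom\,\Hom_A(X,M)\leq l+1$, the paper simply asserts that exceeding $l+1$ would contradict the definition of $l$; your splice-and-dualize argument makes explicit why that implication holds, namely that the double-dual natural isomorphisms on $\add\,A$ and $\add\,I$ force the dualized truncated coresolution to reproduce the exact truncated coresolution of $Y$, killing $\Ext^i_A(X,M)$ for $1\leq i\leq N-1$. So the core mechanism is identical; you have merely unwound the terse step in the paper's proof, which is a reasonable thing to do.
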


\begin{proof}
For an $A$-module $X$, let $l={\rm inf}\{i\geq1\mid {\Ext}^{i}_{A}(X,M)\neq0\}$ and $\cdots \rightarrow P_{l} \rightarrow P_{l-1}\rightarrow \cdots \rightarrow P_{0}\rightarrow X \rightarrow 0$ be a minimal projective resolution of $X$. Applying ${\Hom}_{A}(-,M)$ yields an injective resolution $0 \rightarrow {\Hom}_{A}(X,M)\rightarrow {\Hom}_{A}(P_{0},M)\rightarrow \cdots \rightarrow {\Hom}_{A}(P_{l-1},M)\rightarrow {\Hom}_{A}(P_{l},M) $ with ${\Hom}_{A}(P_{i},M)\in{\add}\,I$ for $0\leq i\leq l$. This implies $I\text{-}{\dom}\,{\Hom}_{A}(X,M)$ $\geq l+1$. If $I\text{-}{\dom}\,{\Hom}_{A}(X,M)>l+1$, we get ${\rm inf}\{i\geq1\mid {\Ext}^{i}_{A}(X,M)\neq0\}>l$, a contradiction. Thus $I\text{-}{\dom}\,{\Hom}_{A}(X,M)=l+1\geq 2$.

Note that $M$ is a cogenerator, according to the dual of \cite[Lemma 1.3(b)]{APR}, ${\Hom}_{A}(-,M)$ is fully faithful. We only need to show it is dense. For a $\Lambda$-module $Y$ with $I\text{-}{\dom}\,Y\geq2$, there exists an exact sequence $0\rightarrow Y \rightarrow {\Hom}_{A}(Q_{0},M)\xrightarrow{f^{*}}{\Hom}_{A}(Q_{1},M)$ where $Q_{0}$ and $Q_{1}$ are projective $A$-modules. By Yoneda's lemma, there is an $f\in{\Hom}_{A}(Q_{1},Q_{0})$ such that $f^{*}={\Hom}_{A}(f,M)$. Applying ${\Hom}_{A}(-,M)$ to $Q_{1} \xrightarrow{f} Q_{0}\rightarrow {\rm coker}f\rightarrow 0$, we get an exact sequence $0\rightarrow {\Hom}_{A}({\rm coker}f,M)\rightarrow {\Hom}_{A}(Q_{0},M)\xrightarrow{f^{*}}
{\Hom}_{A}(Q_{1},M)$. Thus $Y\cong{\Hom}_{A}({\rm coker}f,M)$ and this completes our proof.
\end{proof}

Conversely, any algebra with finite relative dominant dimension is  the endomorphism algebra of a cogenerator satisfying certain conditions.

\begin{thm}Let $\Lambda$ be an Artin algebra, $I$ be an injective $\Lambda$-module with projective dimension at most $m$ $(m\geq0)$ and $A={\End}_{\Lambda}I$. If $I$-{\rm domdim}\,$\Lambda\geq n+1$ $(n\geq 1)$, we have the following.

$(1)$ $_{A}M={\Hom}_{\Lambda}(\Lambda, I)$ is a cogenerator over $A$ and $M$-{\rm codim}$\,A\leqslant m$.

$(2)$ ${\Ext}^{i}_{A}(M,M)=0$ for $1\leq i\leq n-1$.

$(3)$ $\Lambda\cong{\End}_{A}\,M$.
\end{thm}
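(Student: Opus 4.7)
My plan is to analyze the pair of contravariant functors $G := \Hom_{\Lambda}(-, I) \colon \Lambda\text{-}\mo \to A\text{-}\mo$ and $H := \Hom_{A}(-, M) \colon A\text{-}\mo \to \Lambda\text{-}\mo$. Since $I$ is injective, $G$ is exact; the basic identifications $G(\Lambda) = M$ and $G(I) = A$ imply that $G$ sends $\add\Lambda$ into $\add M$ and $\add I$ into $\add A$ (projective $A$-modules). All three parts of the theorem will then follow by applying $G$ (and then $H$) to two canonical exact sequences: the projective resolution of $I$ (of length $\leq m$) and the minimal injective resolution of $\Lambda$ (whose first $n+1$ terms lie in $\add I$).

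For part (1), the bound $M$-{\rm codim}$\,A \leq m$ comes from applying $G$ to a projective resolution $0 \to P_{m} \to \cdots \to P_{0} \to I \to 0$; exactness produces $0 \to A \to G(P_{0}) \to \cdots \to G(P_{m}) \to 0$ with each $G(P_{j}) \in \add M$, which is an $M$-coresolution of $A$ of length $\leq m$. For the cogenerator statement $DA \in \add M$, I invoke the Nakayama duality $\Hom_{\Lambda}(X, \nu P) \cong D\Hom_{\Lambda}(P, X)$ (valid for projective $P$) with $P = \nu^{-1} I \in \add\Lambda$ (projective since $I$ is injective) and $X = I$: this gives $A = \End_{\Lambda}(I) \cong D\Hom_{\Lambda}(\nu^{-1}I, I)$, hence $DA \cong \Hom_{\Lambda}(\nu^{-1}I, I) \in \add \Hom_{\Lambda}(\Lambda, I) = \add M$, confirming that $_{A}M$ is a cogenerator.

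For parts (2) and (3), apply $G$ to the minimal injective resolution $0 \to \Lambda \to I_{0} \to I_{1} \to \cdots$. The hypothesis $I_{0}, \ldots, I_{n} \in \add I$ together with exactness of $G$ produces a partial projective resolution $G(I_{n}) \to \cdots \to G(I_{0}) \to M \to 0$ of $M$ over $A$, whose first $n+1$ terms lie in $\add A$. Applying $H$ yields a cochain complex whose cohomology in degrees $0 \leq i \leq n-1$ computes $\Ext^{i}_{A}(M, M)$. The crucial ingredient is the natural isomorphism $\eta_{X} \colon X \xrightarrow{\sim} HG(X)$ for $X \in \add I$: immediate at $X = I$, where $HG(I) = \Hom_{A}(A, M) = M = I$, and extended to all of $\add I$ by additivity and naturality. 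By naturality, $\eta$ yields an isomorphism of complexes $I_{\bullet} \simeq HG(I_{\bullet})$ respecting differentials, so the $H$-complex is identified with the tail $0 \to \End_{A} M \to I_{0} \to \cdots \to I_{n}$ of the injective resolution. Reading cohomology: at degree $0$ we get $\End_{A} M = \ker(I_{0} \to I_{1}) = \Lambda$, proving (3); at degrees $1, \ldots, n-1$ the cohomology vanishes by exactness of the injective resolution, proving (2).

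The main obstacle is verifying naturality of $\eta_{X}$ on $\add I$ and compatibility with the differentials of the injective resolution, so that the cochain identification $HG(I_{\bullet}) \cong I_{\bullet}$ is genuine. Concretely, for an indecomposable summand $I' \subseteq I$ with associated idempotent $e \in A$, one identifies $\Hom_{\Lambda}(I', I) \cong Ae$ and $\Hom_{A}(Ae, M) \cong eM \cong I'$; naturality in morphisms between summands of $I$ must be checked so the comparison of differentials goes through. This is a standard reflexivity check in the Morita-like setup given by the $\Lambda$-$A$-bimodule $I$, but it is the technical heart of the argument.
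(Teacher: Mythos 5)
Your overall architecture — the contravariant pair $G=\Hom_\Lambda(-,I)$, $H=\Hom_A(-,M)$, the Nakayama isomorphism for the cogenerator claim, and the naturality of $\eta\colon\mathrm{id}\to HG$ on $\add I$ applied to the injective coresolution of $\Lambda$ — is essentially the paper's proof, and your treatment of parts (2) and (3) is sound; in fact you neatly fold the paper's separate "dualizing summand" step for (3) into the degree-$0$ cohomology computation.

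However, there is a genuine gap in your part (1), at the sentence ``which is an $M$-coresolution of $A$ of length $\leq m$.'' The paper's definition of $M\text{-codim}$ uses the \emph{minimal} $M$-coresolution, i.e.\ one in which each map is a minimal left $\add M$-approximation, and producing an arbitrary exact sequence $0\to A\to G(P_0)\to\cdots\to G(P_m)\to 0$ with terms in $\add M$ does \emph{not} in general bound that. Concretely, if $A=k[x]/(x^3)$, $M=A\oplus k$, $V=k[x]/(x^2)$, then $0\to V\to A\to k\to 0$ is an $M$-coresolution of length $1$, yet the minimal left $\add M$-approximation of $V$ is $V\to A\oplus k$ with cokernel again $V$, so the minimal $M$-coresolution never terminates and $M\text{-codim}\,V=\infty$. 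The inference ``length-$\leq m$ coresolution $\Rightarrow M\text{-codim}\leq m$'' therefore fails in the generality you invoke it. To close the gap you must check that the maps in $0\to A\to G(P_0)\to\cdots$ are left $\add M$-approximations. This is exactly where the reflexivity $HG(Q)\cong Q$ for $Q\in\add\Lambda$ is needed: it is equivalent to your part (3) ($\eta_\Lambda$ iso, hence $\eta_Q$ iso for $Q$ projective by additivity), and once you have it, applying $H$ to your coresolution recovers the exact projective resolution $0\to P_m\to\cdots\to P_0\to I\to 0$, which is precisely the approximation condition. So the argument is salvageable, but you should establish (3) first and then feed it into (1) — the paper does this by invoking the dualizing-summand lemma for $\Lambda$ up front before turning to $M\text{-codim}$.
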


\begin{proof}
 Since $I$-{\rm domdim}\,$\Lambda\geq n+1$, there exists a minimal injective resolution $0 \rightarrow \Lambda \rightarrow I_{0} \rightarrow I_{1} \rightarrow \cdots \rightarrow I_{n-1} \rightarrow I_{n}\rightarrow \cdots$ with $I_{j}\in{\add}\,I$ for $0\leq j \leq n$. $I_{0},I_{1} \in{\add}\,I$ implies that $I$ dualizes $\Lambda$. Then $\Lambda\cong{\End}_{A}\,M$ holds according to \cite[Lemma 2.1]{AS}. Note that $DA=D{\Hom}_{\Lambda}(I,I)\cong{\Hom}_{\Lambda}(\upsilon^{-}_{\Lambda}I,I)\in {\add}\,{\Hom}_{\Lambda}(\Lambda,I)={\add}\,M$ where $\upsilon_{\Lambda}$ is the Nakayama functor. Thus $M$ is a cogenerator over $A$. Let $0 \rightarrow Q_{m}\xrightarrow{f_{m}} \cdots \rightarrow Q_{1} \xrightarrow{f_{1}} Q_{0}\xrightarrow{f_{0}} I \rightarrow 0$ be a minimal projective resolution of $I$. Since $I$ dualizes $\Lambda$, we know $I$ dualizes the projective $\Lambda$-modules $Q_{i}$ and then $Q_{i}\cong{\Hom}_{A}({\Hom}_{\Lambda}(Q_{i},I),M)$ holds for $0\leq i \leq m$. Thus applying ${\Hom}_{\Lambda}(-,I)$ yields an exact sequence $0 \rightarrow A \xrightarrow{f^{*}_{0}} {\Hom}_{\Lambda}(Q_{0},I)\xrightarrow{f^{*}_{1}} {\Hom}_{\Lambda}(Q_{1},I)\rightarrow \cdots \xrightarrow{f^{*}_{m}} {\Hom}_{\Lambda}(Q_{m},I)\rightarrow 0 $ with ${\Hom}_{\Lambda}(Q_{i},I)\in {\add}\,M$ and $f^{*}_{i}$ left ${\add}\,M$-approximation. It follows that $M$-${\rm codim}\,A\leqslant m$.

 Now we prove ${\Ext}^{i}_{A}(M,M)=0$ for $1\leq i\leq n-1$. Applying ${\Hom}_{\Lambda}(-,I)$ to the minimal injective resolution of $\Lambda$ gives rise to an exact sequence $\cdots \rightarrow {\Hom}_{\Lambda}(I_{n},I)\rightarrow {\Hom}_{\Lambda}(I_{n-1},I)\rightarrow \cdots \rightarrow {\Hom}_{\Lambda}(I_{0},I)\rightarrow M \rightarrow 0$ which is a projective resolution of $M$. In order to calculate ${\Ext}^{i}_{A}(M,M)$, we need to consider the upper row of the following diagram. According to the dual of \cite[Proposition 2.1, \uppercase \expandafter{\romannumeral2} ]{ARS}, we have ${\Hom}_{A}({\Hom}_{\Lambda}(I_{j},I),M)={\Hom}_{A}({\Hom}_{\Lambda}(I_{j},I),{\Hom}_{\Lambda}(\Lambda,I))\cong {\Hom}_{\Lambda}(\Lambda,I_{j})\cong I_{j}$ for $0\leq j \leq n$. Then the following diagram is commutative.

\begin{small}
\[ \xymatrix{
0\ar[r] &(M,M)\ar[d]^\cong \ar[r] &((I_{0},I),M) \ar[d]^\cong \ar[r] & \cdots \ar[r]&((I_{n-1},I),M)\ar[d]^\cong \ar[r]& _{A}((I_{n},I),M)\ar[d]^\cong \\
0\ar[r] &\Lambda  \ar[r] & I_{0} \ar[r] & \cdots \ar[r]& I_{n-1}\ar[r] & I_{n} & }
\]
\end{small}
Since the lower row is exact, the upper row is also exact and then we get ${\Ext}^{i}_{A}(M,M)=0$ for $1\leq i\leq n-1$.
\end{proof}

The above theorem is a generalization of the following result in \cite{AS}.

\begin{corollary}{\rm \cite[Proposition 2.6]{AS}} Let $\Gamma$ be an Artin algebra with dominant dimension at least {\rm 2}, $T$ be the maximal injective direct summand of $\Gamma$ and $A={\End}_{\Gamma}\,T$. Then $_{A}M={\Hom}_{\Gamma}(\Gamma,T)$ is a generator-cogenerator over $A$ and $\Gamma\cong{\End}_{A}\,M$.
\end{corollary}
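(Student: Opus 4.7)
The plan is to deduce this corollary by specializing Theorem 3.6 to the case $m=0$ and $n=1$. Set $\Lambda=\Gamma$ and $I=T$. Because $T$ is a direct summand of the regular module $\Gamma$, it is projective, so ${\pd}_\Gamma T=0$, which gives the hypothesis ${\pd}_\Lambda I\leq m$ with $m=0$. Moreover, since $T$ is the maximal (projective-)injective direct summand of $\Gamma$, the assumption ${\dom}\,\Gamma\geq 2$ is, by the definition recalled in the introduction, precisely $T\text{-}{\dom}\,\Gamma\geq 2$. Hence the hypothesis $I\text{-}{\dom}\,\Lambda\geq n+1$ of Theorem 3.6 holds with $n=1$.

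Applying Theorem 3.6 directly, part (1) gives that $_{A}M={\Hom}_\Gamma(\Gamma,T)$ is a cogenerator over $A$ with $M\text{-}{\rm codim}\,A\leq 0$; part (2) is vacuous because the range $1\leq i\leq n-1=0$ is empty; and part (3) gives $\Gamma\cong {\End}_A M$. It remains to upgrade cogenerator'' to generator-cogenerator,'' which amounts to unwinding the bound $M\text{-}{\rm codim}\,A\leq 0$: by the definition of $M$-coresolution dimension stated before Theorem 3.3, this means there exists a minimal $M$-coresolution of the form $0\to A\to M_0\to 0$ with $M_0\in{\add}\,M$, forcing $A\in {\add}\,M$. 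Thus $M$ is also a generator, and the corollary follows.

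There is essentially no obstacle: the corollary is a direct specialization of Theorem 3.6. The only point worth spelling out is the translation $M\text{-}{\rm codim}\,A\leq 0\;\Longleftrightarrow\; A\in{\add}\,M$, which is what converts the cogenerator conclusion of Theorem 3.6 into the generator-cogenerator conclusion asserted here and recovers Auslander--Solberg's Proposition~2.6 from \cite{AS}.
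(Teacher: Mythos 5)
Your proposal is correct and follows essentially the same route as the paper's proof: both identify $T\text{-}{\dom}\,\Gamma = {\dom}\,\Gamma \geq 2$, invoke Theorem 3.6 with $I=T$ (implicitly $m=0$, $n=1$), and convert the conclusion $M\text{-}{\rm codim}\,A \leq 0$ into $A \in {\add}\,M$ to upgrade cogenerator to generator-cogenerator. Your version simply spells out the specialization of parameters in more detail.
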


\begin{proof}
Note that $T\text{-}{\dom}\,\Gamma={\dom}\,\Gamma\geq 2$. By Theorem 3.6, we have $M$-{\rm codim}$\,A=0$ and then $A\in{\add}\,M$. Thus $M$ is a generator-cogenerator and $\Gamma\cong{\End}_{A}\,M$.
\end{proof}

For an Artin algebra $\Lambda$ with $I$-domdim\,$\Lambda\geq n+1$, although we have ${\dom}\,_{\Lambda}\Lambda={\dom}\,\Lambda_{\Lambda}$, there does not necessarily exist an injective right $\Lambda$-module $J$ with projective dimension at most $m$ such that $J_{\Lambda}\text{-}{\dom}\,\Lambda_{\Lambda}\geq n+1$. If such an injective  right $\Lambda$-module exists, $\Lambda$ satisfies the two-sided $(m+1,n+1)$-condition which is introduced by Iyama in \cite{I1}. He established the $m$-relative higher Auslander correspondence to characterize the algebras satisfying the two-sided $(m+1,n+1)$-condition, see \cite{I2} for details. The following result is an easy case of \cite[Theorem 4.2.2]{I2}.

\begin{prop}Let $M$ be an $A$-module satisfying the conditions $(i)$, $(ii)$ and $(iii)$ in Theorem {\rm 3.3} and $\Lambda={\End}_{A}M$. If $m<n$ and $A$ has a minimal $M$-coresolution $0 \rightarrow A \rightarrow M_{0} \rightarrow M_{1} \rightarrow \cdots \rightarrow M_{m} \rightarrow 0$ with ${\pd}(\oplus^{m}_{i=0}M_{i})\leq m$, there exists an injective right $\Lambda$-module $J$ with projective dimension at most $m$ such that $J\text{-}{\dom}\,\Lambda_{\Lambda}\geq n+1$.
\end{prop}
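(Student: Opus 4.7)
The plan is to invoke Iyama's relative higher Auslander correspondence \cite[Theorem 4.2.2]{I2}, which characterizes Artin algebras satisfying the two-sided $(m+1,n+1)$-condition by exactly the hypotheses assumed here: conditions (i)--(iii) of Theorem~3.3 together with $\pd_A(\oplus_{i=0}^m M_i)\leq m$. By Theorem~3.3, the first three conditions already give $\Lambda=\End_A M$ the left-sided $(m+1,n+1)$-condition via $I=\Hom_A(A,M)$. The role of the extra hypothesis $\pd(\oplus M_i)\leq m$ is to upgrade this to the two-sided $(m+1,n+1)$-condition, so that $\Lambda^{\op}$ also satisfies the $(m+1,n+1)$-condition. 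Once this is known, one takes $J:=J_0\oplus J_1\oplus\cdots\oplus J_n$, where $J_0,\dots,J_n$ are the first $n+1$ terms of the minimal injective coresolution of $\Lambda_\Lambda$; this $J$ is injective with $\pd_{\Lambda^{\op}} J\leq m$ and $J\text{-}\dom \Lambda_\Lambda\geq n+1$ by construction.

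For a more explicit candidate, set $\widetilde M:=\oplus_{i=0}^m M_i$ and $J:=D\Hom_A(\widetilde M,M)$. Since $\widetilde M\in\add M$, the left $\Lambda$-module $\Hom_A(\widetilde M,M)$ is projective, hence $J$ is an injective right $\Lambda$-module. The bound $\pd_{\Lambda^{\op}} J\leq m$ is verified directly: the assumptions force $\Ext^i_A(\widetilde M,M)=0$ for every $i\geq 1$, since for $1\leq i\leq n-1$ this comes from condition (ii) together with $\widetilde M\in\add M$, while for $i\geq n>m\geq\pd\widetilde M$ it is automatic. Applying $\Hom_A(-,M)$ to a projective resolution of $\widetilde M$ of length $\leq m$ therefore produces an exact sequence $0\to\Hom_A(\widetilde M,M)\to\Hom_A(Q_0,M)\to\cdots\to\Hom_A(Q_m,M)\to 0$ of left $\Lambda$-modules with each $\Hom_A(Q_j,M)\in\add I$ injective; Matlis-dualizing this yields a projective resolution of $J$ of length $\leq m$ in $\mo\text{-}\Lambda$.

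The main obstacle is establishing $J\text{-}\dom \Lambda_\Lambda\geq n+1$. A direct attempt to mirror Theorem~3.3 on the opposite side via $(A^{\op},DM)$, for which $\End_{A^{\op}}(DM)\cong\Lambda^{\op}$, breaks down because the dual of condition (i) would require $A\in\add M$, which is not assumed. This is precisely where the hypothesis $\pd(\oplus M_i)\leq m$ enters in Iyama's argument: it serves as the substitute for the missing generator condition, permitting the minimal injective coresolution of $\Lambda_\Lambda$ to be read off from the $M$-coresolution of $A$ together with $\pd\widetilde M\leq m$. The technical heart, mirroring the proof of Theorem~3.3 on the opposite side, is to show that a suitable functor applied to the $M$-coresolution of $A$ produces the first $n+1$ terms of the injective coresolution of $\Lambda_\Lambda$, each lying in $\add J$ with projective dimension bounded by $\pd\widetilde M\leq m$.
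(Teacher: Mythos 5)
Your first paragraph is essentially the paper's argument: both invoke the dual of \cite[Theorem 4.2.2]{I2} to obtain the two-sided $(m+1,n+1)$-condition for $\Lambda$ and then set $J=\oplus_{i=0}^{n}J_{i}$ from the minimal injective resolution of $\Lambda_{\Lambda}$. The one thing you leave implicit that the paper spells out is the translation into the hypotheses Iyama's theorem actually takes: the paper introduces a basic $T$ with $\add T=\add(\oplus_{i=0}^{m}M_{i})$ and checks that $T$ is an $m$-tilting $A$-module and that $M\in T^{\perp}$ (both verifications use $m<n$ to stretch the vanishing ${\Ext}^{i}_{A}(M,M)=0$ for $1\leq i\leq n-1$ to all positive degrees on $T$ and on $(T,M)$). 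Your second paragraph's explicit $J=D\,{\Hom}_{A}(\widetilde M,M)$ and the bound ${\pd}_{\Lambda^{\op}}J\leq m$ are correct and a nice addition, but as you note in your third paragraph, the proof of $J\text{-}{\dom}\,\Lambda_{\Lambda}\geq n+1$ is not completed along that road and still falls back on Iyama's theorem, so it is not an independent alternative route.
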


\begin{proof}
Let $T$ be a basic $A$-module such that ${\add}\,T={\add}(\oplus^{m}_{i=0}M_{i})$. $T$ is an $m$-tilting $A$-module and $M\in T^{\perp}$. By the dual of of \cite[Theorem 4.2.2]{I2}, $\Lambda$ satisfies the two-sided $(m+1,n+1)$-condition. Thus there exists a minimal injective resolution $0 \rightarrow \Lambda_{\Lambda}\rightarrow J_{0}\rightarrow \cdots \rightarrow J_{n}\rightarrow J_{n+1}\rightarrow \cdots$ of right $\Lambda$-module $\Lambda$ with ${\pd}\,J_{i}\leq m$ for $0\leq i \leq n$. Take $J={\add}(\oplus^{n}_{i=0}J_{i})$, we have $J\text{-}{\dom}\,\Lambda_{\Lambda}\geq n+1$.
\end{proof}

\section{Almost n-minimal Auslander-Gorenstein algebras}

In \cite {AS}, Iyama and Solberg defined the $n$-minimal Auslander-Gorenstein algebra and characterized them as the endomorphism algebras of $n$-precluster tilting  modules. Recently, Adachi and Tsukamoto introduced the almost $n$-minimal Auslander-Gorenstein algebra in \cite{AT} which is a generalization of the $n$-minimal Auslander-Gorenstein algebra.  In this section, we define the almost $n$-precluster tilting module and characterize the almost $n$-minimal Auslander-Gorenstein algebra as the endomorphism algebra of the almost $n$-precluster tilting module.
Moreover, we describe the Gorenstein projective modules over the almost $n$-minimal Auslander-Gorenstein algebra in terms of the corresponding almost $n$-precluster tilting module.

\subsection{Almost n-precluster tilting modules}

In this subsection, we introduce and study the almost $n$-precluster tilting module.

\begin{definition}{\rm \cite[Definition 3.11]{AT}}Let $\Lambda$ be an Artin algebra and $I$ be the direct sum of all pairwise non-isomorphic indecomposable injective $\Lambda$-modules with projective dimension at most {\rm 1}. $\Lambda$ is called an almost $n$-minimal Auslander-Gorenstein algebra $($respectively, an almost $n$-Auslander algebra $)$ if it satisfies ${\id}\,\Lambda\leq n+1 \leq I\text{-}{\dom}\,\Lambda$ $(\text{respectively},\,{\gl}\,\Lambda\leq n+1 \leq I\text{-}{\dom}\,\Lambda\,)$ for an integer $n\geq0$.
\end{definition}

It is obvious that any $n$-minimal Auslander-Gorenstein algebra (${\id}\,\Lambda\leq n+1 \leq {\dom}\,\Lambda$)  is an almost $n$-minimal Auslander-Gorenstein algebra and any $n$-Auslander algebra ($\,{\gl}\,\Lambda\leq n+1 \leq {\dom}\,\Lambda\,$) is an almost $n$-Auslander algebra. Recall that an Artin algebra $\Lambda$ is called an $n$-Gorenstein (or Iwanaga-Gorenstein more precisely) algebra if ${\id}_{\Lambda}\Lambda={\id}\Lambda_{\Lambda}\leq n$. 1-Gorenstein algebra is an almost $m$-minimal Auslander-Gorenstein algebra for any integer $m\geq 0$. On the other hand, any almost $n$-minimal Auslander-Gorenstein algebra is $(n+1)$-Gorenstein according to \cite[Proposition 3.14 (1)]{AT}. In fact, an almost $n$-minimal Auslander-Gorenstein algebra $\Lambda$ is either a 1-Gorenstein algebra or an algebra with ${\id}\,\Lambda= n+1 = I\text{-}{\dom}\,\Lambda$ $(n\geq1)$.

Now we give the definition of the almost $n$-precluster tilting module.

\begin{definition}
Let $A$ be an Artin algebra and $M$ be an $A$-module. We call $M$ an almost $n$-precluster tilting module if it satisfies the following conditions

$(1)$ $M$ is a cogenerator over $A$.

$(2)$ ${\Ext}^{i}_{A}(M,M)=0$ for $1\leq i \leq n-1$.

$(3)$ $\tau_{n}M\in{\add}\,M$ where $\tau_{n}=\tau\Omega^{n-1}_{A}:A\text{-}\underline{\mo}\rightarrow A\text{-}\overline{\mo}$ is the $n$-Auslander-Reiten translation.

$(4)$ $M${\rm -codim}\,$A\leq 1$.
\end{definition}

Here we use the notion of almost $n$-precluster tilting module in correspondence to the notion of almost $n$-minimal Auslander-Gorenstein algebra introduced in \cite{AT}. Obviously any $n$-precluster tilting module is an almost $n$-precluster tilting module. Furthermore, we show the following relations between almost $n$-precluster tilting modules and $n$-precluster tilting modules.

\begin{prop}
Let $M$ be an almost $n$-precluster tilting module. Then $M$ is an $n$-precluster tilting module if and only if $A\in{\add}\,M$.
\end{prop}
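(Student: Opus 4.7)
The forward direction is immediate: an $n$-precluster tilting module in the sense of \cite{IS} is by definition a generator over $A$, so $A\in {\add}\,M$.

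For the converse, assume $M$ is almost $n$-precluster tilting with $A\in {\add}\,M$. Three of the four defining conditions of an $n$-precluster tilting module are inherited at once from the almost version: $M$ is a generator-cogenerator (cogenerator by Definition 4.2(1), generator since $A\in {\add}\,M$); $\Ext^{i}_{A}(M,M)=0$ for $1\leq i\leq n-1$; and $\tau_{n}M\in {\add}\,M$. The only nontrivial condition left to verify is $\tau_{n}^{-}M\in {\add}\,M$.

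To establish this, I would pass to the endomorphism algebra $\Lambda={\End}_{A}M$ and exploit the correspondence already set up in Section 3. The hypothesis $A\in {\add}\,M$ makes the minimal $M$-coresolution of $A$ trivial, so $M$-${\rm codim}\,A=0$. Applying Theorem 3.3, the $\Lambda$-module $I={\Hom}_{A}(A,M)$ is injective of projective dimension $0$, i.e.\ projective-injective, and satisfies $I$-${\dom}\,\Lambda\geq n+1$. Projective-injectivity of $I$ upgrades this to the absolute bound $\dom\,\Lambda\geq n+1$. Combined with ${\id}\,\Lambda\leq n+1$ — which follows from $\tau_{n}M\in {\add}\,M$ together with the cogenerator property and the Ext-vanishing, via the $n$-Auslander-Reiten formula — the algebra $\Lambda$ is $n$-minimal Auslander-Gorenstein. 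Invoking the Iyama-Solberg correspondence \cite{IS}, the $A$-module ${\Hom}_{\Lambda}(\Lambda,J)$, with $J$ the maximal projective-injective summand of $\Lambda$, is $n$-precluster tilting; since $J=I$ and $\Lambda\cong {\End}_{A}M$, Theorem 3.6 identifies this module with $M$ itself, so $\tau_{n}^{-}M\in {\add}\,M$.

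The main obstacle is the translation of $\tau_{n}M\in {\add}\,M$ into the inequality ${\id}\,\Lambda\leq n+1$, which requires a careful application of the $n$-Auslander-Reiten formula; one must also be vigilant to avoid circularity with Theorem 4.11, proved only later. An alternative, more intrinsic route would be to compare the cardinalities of the indecomposable non-projective and non-injective summands of $M$: for a generator-cogenerator these are equal (both equal the total number of indecomposable summands minus the number of simple $A$-modules), and this balance combined with the ``injectivity'' of $\tau_n$ on the stable quotient should force $\tau_{n}^{-}M$ into ${\add}\,M$. Making this counting rigorous would require controlling the fact that $\tau_{n}$ does not in general preserve indecomposability, which is the delicate point.
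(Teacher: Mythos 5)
Your main route, passing through the Iyama--Solberg correspondence, is correct and genuinely different from the paper's. It works: $A\in\add M$ gives $M\text{-codim}\,A=0$, so Theorem 3.3 makes $I=\Hom_A(A,M)$ projective-injective with $I\text{-}\dom\,\Lambda\geq n+1$, hence $\dom\,\Lambda\geq n+1$; the exact sequence of Lemma 4.5 ends in $D\Hom_A(M,\tau_nM)$, which is injective because $\tau_nM\in\add M$, giving $\id\,\Lambda\leq n+1$; Proposition 3.1 with $m=0$ identifies $\add I$ with the projective-injectives, so $\Lambda$ is $n$-minimal Auslander--Gorenstein with maximal projective-injective summand $I$, and the Iyama--Solberg theorem returns $\Hom_\Lambda(\Lambda,I)\cong M$ as an $n$-precluster tilting $A$-module.

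The paper instead makes rigorous exactly the counting argument you sketch as an alternative. The delicate point you flag -- that $\tau_n$ need not preserve indecomposability -- is handled by restricting to the subcategories on which it is an equivalence: since $\Ext^i_A(M,M)=0$ for $1\leq i\leq n-1$ and $A\in\add M$, one has $\Ext^i_A(M,A)=0$, so $M$ (modulo projectives) lies in $\underline{^{\perp_{n-1}}A}$, and $\tau_n\colon\underline{^{\perp_{n-1}}A}\to\overline{DA^{\perp_{n-1}}}$ is an equivalence there, hence preserves indecomposability and numbers of summands. The paper then computes $|\tau_nM|+|DA|=|\tau_n(M/A)|+|A|=|M/A|+|A|=|M|$; since $\tau_nM\oplus DA\in\add M$ and $\add\tau_nM\cap\add DA=0$, this forces $\add M=\add(\tau_nM\oplus DA)$, whence $\tau_n^-M\in\add\bigl(\tau_n^-(\tau_nM\oplus DA)\bigr)=\add(M/A)\subseteq\add M$. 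Comparing the two approaches: yours is heavier (it invokes Theorem 3.3, Lemma 4.5, Proposition 3.1, and the full Iyama--Solberg correspondence as a black box) but avoids bare-hands bookkeeping; the paper's is shorter, elementary, and keeps Proposition 4.3 logically independent of the machinery developed later in Section 4.
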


\begin{proof}
If $M$ is an $n$-precluster tilting module, then $A\in{\add}\,M$ according to the definition of $n$-precluster tilting modules.

Conversely, assume $A\in{\add}\,M$. We only need to prove $\tau_{n}^{-}M\in{\add}\,M$. Since ${\Ext}^{i}_{A}(M,M)=0$ for $1\leq i \leq n-1$, $M\in \underline{^{\perp_{n-1}}A}\cap\overline{DA^{\perp_{n-1}}}$ holds. For an $A$-module $X$, denote by $|X|$ the number of pairwise non-isomorphic indecomposable direct summands of $X$. Note that $\tau_{n}:\underline{^{\perp_{n-1}}A}\rightarrow \overline{DA^{\perp_{n-1}}}$ and $\tau_{n}^{-}:\overline{DA^{\perp_{n-1}}}\rightarrow \underline{^{\perp_{n-1}}A}$ are equivalences. Then we have $|\tau_{n}M|+|DA|=|\tau_{n}(M/A)|+|A|=|M/A|+|A|=|M|$. Because $\tau_{n}M\oplus DA\in {\add}\,M$ and ${\add}\,\tau_{n}M\cap{\add}\,DA=0$, we know ${\add}\,M={\add}\,(\tau_{n}M\oplus DA)$. Thus $\tau_{n}^{-}M\in{\add}(\tau_{n}^{-}(\tau_{n}M\oplus DA))={\add}\,M/A\subseteq{\add}\,M$.
\end{proof}

\begin{remark}\rm
If $N$ is an $n$-precluster tilting $A$-module, we have $\tau_{n}^{-}N\in{\add}\,N$ and ${\Ext}^{i}_{A}(N,A)=0$ for $1\leq i \leq n-1$. For an almost $n$-precluster tilting $A$-module $M$,

$(1)$ $\tau_{n}^{-}M\in{\add}\,M$ does not necessarily hold. Let $A$ be the algebra given by the quiver
\begin{scriptsize}
\xymatrix{
                & 1 \ar[dl]_{\alpha} &            \\
 2  \ar[rr]^{\beta} & &     3\ar[ul]_{\gamma}       }
\end{scriptsize}
and the relations $\gamma\beta\alpha=\alpha\gamma\beta=0$.
Take $M=1\oplus{3\atop1}\oplus DA$. It is easily checked that $M$ is an almost 2-precluster tilting module. However, $\tau_{2}^{-}1={1 \atop 2}\notin {\add}\,M$.

$(2)$ If $n\geq3$, we have ${\Ext}^{i}_{A}(M,A)=0$ for $2\leq i \leq n-1$. This is obvious if $A\in {\add}\,M$. Otherwise, let $0 \rightarrow A \rightarrow M_{0} \rightarrow M_{1} \rightarrow 0$ be a minimal $M$-coresolution of $A$. Applying ${\Hom}_{A}(M,-)$ yields exact sequences ${\Ext}^{i-1}_{A}(M,M_{1})\rightarrow {\Ext}^{i}_{A}(M,A)\rightarrow {\Ext}^{i}_{A}(M,M_{0})$ for $2\leq i \leq n-1$. Thus we have ${\Ext}^{i}_{A}(M,A)=0$ for $2\leq i \leq n-1$.
\end{remark}

The following lemma is similar to a result which is contained in the proof of \cite[Proposition 3.9]{CS}. For the convenience of readers, we give a proof here.

\begin{lemma}
Let $M$ be an $A$-module and $\cdots \rightarrow P_{n} \rightarrow P_{n-1}\rightarrow \cdots \rightarrow P_{0} \rightarrow M \rightarrow 0$ be a minimal projective resolution of $M$. If ${\Ext}^{i}_{A}(M,M)=0$ for $1\leq i \leq n-1$, then $0\rightarrow {\Hom}_{A}(M,M) \rightarrow {\Hom}_{A}(P_{0},M)\rightarrow \cdots \rightarrow {\Hom}_{A}(P_{n-1},M)\rightarrow {\Hom}_{A}(P_{n},M)\rightarrow D{\Hom}_{A}(M,\tau_{n}M)\rightarrow 0$ is exact.
\end{lemma}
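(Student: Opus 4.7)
I would split the claimed sequence into two pieces and glue them together. First, applying ${\Hom}_A(-,M)$ to the minimal projective resolution of $M$ gives a complex whose $i$-th cohomology is ${\Ext}^i_A(M,M)$. By hypothesis these groups vanish for $1\leq i\leq n-1$, so the partial complex
\[ 0\to{\Hom}_A(M,M)\to{\Hom}_A(P_0,M)\to\cdots\to{\Hom}_A(P_{n-1},M)\to{\Hom}_A(P_n,M) \]
is exact at every term except possibly at ${\Hom}_A(P_n,M)$. Hence the entire problem reduces to identifying the cokernel of ${\Hom}_A(P_{n-1},M)\to{\Hom}_A(P_n,M)$ with $D{\Hom}_A(M,\tau_n M)$, with the surjection compatible with the differentials.

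For this identification I would use the Nakayama functor $\nu=D{\Hom}_A(-,A)$. Minimality of the resolution forces $P_n\to P_{n-1}\to\Omega^{n-1}M\to 0$ to be a minimal projective presentation of $\Omega^{n-1}M$, so applying ${\Hom}_A(-,A)$ and then $D$ produces, via the Auslander--Bridger transpose construction, the four-term exact sequence
\[ 0\to\tau_n M\to\nu P_n\to\nu P_{n-1}\to\nu\Omega^{n-1}M\to 0. \]
Applying the left exact functor ${\Hom}_A(M,-)$ to the initial piece $0\to\tau_n M\to\nu P_n\to\nu P_{n-1}$ yields a left exact sequence, and applying the exact contravariant functor $D$ turns it into a right exact sequence with $D{\Hom}_A(M,\tau_n M)$ on the right.

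Next I would invoke the natural isomorphism ${\Hom}_A(M,\nu P)\cong D{\Hom}_A(P,M)$, valid for any module $M$ and any finitely generated projective $P$; this follows from tensor-hom adjunction via
\[ {\Hom}_A(M,D{\Hom}_A(P,A))\cong D({\Hom}_A(P,A)\otimes_A M)\cong D{\Hom}_A(P,M). \]
Substituting this isomorphism and using $DD\cong\id$, the dualised sequence becomes
\[ {\Hom}_A(P_{n-1},M)\to{\Hom}_A(P_n,M)\to D{\Hom}_A(M,\tau_n M)\to 0, \]
and naturality in $P$ ensures that the first arrow is precisely ${\Hom}_A(d,M)$ for the resolution's differential $d:P_n\to P_{n-1}$. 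Splicing this short right-exact piece onto the initial exact piece produces the sequence claimed in the lemma.

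The main technical point, and the only genuine obstacle, is the bookkeeping: one must check that the identification ${\Hom}_A(P,M)\cong D{\Hom}_A(M,\nu P)$ is natural in $P$, so that the map induced by $d$ on the left-hand side agrees with the map on the right-hand side coming from the transpose sequence. Granted this naturality, everything else is a routine assembly of standard facts about minimal resolutions, the Nakayama functor, and the $n$-Auslander--Reiten translation $\tau_n=\tau\Omega^{n-1}_A$.
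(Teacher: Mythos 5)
Your proof is correct and follows essentially the same route as the paper: apply ${\Hom}_A(-,M)$ to the minimal projective resolution and use the Ext-vanishing to obtain exactness up to the last term, then identify the cokernel of ${\Hom}_A(P_{n-1},M)\rightarrow{\Hom}_A(P_n,M)$ by passing to $0\rightarrow\tau_n M\rightarrow\nu P_n\rightarrow\nu P_{n-1}$, applying $D{\Hom}_A(M,-)$, and invoking $D{\Hom}_A(M,\nu P)\cong{\Hom}_A(P,M)$. The paper cites this last isomorphism and the three-term sequence directly from Assem--Simson--Skowronski and leaves the naturality implicit; your extra bookkeeping is the same argument made explicit.
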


\begin{proof}
Applying ${\Hom}_{A}(-,M)$ to the minimal projective resolution of $M$, we get an exact sequence $0\rightarrow {\Hom}_{A}(M,M) \rightarrow {\Hom}_{A}(P_{0},M)\rightarrow \cdots \rightarrow {\Hom}_{A}(P_{n-1},M)\rightarrow {\Hom}_{A}(P_{n},M)$ since ${\Ext}^{i}_{A}(M,M)=0$ for $1\leq i \leq n-1$.

Consider the exact sequence $P_{n} \rightarrow P_{n-1}\rightarrow \Omega^{n-1}M \rightarrow 0$. According to \cite[Proposition 2.4 (a), \uppercase\expandafter{\romannumeral4}]{ASS}, there exists an exact sequence $0 \rightarrow \tau\Omega^{n-1}M=\tau_{n}M \rightarrow \upsilon P_{n} \rightarrow \upsilon P_{n-1}$. Applying $D{\Hom}_{A}(M,-)$, we get an exact sequence $D{\Hom}_{A}(M,\upsilon P_{n-1}) \rightarrow D{\Hom}_{A}(M,\upsilon P_{n})\rightarrow D{\Hom}_{A}(M,\tau_{n}M)\rightarrow 0$. Note that $D{\Hom}_{A}(M,\upsilon P_{i})\cong{\Hom}_{A}(P_{i},M)$ for $i=n-1,\,n$. Thus ${\Hom}_{A}(P_{n-1},M)\rightarrow {\Hom}_{A}(P_{n},M)\rightarrow D{\Hom}_{A}(M,\tau_{n}M)\rightarrow 0$ is exact and this completes our proof.
\end{proof}

For an almost $n$-precluster tilting module $M$, according to its definition, we have $\tau_{n}M\oplus DA\in{\add}\,M$. Moreover we give the following result.

\begin{prop}
Let $M$ be an almost $n$-precluster tilting $A$-module. Then we have ${\add}\,M={\add}(\tau_{n}M\oplus DA)$.
\end{prop}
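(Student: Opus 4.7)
The inclusion $\add\,(\tau_n M\oplus DA)\subseteq\add\,M$ is immediate from conditions (1) and (3) of Definition 4.2. For the reverse, my plan is a counting argument on pairwise non-isomorphic indecomposable summands, in the spirit of the proof of Proposition 4.3.

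Write the basic form of $M$ as $M=M'\oplus DA$, where $M'$ collects the non-injective indecomposable summands. Since $\tau_n=\tau\Omega^{n-1}$ takes values in non-injective modules (modulo zero), $\tau_n M$ and $DA$ share no indecomposable summand, so $|\tau_n M\oplus DA|=|\tau_n M|+|DA|$. Combined with $\tau_n M\oplus DA\in\add\,M$, this yields $|\tau_n M|\leq |M'|$, and it remains to prove the reverse inequality $|\tau_n M|\geq |M'|$; equality would then force the basic modules to coincide, and therefore $\add\,M=\add\,(\tau_n M\oplus DA)$.

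For the reverse inequality I would invoke the quasi-inverse equivalence $\tau_n^{-}\colon\overline{DA^{\perp_{n-1}}}\to\underline{{}^{\perp_{n-1}}A}$ of $\tau_n$. Since $DA\in\add\,M$ and $\Ext^{i}_{A}(M,M)=0$ for $1\leq i\leq n-1$, we have $\Ext^{i}_{A}(DA,M)=0$ in the same range, so every non-injective indecomposable summand $N$ of $M'$ lies in the domain of $\tau_n^{-}$, with $\tau_n\tau_n^{-}N\cong N$ in the costable category. It therefore suffices to exhibit $\tau_n^{-}N\in\add\,M$ for each such $N$: this will give $N\cong\tau_n\tau_n^{-}N\in\add\,\tau_n M$, and summing over $N$ yields $|\tau_n M|\geq|M'|$ as required.

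The main obstacle is the membership $\tau_n^{-}N\in\add\,M$. I expect to derive it from condition (4), $M\text{-codim}\,A\leq 1$: the minimal $M$-coresolution $0\to A\to M_0\to M_1\to 0$, combined with Lemma 4.5 applied to a minimal projective resolution of $M$, should give enough control over the higher syzygies of $M$ to place $\tau_n^{-}N$ explicitly among the indecomposable summands of $M$. Equivalently, the codimension bound and the Ext-vanishing in Definition 4.2 should force the injection $X\mapsto\tau_n X$ from indecomposable summands of $M$ lying in ${}^{\perp_{n-1}}A$ (modulo projectives) into the non-injective indecomposable summands of $M$ to be surjective, which is exactly the membership needed to complete the count.
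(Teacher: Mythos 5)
Your easy direction and the reduction to the inequality $|\tau_n M|\geq|M'|$ are fine (and you are right that $\tau_n M$, being in the image of $\tau$, has no injective summands, so the count $|\tau_n M\oplus DA|=|\tau_n M|+|DA|$ holds). The gap is in the hard direction, and it is not one that can be filled: the membership $\tau_n^{-}N\in\add M$ for every non-injective indecomposable summand $N$ of $M$ is \emph{false} in general. The paper's own Remark 4.4(1) gives an explicit almost $2$-precluster tilting module $M=1\oplus{3\atop 1}\oplus DA$ with $\tau_2^{-}1={1\atop 2}\notin\add M$, and ${1\atop 2}$ is not projective either, so the ``modulo projectives'' caveat in your last paragraph does not rescue it. The underlying reason is that while $\tau_n M\oplus DA\in\add M$, the indecomposable $N'\in\add M$ with $\tau_n N'\cong N$ need not lie in ${}^{\perp_{n-1}}A$, so $\tau_n^{-}\tau_n N'$ need not return $N'$, and $\tau_n^{-}N$ need not land back in $\add M$. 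Your final reformulation (surjectivity of $X\mapsto\tau_n X$ from the summands of $M$ in ${}^{\perp_{n-1}}A$) has the same defect.

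The paper proves the reverse containment by moving to $\Lambda=\End_A M$. Using Lemma 4.5 it builds the resolution $0\to\Lambda\to\Hom_A(P_0,M)\to\cdots\to\Hom_A(P_n,M)\to D\Hom_A(M,\tau_n M)\to 0$ with the middle terms in $\add{}_\Lambda I$, concludes that $D\Hom_A(M,\tau_n M)$ has finite projective dimension, and hence that the basic module $N$ with $\add N=\add\bigl(I\oplus D\Hom_A(M,\tau_n M)\bigr)$ is a tilting $\Lambda$-module; this gives $|N|=|\Lambda|=|M|$, while $|N|=|DA\oplus\tau_n M|$ via the equivalence $\Hom_A(M,-)\colon\add M\to\mathcal{P}(\Lambda^{\op})$. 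That counting argument avoids ever applying $\tau_n^{-}$ to summands of $M$, which is precisely where your plan breaks. If you want to stay closer to your framework, you would need to replace ``$\tau_n^{-}N\in\add M$'' with an argument on the $\Lambda$-side exhibiting enough summands of $\tau_n M$ directly.
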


\begin{proof}
Let $\Lambda={\End}_{A}M$ and $_{\Lambda}I={\Hom}_{A}(A,M)$. According to Theorem 3.3, $I$ is an injective $\Lambda$-module with ${\pd}_{\Lambda}I\leq 1$. It is known that the functor ${\Hom}_{A}(M,-)$ induces an equivalence between ${\add}\,M$ and the subcategory $\mathcal{P}(\Lambda_{\Lambda})$ of all projective right $\Lambda$-modules. Since $\tau_{n}M\oplus DA \in {\add}\,M$, we have $|\tau_{n}M\oplus DA|=|{\Hom}_{A}(M,\tau_{n}M\oplus DA )|$.
By Lemma 4.5, we get the following injective resolution of $\Lambda$
$$0\rightarrow \Lambda \rightarrow {\Hom}_{A}(P_{0},M)\rightarrow \cdots \rightarrow {\Hom}_{A}(P_{n},M)\rightarrow D{\Hom}_{A}(M,\tau_{n}M)\rightarrow 0$$ with ${\Hom}_{A}(P_{i},M)\in{\add}\,_{\Lambda}I$ for $0\leq i \leq n$.
This implies that the injective $\Lambda$-module $D{\Hom}_{A}(M,\tau_{n}M)$ has finite projective dimension. Let $N$ be a basic $\Lambda$-module such that ${\add}\,N={\add}(I\oplus D{\Hom}_{A}(M,\tau_{n}M))$. $N$ is an injective $\Lambda$-module with finite projective dimension. Moreover, $N$ is a tilting $\Lambda$-module and then $|N|=|\Lambda|=|M|$. On the other hand, since $I\cong D{\Hom}_{A}(M,DA)$, we have $|N|=|I\oplus D{\Hom}_{A}(M,\tau_{n}M)|=|D{\Hom}_{A}(M,DA\oplus \tau_{n}M)|=|DA\oplus \tau_{n}M|$. Then $|DA\oplus \tau_{n}M|=|M|$ and thus we get ${\add}\,M={\add}(\tau_{n}M\oplus DA)$.
\end{proof}

\begin{corollary}
Any almost $1$-precluster tilting $A$-module is a $1$-precluster tilting $A$-module.
\end{corollary}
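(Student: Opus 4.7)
The plan is to reduce the statement to Proposition 4.3 by showing $A\in\text{add}\,M$; once this is established, the almost $1$-precluster tilting module $M$ automatically satisfies the generator condition and therefore qualifies as a $1$-precluster tilting module.

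The heart of the argument is a dimension count. Specialising Proposition 4.6 to $n=1$ (so that $\tau_{1}=\tau$) yields $\text{add}\,M=\text{add}(\tau M\oplus DA)$. Write $|X|$ for the number of pairwise non-isomorphic indecomposable direct summands of an $A$-module $X$, and normalise $\tau M$ so that it has no injective direct summand; this is legitimate because $\tau$ is an equivalence $A\text{-}\underline{\mo}\to A\text{-}\overline{\mo}$. With this convention $\tau M$ and $DA$ share no indecomposable summand, so the equality of additive closures gives $|M|=|\tau M|+|DA|$. On the other hand, $\tau$ is injective on isomorphism classes of indecomposable non-projective modules and annihilates the projective ones, so $|\tau M|$ equals the number of indecomposable non-projective summands of $M$, namely $|M|-p$, where $p$ is the number of pairwise non-isomorphic indecomposable projective direct summands of $M$.

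Combining the two identities gives $p=|DA|$. Since $|DA|$ equals the number of simple $A$-modules, which in turn equals the number of pairwise non-isomorphic indecomposable projective $A$-modules, every indecomposable projective $A$-module is a direct summand of $M$, i.e., $A\in\text{add}\,M$. Proposition 4.3 then finishes the argument. The only delicacy is the careful bookkeeping: one must verify that the chosen representative of $\tau M$ has no injective summand so that the count $|\tau M\oplus DA|=|\tau M|+|DA|$ is genuinely additive, and invoke that the Auslander-Reiten translation biject non-projective indecomposables with non-injective indecomposables. No further homological machinery is required, which explains why the case $n=1$ collapses.
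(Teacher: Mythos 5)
Your proof is correct, and it takes a genuinely more elementary route than the paper's. Both arguments open identically: reduce to showing $A\in\text{add}\,M$ via Proposition 4.3, and then invoke Proposition 4.6 to get $\text{add}\,M=\text{add}(\tau M\oplus DA)$. From there the paper passes to the relative-homology formalism: it computes $\mathcal{I}(F^{M})=\text{add}\,M$ and $\mathcal{P}(F^{M})=\text{add}(\tau^{-}M\oplus A)=\text{add}(M\oplus A)$, and then appeals to \cite[Corollary 1.6(3)]{AS1} for the equality of the counts $|\mathcal{P}(F^{M})|=|\mathcal{I}(F^{M})|$, which forces $A\in\text{add}\,M$. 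You bypass the $F$-projective/$F$-injective machinery and the external citation entirely: you observe directly that $\tau M$ (taken, as usual, without injective summands) shares no summand with $DA$, so $|M|=|\tau M|+|DA|$, and that $\tau$ biject the non-projective indecomposable summands of $M$ onto $|M|-p$ non-injective indecomposables, so $|\tau M|=|M|-p$; comparing gives $p=|DA|$, hence $A\in\text{add}\,M$. The underlying counting fact is really the same in both proofs — $\tau$ induces a bijection between non-projective and non-injective indecomposables — but you make it explicit where the paper hides it inside the cited corollary, which makes your version more self-contained. One small point of phrasing: the absence of injective summands in $\tau M$ is better attributed to the standard definition of $\tau$ via the minimal projective presentation (equivalently, $\tau M=D\mathrm{Tr}\,M$) than to the fact that $\tau$ is a stable equivalence; but this does not affect the validity of the argument.
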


\begin{proof}
Let $M$ be an almost $1$-precluster tilting $A$-module. According to Proposition 4.3, we only need to prove $A\in{\add}\,M$. First we have ${\add}\,M={\add}(\tau M\oplus DA)$ by Proposition 4.6. Then consider the additive sub-bifunctor $F^{M}$ of ${\Ext}^{1}_{A}(-,-)$. We have $\mathcal{I}(F^{M})={\add}\,M$ and $\mathcal{P}(F^{M})={\add}(\tau^{-}M\oplus A)={\add}(\tau^{-}\tau M\oplus A)={\add}(M\oplus A)$. According to \cite[Corollary 1.6(3)]{AS1}, $|M|=|M\oplus A|$ holds and thus $A\in {\add}\,M$.
\end{proof}

The above corollary tells us that almost 1-precluster tilting modules coincide with 1-precluster tilting modules.

\subsection{The correspondence between almost n-precluster tilting modules and almost n-minimal Auslander-Gorenstein algebras}

In this subsection, we characterize almost $n$-minimal Auslander-Gorenstein algebras as the endomorphism algebras of almost $n$-precluster tilting modules.

First we show that given an almost $n$-precluster tilting module, we can construct an almost $n$-minimal Auslander-Gorenstein algebra.

\begin{thm}
Let $A$ be an Artin algebra, $M$ be an almost $n$-precluster tilting $A$-module $(n\geq 1)$ and $\Lambda={\End}_{A}M$. Then we have the following.

$(1)$ $_{\Lambda}I={\Hom}_{A}(A,M)$ is an injective $\Lambda$-module with ${\pd}_{\Lambda}I\leq1$ and $A\cong{\End}_{\Lambda}I$.

$(2)$ $\Lambda$ is an almost $n$-minimal Auslander-Gorenstein algebra with ${\id}\,\Lambda= n+1 = I\text{-}{\dom}\,\Lambda$.
\end{thm}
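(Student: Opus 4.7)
The plan is to reduce part (1) immediately to Theorem 3.3, and then bound the injective dimension of $\Lambda$ by constructing an explicit injective resolution via Lemma 4.5.

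For part (1), I would observe that an almost $n$-precluster tilting module $M$ satisfies precisely the three hypotheses of Theorem 3.3 with $m=1$: $DA\in {\add}\,M$ because $M$ is a cogenerator (condition (1) of Definition 4.2), ${\Ext}^{i}_{A}(M,M)=0$ for $1\leq i \leq n-1$ (condition (2)), and $M$-codim $A \leq 1$ (condition (4)). Theorem 3.3 then delivers all of (1) together with $I\text{-}{\dom}\,\Lambda \geq n+1$, which is the relative dominant dimension half of (2).

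The substantive work in part (2) is bounding ${\id}\,\Lambda$ from above by $n+1$. For this, take a minimal projective resolution $\cdots \to P_1 \to P_0 \to M \to 0$ and feed it into Lemma 4.5; the vanishing ${\Ext}^{i}_{A}(M,M)=0$ for $1\leq i\leq n-1$ produces the exact sequence
$$0\to \Lambda \to {\Hom}_{A}(P_0,M)\to \cdots \to {\Hom}_{A}(P_n,M) \to D{\Hom}_{A}(M,\tau_{n}M)\to 0.$$
The first $n+1$ terms lie in ${\add}\,I$ since each $P_j\in {\add}\,A$. The key point, which uses condition (3) of Definition 4.2, is that $\tau_n M \in {\add}\,M$, so ${\Hom}_{A}(M,\tau_n M)$ is a projective right $\Lambda$-module (a summand of some $\Lambda^k$), and therefore its Matlis dual is an injective left $\Lambda$-module. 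Hence the displayed sequence is an injective resolution of $\Lambda$ of length at most $n+1$, yielding ${\id}\,\Lambda\leq n+1$.

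To conclude, if $I_{\max}$ denotes the direct sum of all non-isomorphic indecomposable injective $\Lambda$-modules with projective dimension at most $1$, then ${\add}\,I\subseteq {\add}\,I_{\max}$ (since ${\pd}_{\Lambda}I\leq 1$ by part (1)), so $I_{\max}\text{-}{\dom}\,\Lambda \geq I\text{-}{\dom}\,\Lambda \geq n+1$, which together with ${\id}\,\Lambda\leq n+1$ verifies the definition of almost $n$-minimal Auslander-Gorenstein. The equalities ${\id}\,\Lambda = n+1 = I\text{-}{\dom}\,\Lambda$ should follow by squeezing: $I\text{-}{\dom}\,\Lambda$ is trivially bounded by the length of the minimal injective resolution, and both are now pinned between $n+1$ and $n+1$ once one checks that the resolution above cannot be truncated. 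I expect the main obstacle to lie in precisely this sharpness claim (essentially ruling out the degenerate $1$-Gorenstein scenario flagged in the paragraph preceding Definition 4.2 for $n\geq 1$); the inequalities themselves are formal consequences of Theorem 3.3 and Lemma 4.5.
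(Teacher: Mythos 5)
Your argument follows the paper's own proof step for step: part~(1) and the bound $I\text{-}{\dom}\,\Lambda \geq n+1$ are obtained from Theorem~3.3 applied with $m=1$, while ${\id}\,\Lambda \leq n+1$ follows from Lemma~4.5 together with condition~(3) of Definition~4.2, which makes $D{\Hom}_A(M,\tau_n M)$ an injective $\Lambda$-module.

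Your hesitation about the sharp equalities ${\id}\,\Lambda = n+1 = I\text{-}{\dom}\,\Lambda$ is well placed, but it is not a defect of your write-up relative to the paper: the paper's proof of Theorem~4.8 also stops after establishing ${\id}\,\Lambda \leq n+1 \leq I\text{-}{\dom}\,\Lambda$ and never addresses sharpness. Indeed the equalities can fail. Take $A$ semisimple and $M$ any cogenerator; every condition of Definition~4.2 holds trivially, yet $\Lambda = {\End}_A M$ is self-injective with ${\id}\,\Lambda = 0$. This is exactly the $1$-Gorenstein degenerate case acknowledged in the paragraph preceding Definition~4.2, so the theorem is best read as asserting only the definitional inequalities. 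One further small point: to identify your $I = {\Hom}_A(A,M)$ with the canonical $I$ of Definition~4.1, the paper invokes Proposition~3.1 (applicable since $m=1\leq n$) to obtain that ${\add}\,I$ equals, not merely sits inside, the additive closure of all injectives of projective dimension at most~$1$. Your one-sided containment suffices for the dominant-dimension inequality, but the equality of additive closures is what lets one apply Definition~4.1 to the very module $I$ produced in part~(1).
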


\begin{proof}
$(1)$ follows from Theorem 3.3. Moreover, we have $I\text{-}{\dom}\,\Lambda\geq n+1$ and ${\add}\,I$ contains all injective $\Lambda$-modules with projective dimension at most 1 according to Proposition 3.1. By Lemma 4.5, there exists an exact sequence $0\rightarrow \Lambda \rightarrow {\Hom}_{A}(P_{0},M)\rightarrow \cdots \rightarrow {\Hom}_{A}(P_{n-1},M)\rightarrow {\Hom}_{A}(P_{n},M)\rightarrow D{\Hom}_{A}(M,\tau_{n}M)\rightarrow 0 $
with ${\Hom}_{A}(P_{i},M)\in {\add}\,I$ for  $0\leq i \leq n$. Since $\tau_{n}M\in {\add}\,M$, $D{\Hom}_{A}(M,\tau_{n}M)$ is an injective $\Lambda$-module. This implies that ${\id}\,\Lambda\leq n+1$. Thus $\Lambda$ is an almost $n$-minimal Auslander-Gorenstein algebra.
\end{proof}

Conversely, an almost $n$-minimal Auslander-Gorenstein algebra can give rise to an almost $n$-precluster tilting module.

\begin{thm}
Let $\Lambda$ be an almost $n$-minimal Auslander-Gorenstein algebra with ${\id}\,\Lambda= n+1 = I\text{-}{\dom}\,\Lambda$ $(n\geq2)$ and $A={\End}_{\Lambda}I$. Then we have the following.

$(1)$ $_{A}M={\Hom}_{\Lambda}(\Lambda,I)$ is an almost $n$-precluster tilting $A$-module.

$(2)$ $\Lambda\cong{\End}_{A}M$.
\end{thm}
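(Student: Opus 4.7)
For part $(2)$ and for conditions $(1)$, $(2)$, and $(4)$ of Definition 4.2, my plan is simply to invoke Theorem 3.6 with $m=1$: since ${\pd}_{\Lambda}\,I\leq 1$ by the very definition of $I$, that theorem immediately gives that $M$ is a cogenerator with $M$-${\rm codim}\,A\leq 1$, that ${\Ext}^{i}_{A}(M,M)=0$ for $1\leq i\leq n-1$, and that $\Lambda\cong{\End}_{A}M$. The whole content of Theorem 4.9 therefore reduces to verifying condition $(3)$ of Definition 4.2, namely $\tau_{n}M\in{\add}\,M$.

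For condition $(3)$ my plan is to first obtain a closed-form expression for $\tau_{n}M$ and then identify it as a summand of $M$ via the Nakayama functor on $\Lambda$-mod. Applying the exact contravariant functor ${\Hom}_{\Lambda}(-,I)$ to the minimal injective resolution $0\to\Lambda\to I_{0}\to\cdots\to I_{n}\to I_{n+1}\to 0$ produces an exact sequence in $A$-mod whose first $n+1$ Hom-terms ${\Hom}_{\Lambda}(I_{j},I)$ ($0\leq j\leq n$) are projective over $A$; this gives a possibly non-minimal projective presentation ${\Hom}_{\Lambda}(I_{n},I)\to{\Hom}_{\Lambda}(I_{n-1},I)\to\Omega^{n-1}_{A}M\to 0$. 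Using the natural isomorphism ${\Hom}_{A}({\Hom}_{\Lambda}(I_{j},I),A)\cong{\Hom}_{\Lambda}(I,I_{j})$ for $I_{j}\in{\add}\,I$, I get $\upsilon_{A}{\Hom}_{\Lambda}(I_{j},I)\cong D{\Hom}_{\Lambda}(I,I_{j})$, so that $\tau_{n}M$ is the kernel of $D{\Hom}_{\Lambda}(I,I_{n})\to D{\Hom}_{\Lambda}(I,I_{n-1})$ up to injective $A$-summands (which lie harmlessly in ${\add}\,DA\subseteq{\add}\,M$). A standard dimension-shift along the cosyzygies of $\Lambda$, using ${\pd}_{\Lambda}I\leq 1$ to kill ${\Ext}^{k}_{\Lambda}(I,-)$ for $k\geq 2$, identifies the cokernel of ${\Hom}_{\Lambda}(I,I_{n-1})\to{\Hom}_{\Lambda}(I,I_{n})$ with ${\Hom}_{\Lambda}(I,I_{n+1})$; dualising gives $\tau_{n}M\cong D{\Hom}_{\Lambda}(I,I_{n+1})$ modulo ${\add}\,DA$.

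The main obstacle is then recognising $D{\Hom}_{\Lambda}(I,I_{n+1})$ as a module in ${\add}\,M$. This is delicate precisely because $I_{n+1}$ need not lie in ${\add}\,I$: the hypothesis $I\text{-}{\dom}\,\Lambda=n+1$ only forces $I_{0},\dots,I_{n}\in{\add}\,I$, and in general ${\pd}_{\Lambda}I_{n+1}$ can exceed $1$. To bypass this I plan to use the Nakayama functor $\upsilon_{\Lambda}=D{\Hom}_{\Lambda}(-,\Lambda)$ on $\Lambda$-mod, which restricts to an equivalence between indecomposable projective and indecomposable injective $\Lambda$-modules; write $I_{n+1}\cong\upsilon_{\Lambda}P$ for some finitely generated projective $\Lambda$-module $P$. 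Combining the tensor-Hom adjunction ${\Hom}_{\Lambda}(I,D{\Hom}_{\Lambda}(P,\Lambda))\cong D({\Hom}_{\Lambda}(P,\Lambda)\otimes_{\Lambda}I)$ with the projectivity isomorphism ${\Hom}_{\Lambda}(P,\Lambda)\otimes_{\Lambda}I\cong{\Hom}_{\Lambda}(P,I)$ yields the Nakayama-type identity $D{\Hom}_{\Lambda}(I,\upsilon_{\Lambda}P)\cong{\Hom}_{\Lambda}(P,I)$. Finally, ${\Hom}_{\Lambda}(P,I)$ is manifestly a direct summand of $M={\Hom}_{\Lambda}(\Lambda,I)$ --- namely, the summand picked out by any idempotent $e\in{\End}_{A}M=\Lambda$ with $\Lambda e\cong P$ --- so ${\Hom}_{\Lambda}(P,I)\in{\add}\,M$. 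Chasing back, $\tau_{n}M\in{\add}\,M$, which completes the verification of $(3)$ and thus the whole theorem.
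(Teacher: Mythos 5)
Your proof is correct, and it follows a genuinely different route from the paper's. After both of you reduce (via Theorem 3.6) to checking condition $(3)$, namely $\tau_n M\in\add M$, the paper reaches this through relative cotilting machinery: it invokes \cite[Theorem 3.17]{AT} to produce the $2$-cotilting module $U=\ker f_n\oplus I$ with dualizing summand $I$, applies Theorem 2.3 to conclude $M$ is $F_{\Hom_\Lambda(U,I)}$-cotilting with ${\id}_F M=0$, so that $\mathcal{I}(F)=\add M$, and then compares this with the description $\mathcal{I}(F)=\add(\tau\Hom_\Lambda(U,I)\oplus DA)$ to extract $\tau\Hom_\Lambda(U,I)\in\add M$; finally it identifies $\tau\Hom_\Lambda(U,I)$ with $\tau_n M$ by a syzygy calculation. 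You instead compute $\tau_n M$ head-on: you push the projective presentation $\Hom_\Lambda(I_n,I)\to\Hom_\Lambda(I_{n-1},I)$ of $\Omega^{n-1}_A M$ through $\upsilon_A$ using the isomorphism $\upsilon_A\Hom_\Lambda(I_j,I)\cong D\Hom_\Lambda(I,I_j)$, identify the relevant cokernel with $\Hom_\Lambda(I,I_{n+1})$ via the vanishing $\Ext^1_\Lambda(I,\text{cosyzygy})=0$ (which does indeed follow for $n\geq 2$ from $\Ext^1(I,I_j)=0$ and $\Ext^2(I,-)=0$, since ${\pd}_\Lambda I\leq 1$), and then use the Nakayama-type identity $D\Hom_\Lambda(I,\upsilon_\Lambda P)\cong\Hom_\Lambda(P,I)\in\add M$. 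Your approach avoids both Theorem 2.3 and \cite[Theorem 3.17]{AT}, making it more elementary and self-contained; it also yields the explicit formula $\tau_n M\cong\Hom_\Lambda(P,I)$ modulo injectives, where $I_{n+1}\cong\upsilon_\Lambda P$. The paper's approach is shorter given the machinery already set up in Section 2. One point worth spelling out in a final write-up: because the projective presentation you use is not assumed minimal, $\ker(\upsilon_A P_n\to\upsilon_A P_{n-1})$ is $\tau_n M$ plus an injective summand lying in $\add DA\subseteq\add M$, so the conclusion $\tau_n M\in\add M$ does follow, but this direction of the ``up to injectives'' discrepancy should be stated.
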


\begin{proof}
By Theorem 3.6, we only need to prove $\tau_{n}M\in{\add}\,M$. Let $0\rightarrow \Lambda \xrightarrow{f_{0}}I_{0}\xrightarrow{f_{1}} \cdots \xrightarrow{f_{n}}I_{n}\xrightarrow{f_{n+1}}I_{n+1}\rightarrow 0$ with $I_{j}\in{\add}\,I$ for $0\leq j \leq n$ be a minimal injective resolution of $\Lambda$ and $U=\text{ker}f_{n}\oplus I$. Then $U$ is a cotilting $\Lambda$-module with ${\id}\,U=2$ according to \cite[Theorem 3.17]{AT} and $I$ is a dualizing summand of $U$. Consider the additive sub-bifunctor $F=F_{{\Hom}_{\Lambda}(U,I)}$ of ${\Ext^{1}_{A}(-,-)}$. By Theorem 2.3, $M={\Hom}_{\Lambda}(\Lambda,I)$ is an $F$-cotilting $A$-module with ${\id}_{F}M=0$. Then we have $\mathcal{I}(F)={\add}\,M$. On the other hand, $\mathcal{I}(F)={\add}(\tau{\Hom}_{\Lambda}(U,I)\oplus DA)$ holds. Thus we get $\tau{\Hom}_{\Lambda}(U,I)\in{\add}\,M$.

Consider the exact sequence $0\rightarrow \Lambda \rightarrow I_{0}\cdots \rightarrow I_{n-2}\rightarrow \text{ker}f_{n}\rightarrow 0$. Since $I$ is injective, applying ${\Hom}_{\Lambda}(-,I)$ yields the following exact sequence
$$0\rightarrow {\Hom}_{\Lambda}(\text{ker}f_{n},I)\rightarrow {\Hom}_{\Lambda}(I_{n-2},I)\rightarrow \cdots {\Hom}_{\Lambda}(I_{0},I)\rightarrow M\rightarrow 0$$ where ${\Hom}_{\Lambda}(I_{j},I)$ are projective $A$-modules for $0\leq j \leq n-2$. Then we have $\tau{\Hom}_{\Lambda}(U,I)=\tau{\Hom}_{\Lambda}(\text{ker}f_{n},I)=\tau\Omega^{n-1}M=\tau_{n}M$ and this completes our proof.
\end{proof}

\begin{remark}\rm
In the above theorem we assume $n\geq2$ to make sure the existence of a 2-cotilting $\Lambda$-module $U$ with a dualizing summand $I$. When $n=1$, $\Lambda$ is an almost $2$-minimal Auslander-Gorenstein algebra. By Theorem 3.6 and Lemma 4.5, $M$ satisfies (i) $DA\in {\add}\,M$ and $M\text{-codim}\,A\leq1$, (ii) $\Lambda\cong{\End}_{A}M$ and (iii) ${\Hom}_{A}(M,\tau M)$ is a projective right $\Lambda$-module. Conversely given an $A$-module $M$ satisfying (i),(ii) and (iii), it is easy to check that $\Lambda={\End}_{A}M$ is an almost 2-minimal Auslander-Gorenstein algebra and $A\cong{\End}_{\Lambda}I$.
\end{remark}

Combining Theorem 4.8 and Theorem 4.9, we give the main result in this section.

\begin{thm}Assume $n\geq2$. There exists a bijection between the equivalence classes of almost n-precluster tilting modules over Artin algebras and the Morita-equivalence classes of almost n-minimal Auslander-Gorenstein algebras.
\end{thm}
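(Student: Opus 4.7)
The plan is to assemble Theorem 4.8 and Theorem 4.9 into mutually inverse maps at the level of equivalence classes. In one direction, define $\Phi$ to send an almost $n$-precluster tilting $A$-module $M$ to the algebra $\Lambda = \End_A M$; by Theorem 4.8(2) this $\Lambda$ is an almost $n$-minimal Auslander-Gorenstein algebra. In the other direction, define $\Psi$ to send an almost $n$-minimal Auslander-Gorenstein algebra $\Lambda$ to the pair $(A, M)$, where $I$ is the distinguished injective of Definition 4.1, $A = \End_\Lambda I$, and $M = \Hom_\Lambda(\Lambda, I)$; Theorem 4.9 (which is where the hypothesis $n \geq 2$ enters, via the existence of a $2$-cotilting $\Lambda$-module with $I$ as a dualizing summand) guarantees that $_A M$ is almost $n$-precluster tilting. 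Since $I$ is intrinsic to $\Lambda$ up to isomorphism, $\Psi$ descends to Morita equivalence classes; symmetrically, $\End_A M$ depends only on $\add M$, so $\Phi$ passes to equivalence classes of modules.

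Next I would verify that the two compositions are identities. For $\Phi \circ \Psi$: starting from $\Lambda$, the pair $\Psi(\Lambda) = (A, M)$ satisfies $\Lambda \cong \End_A M$ by Theorem 4.9(2), so $\Phi(\Psi(\Lambda)) \cong \Lambda$ as algebras. For $\Psi \circ \Phi$: starting from $_A M$, set $\Lambda = \End_A M$. Theorem 3.3(1) identifies $I = \Hom_A(A, M)$ with $_A M$ equipped with its canonical right $\Lambda$-action, and Theorem 3.3(2) supplies a natural isomorphism $A \cong \End_\Lambda I$. Under these identifications the recovered module $\Hom_\Lambda(\Lambda, I) \cong I \cong {}_A M$ coincides with the original, so $\Psi(\Phi(M)) \cong M$ as $A$-modules.

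The hard part, as I see it, is purely bookkeeping: verifying that the canonical isomorphisms involved respect the relevant bimodule structures, so that the recovered object is isomorphic to the original in $A\text{-}\mo$ and not merely as an abelian group. To keep this tidy I would fix once and for all the convention that two almost $n$-precluster tilting modules $_A M$ and $_{A'} M'$ are equivalent iff there is a Morita equivalence $A\text{-}\mo \simeq A'\text{-}\mo$ sending $\add M$ to $\add M'$, and similarly that two almost $n$-minimal Auslander-Gorenstein algebras are identified when Morita equivalent. With these conventions in place, both $\Phi$ and $\Psi$ are manifestly well defined on equivalence classes, and the round-trip computations above yield the required bijection; no new homological input beyond Theorems 4.8 and 4.9 is required.
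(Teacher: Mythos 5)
Your proposal is correct and takes essentially the same approach as the paper, which gives Theorem 4.11 no separate argument beyond the one-line remark ``Combining Theorem 4.8 and Theorem 4.9, we give the main result in this section.'' You simply make explicit the round-trip checks (via Theorem 4.9(2) in one direction and Theorem 3.3 in the other) that the paper leaves implicit.
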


Since the relative dominant dimension of an algebra is not left-right symmetric, the almost $n$-minimal Auslander-Gorenstein algebra is also not left-right symmetric. For an almost $n$-minimal Auslander-Gorenstein algebra $\Lambda$, $\Lambda^{\op}$ is not always an almost $n$-minimal Auslander-Gorenstein algebra. Or equivalently, $J_{\Lambda}\text{-}{\dom}\,\Lambda_{\Lambda}\geq n+1$ does not always hold where ${\add}\,J$ consists of all injective right $\Lambda$-modules with projective dimension at most 1. We give a sufficient condition for $\Lambda$ being left-right symmetric.

\begin{prop}
Assume $n\geq2$. Let $M$ be an almost $n$-precluster tilting module over an Artin algebra $A$ and $0\rightarrow A \rightarrow M_{0}\rightarrow M_{1}\rightarrow 0$ be a minimal $M$-coresolution of $A$. If ${\pd}_{A}(M_{0}\oplus M_{1})\leq 1$, $\Lambda^{\op}=({\End}_{A}M)^{\op}$ is an almost $n$-minimal Auslander-Gorenstein algebra.
\end{prop}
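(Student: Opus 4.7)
The plan is to derive the proposition directly from Proposition 3.8 applied with parameter $m=1$, combined with Proposition 3.1 on the opposite side and the two-sided Gorenstein bound that is already available from Theorem 4.8 and \cite[Proposition 3.14(1)]{AT}.

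First I would verify the hypotheses of Proposition 3.8 with $m=1$. Because $M$ is an almost $n$-precluster tilting module, conditions (i)--(iii) of Theorem 3.3 all hold: $DA\in\add M$ since $M$ is a cogenerator, $\Ext^{i}_{A}(M,M)=0$ for $1\leq i\leq n-1$, and $M$-codim $A\leq 1$. The strict inequality $m<n$ required by Proposition 3.8 becomes $1<n$, which is the standing assumption $n\geq 2$; and the hypothesis $\pd_{A}(M_{0}\oplus M_{1})\leq 1$ supplies the projective-dimension condition on the minimal $M$-coresolution of $A$. Proposition 3.8 thus produces an injective right $\Lambda$-module $J'$ with $\pd_{\Lambda^{\op}}J'\leq 1$ and $J'\text{-}\dom\Lambda_{\Lambda}\geq n+1$.

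Next, let $J$ denote the direct sum of all pairwise non-isomorphic indecomposable injective right $\Lambda$-modules of projective dimension at most $1$; this is the canonical module attached to $\Lambda^{\op}$ in Definition 4.1. Since $\pd_{\Lambda^{\op}}J'\leq 1\leq n$, applying Proposition 3.1 to the opposite algebra $\Lambda^{\op}$ yields that $\add J'$ consists of all injective right $\Lambda$-modules of projective dimension at most $1$; in particular $\add J=\add J'$, and therefore $J\text{-}\dom\Lambda_{\Lambda}\geq n+1$.

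Finally, I would bound the injective dimension on the opposite side. By Theorem 4.8, $\Lambda=\End_{A}M$ is an almost $n$-minimal Auslander-Gorenstein algebra with $\id\Lambda\leq n+1$, and the text recalls that \cite[Proposition 3.14(1)]{AT} guarantees any such algebra is $(n+1)$-Gorenstein. Hence the two-sided bound $\id_{\Lambda^{\op}}\Lambda^{\op}=\id\Lambda_{\Lambda}\leq n+1$ holds, which together with $J\text{-}\dom\Lambda^{\op}\geq n+1$ yields via Definition 4.1 that $\Lambda^{\op}$ is almost $n$-minimal Auslander-Gorenstein. There is no real obstacle here: Proposition 3.8 does all the heavy lifting, and the only subtle step is the bookkeeping that identifies (up to additive closure) the auxiliary injective $J'$ produced by Proposition 3.8 with the canonical module $J$ appearing in Definition 4.1 on the opposite side.
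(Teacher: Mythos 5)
Your proof is correct and follows essentially the same route as the paper: apply Proposition 3.8 with $m=1$ to get the relative dominant dimension bound on the right side, and use the $(n+1)$-Gorenstein property (from Theorem 4.8 together with \cite[Proposition 3.14(1)]{AT}) for the bound on $\id\,\Lambda_{\Lambda}$. The only extra content in your writeup is the bookkeeping step identifying $\add\,J'$ with the canonical $\add\,J$ via Proposition 3.1, a detail the paper leaves implicit; this is valid, though one could also observe more cheaply that $\add\,J'\subseteq\add\,J$ already forces $J\text{-}{\dom}\,\Lambda_{\Lambda}\geq J'\text{-}{\dom}\,\Lambda_{\Lambda}$.
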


\begin{proof}
By Proposition 3.8, we have $J_{\Lambda}\text{-}{\dom}\,\Lambda_{\Lambda}\geq n+1$. Since $\Lambda$ is $(n+1)$-Gorenstein, ${\id}\,\Lambda_{\Lambda}\leq n+1$ holds. Thus $\Lambda^{\op}$ is an almost $n$-minimal Auslander-Gorenstein algebra.
\end{proof}

\subsection{Gorenstein projective modules over almost n-minimal Auslander-Gorenstein algebras}

Let $\Lambda$ be an almost $n$-minimal Auslander-Gorenstein algebra with ${\id}\,\Lambda= n+1 = I\text{-}{\dom}\,\Lambda$ $(n\geq 1)$ and $A={\End}_{\Lambda}I$. We describe the subcategory $\mathcal{GP}(\Lambda)$ of Gorenstein projective $\Lambda$-modules in terms of the corresponding $A$-module $_{A}M={\Hom}_{\Lambda}(\Lambda,I)$.

\begin{thm}Let $F=F^{M}$ be an additive sub-bifunctor of ${\Ext}^{1}_{A}(-,-)$. Then we have the following.

$(1)$ $M$ is an $F$-tilting $A$-module and $M^{\perp_{F}}$ is a Frobenius category with projective-injective objects ${\add}\,M$.

$(2)$ If $n\geq 2$, $M$ is an almost $n$-precluster tilting $A$-module with ${\pd}_{F}M\leq n-1$ and $M^{\perp_{F}}=M^{\perp_{n-1}}$.

$(3)$ ${\Hom}_{A}(-,M)$ induces a duality between $M^{\perp_{F}}$ and $\mathcal{GP}(\Lambda)$. Moreover, $M^{\perp_{F}}/[M]$ and $\underline{\mathcal{GP}(\Lambda)}^{\op}$ are equivalent as triangulated categories.
\end{thm}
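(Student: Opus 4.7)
The backbone is the relative cotilting machinery of Section 2 combined with the fact that an almost $n$-minimal Auslander--Gorenstein algebra $\Lambda$ is $(n+1)$-Gorenstein. From Theorem 4.9 (and Remark 4.10 for $n=1$), $_AM={\Hom}_\Lambda(\Lambda,I)$ is $F$-cotilting with $\Lambda\cong{\End}_AM$; since $\Lambda$ is Gorenstein, Proposition 2.5 upgrades $M$ to an $F$-tilting module and makes $A$ into an $F$-Gorenstein algebra. This is the key leverage for all three parts.

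I would prove (3) first, since it is the heart of the theorem. Applying Theorem 2.2(3) with $T=M$, and using that $M\in\mathcal{I}(F^M)$ forces ${\id}_F M=0$ and hence ${^{\perp_F}}M=A\text{-}{\mo}$, one obtains the functorial isomorphism
\[
{\Ext}^i_F(X,Y)\cong {\Ext}^i_\Lambda({\Hom}_A(Y,M),{\Hom}_A(X,M))
\]
for all $X,Y\in A\text{-}{\mo}$ and $i\geq 0$. Specializing $Y=M$ and using ${\Hom}_A(M,M)=\Lambda$ gives ${\Ext}^i_\Lambda({\Hom}_A(X,M),\Lambda)\cong{\Ext}^i_F(M,X)$, so $X\in M^{\perp_F}$ if and only if ${\Hom}_A(X,M)\in{^\perp}\Lambda=\mathcal{GP}(\Lambda)$, the last equality because $\Lambda$ is Gorenstein. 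Thus ${\Hom}_A(-,M)$ restricts to the claimed duality $M^{\perp_F}\simeq\mathcal{GP}(\Lambda)$, under which ${\add}\,M$ corresponds to ${\add}\,\Lambda$. The Frobenius structure on $M^{\perp_F}$ asserted in (1), with projective-injectives ${\add}\,M$, is then obtained by transport from the Frobenius structure on $\mathcal{GP}(\Lambda)$, and passage to stable categories yields the triangle equivalence $M^{\perp_F}/[M]\simeq\underline{\mathcal{GP}(\Lambda)}^{\op}$.

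For (2), with $n\geq 2$, $M$ is almost $n$-precluster tilting by Theorem 4.9. I would bound ${\pd}_F M\leq n-1$ by dualizing the minimal injective resolution $0\to\Lambda\to I_0\to\cdots\to I_n\to I_{n+1}\to 0$ under ${\Hom}_\Lambda(-,I)$: for $0\leq j\leq n$ the term ${\Hom}_\Lambda(I_j,I)$ lies in ${\add}\,A\subseteq\mathcal{P}(F)$, and ${\Hom}_\Lambda(I_{n+1},I)\in\mathcal{P}(F)={\add}(A\oplus\tau^{-}M)$; $F^M$-exactness at each position is checked using the criterion that a sequence is $F^M$-exact iff it remains exact under ${\Hom}_A(-,N)$ for every $N\in{\add}\,M$. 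The equality $M^{\perp_F}=M^{\perp_{n-1}}$ then follows because ${\pd}_F M\leq n-1$ automatically kills ${\Ext}^i_F(M,-)$ for $i\geq n$, while the same resolution yields ${\Ext}^i_F(M,-)\cong{\Ext}^i_A(M,-)$ in degrees $1\leq i\leq n-1$.

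The step I expect to be hardest is the bookkeeping in (2) that converts the length-$(n+1)$ dualized resolution into an $F$-projective resolution of $M$ of length $n-1$: one must identify which portions are genuinely $F$-projective (as opposed to merely projective) and absorb the two boundary terms using the $F^M$-splitting properties of ${\add}\,M$-approximations, exploiting both $\mathcal{I}(F)={\add}\,M$ and $\mathcal{P}(F)={\add}(A\oplus\tau^-M)$. Once this is in place, the remaining statements follow mechanically from Proposition 2.5 and Theorem 2.2(3).
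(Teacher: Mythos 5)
Your treatment of parts (1) and (3) is sound and tracks the paper's own argument, modulo a reorganization: the paper establishes the Frobenius structure on $M^{\perp_{F}}$ directly (via the dual of \cite[Theorem 3.24]{AS2} to produce $F$-exact approximation sequences by objects of ${\add}\,M$) and only then proves the duality with $\mathcal{GP}(\Lambda)$, whereas you prove the duality first and transport the Frobenius structure across it; both routes use the same engine, namely Theorem 2.2(3) together with $^{\perp_{F}}M=A\text{-}{\rm mod}$ and $\mathcal{GP}(\Lambda)={^{\perp}}\Lambda$, and your transport is legitimate because the $\Ext$-isomorphism of Theorem 2.2(3) in degree $1$ shows that $F$-extensions correspond to ordinary extensions on the $\Lambda$ side. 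Two small cleanups in (3): you should specialize the \emph{first} argument to $M$ (you wrote $Y=M$, which would give the vanishing $\Ext^{i}_{\Lambda}(\Lambda,-)$), and to conclude that the restriction is a duality \emph{onto} $\mathcal{GP}(\Lambda)$, you still need to observe that $\mathcal{GP}(\Lambda)\subseteq{^{\perp}}U$ because $U$ has finite injective dimension; otherwise you have only shown that $M^{\perp_{F}}$ maps \emph{into} $\mathcal{GP}(\Lambda)$.

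The genuine gap is in (2). You propose to read off ${\pd}_{F}M\leq n-1$ from the complex obtained by applying ${\Hom}_{\Lambda}(-,I)$ to the minimal injective resolution $0\to\Lambda\to I_{0}\to\cdots\to I_{n}\to I_{n+1}\to 0$. That dualized resolution has length $n+1$, so even granting that all its terms land in $\mathcal{P}(F)$, you are left with the assertion that it can be shortened by two degrees, which you flag as ``the hardest step'' and never actually carry out. There is no obvious mechanism for ``absorbing the two boundary terms,'' and this is exactly where the argument stalls. The paper's route sidesteps the issue entirely: take a minimal projective resolution $\cdots\to P_{n-1}\to\cdots\to P_{0}\to M\to 0$ of $M$, note that $\mathcal{P}(F)={\add}(\tau^{-}M\oplus A)={\add}(\tau^{-}\tau_{n}M\oplus A)={\add}(\Omega^{n-1}M\oplus A)$ because $\tau_{n}M\in{\add}\,M$, so the truncation $0\to\Omega^{n-1}M\to P_{n-2}\to\cdots\to P_{0}\to M\to 0$ is already an $F$-projective resolution of length $n-1$ (the $F$-exactness of each step follows from ${\Ext}^{i}_{A}(M,M)=0$ for $1\leq i\leq n-1$), and then invoke \cite[Proposition 2.5(1)]{IS} for the comparison ${\Ext}^{i}_{F}(M,-)\cong{\Ext}^{i}_{A}(M,-)$ in degrees $1\leq i\leq n-1$. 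You should replace your plan for (2) with this argument: it is shorter and, unlike yours, it actually closes.
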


\begin{proof}
(1) By Theorem 3.6, $M$ is a cogenerator over $A$ and $\Lambda\cong{\End}_{A}M$. Then we have $\mathcal{I}(F)={\add}(M\oplus DA)={\add}\,M$ and $M$ is an $F$-cotilting $A$-module. Since $\Lambda$ is Gorenstein, according to Proposition 2.4, $M$ is an $F$-tilting $A$-module and $A$ is $F$-Gorenstein. By the dual of \cite[Theorem 3.24]{AS2} and the dual of \cite[Proposition 2.3]{AS2}, there exists an $F$-exact sequence $0\rightarrow Y \rightarrow M_{X}\rightarrow X \rightarrow 0$ with $M_{X}\in{\add}\,M$ for any $X\in M^{\perp_{F}}$. Note that $M$ is a projective object in $M^{\perp_{F}}$. Then $M^{\perp_{F}}$ has enough projective-injective objects ${\add}\,M$ with respect to $F$-exact sequences. Thus $M^{\perp_{F}}$ is a Frobenius category where short exact sequences are precisely $F$-exact sequences. Let $[M]$ be the ideal of morphisms factoring through an object in ${\add}\,M$. It follows that $M^{\perp_{F}}/[M]$ is a triangulated category.

(2) If $n\geq2$, $M$ is an almost $n$-precluster tilting $A$-module by Theorem 4.11. Then we have ${\add}\,M={\add}(\tau_{n}M\oplus DA)$ and $\mathcal{P}(F)={\add}(\tau^{-}M\oplus A)={\add}({\tau^{-}\tau_{n}M}\oplus A)={\add}(\Omega^{n-1}M\oplus A)$. Since ${\Ext}^{i}_{A}(M,M)=0$ for $1\leq i \leq n-1$, the first $n$ terms $0 \rightarrow \Omega^{n-1}M \rightarrow P_{n-2}\rightarrow \cdots \rightarrow P_{1}\rightarrow P_{0} \rightarrow M\rightarrow 0$ of a minimal projective resolution of $M$ is an $F$-projective resolution of $M$. Then ${\pd}_{F}M\leq n-1$ and this implies that $M^{\perp_{F}}=\{X\in A\text{-}{\rm mod}|\,{\Ext}_{F}^{i}(M,X)=0 \, {\rm for}\, 1\leq i \leq n-1\}$. On the other hand, ${\Ext}^{i}_{F}(M,X)={\Ext}^{i}_{A}(M,X)$ holds for $1\leq i \leq n-1$ and any $A$-module $X$ according to \cite[Proposition 2.5(1)]{IS}. Thus we get $M^{\perp_{F}}=M^{\perp_{n-1}}$.

(3) Since $\mathcal{I}(F)={\add}(M\oplus DA)={\add}\,M$, we have $^{\perp_{F}}M=A\text{-}{\mo}$. By Theorem 2.2, ${\Hom}_{A}(-,M)$ induces a duality between $A$-mod and $^{\perp}U$ where $U={\Hom}_{A}(\mathcal{P}(F),M)$ is a cotilting $\Lambda$-module. For a Gorenstein $\Lambda$-module $N$, ${\Ext}^{i}_{\Lambda}(N,U)=0$ holds for $i\geq 1$ since $U$ has finite injective dimension. Thus there exists an $A$-module $L$ such that $N\cong{\Hom}_{A}(L,M)$. According to Theorem 2.2(3), we have ${\Ext}^{i}_{\Lambda}(N,\Lambda)={\Ext}^{i}_{\Lambda}({\Hom}_{A}(L,M),{\Hom}_{A}(M,M))\cong{\Ext}^{i}_{F}(M,L)$ for $i\geq 1$. Note that $\Lambda$ is Gorenstein and $\mathcal{GP}(\Lambda)=^{\perp}$$\Lambda$. Then $N$ is Gorenstein projective if and only if $L\in M^{\perp_{F}}$. Since the functor ${\Hom}_{A}(-,M)$ is a fully faithful, it induces an duality between $M^{\perp_{F}}$ and $\mathcal{GP}(\Lambda)$. Moreover, we get a triangle equivalence between $M^{\perp_{F}}/[M]$ and $\underline{\mathcal{GP}(\Lambda)}^{\op}$.
\end{proof}

It is known that a Gorenstein algebra has finite global dimension if and only if every Gorenstein projective module is projective. Immediately we get the following characterization of almost $n$-Auslander algebras.

\begin{corollary}
Keep the notations in Theorem {\rm 4.13}. Then the following are equivalent.

$(1)$ $\Lambda$ is an almost $n$-Auslander algebra.

$(2)$ $\Lambda$ has finite global dimension.

$(3)$ $M^{\perp_{F}}={\add}\,M$
\end{corollary}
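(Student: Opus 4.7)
The plan is to prove $(1)\Leftrightarrow(2)$ by a Gorenstein-homological argument, and then deduce $(2)\Leftrightarrow(3)$ from the duality provided by Theorem 4.13(3).

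First I would establish $(1)\Leftrightarrow(2)$. Since $\Lambda$ is an almost $n$-minimal Auslander-Gorenstein algebra, it is in particular $(n+1)$-Gorenstein, so $\id\,_{\Lambda}\Lambda \leq n+1$. If $\gl\,\Lambda < \infty$, then for any Gorenstein algebra $\gl\,\Lambda = \id\,_{\Lambda}\Lambda \leq n+1$, and combined with the hypothesis $I\text{-}{\dom}\,\Lambda \geq n+1$ this gives that $\Lambda$ is an almost $n$-Auslander algebra. Conversely, $(1)$ gives $\gl\,\Lambda \leq n+1$, which is finite.

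Next I would prove $(2)\Leftrightarrow(3)$. For a Gorenstein algebra, finite global dimension is equivalent to $\mathcal{GP}(\Lambda) = {\add}\,\Lambda$, that is, every Gorenstein projective module is projective. By Theorem 4.13(3), $\Hom_{A}(-,M)$ induces a duality between $M^{\perp_{F}}$ and $\mathcal{GP}(\Lambda)$. Under this duality, the subcategory ${\add}\,M$ is sent to ${\add}\,\Lambda$, because $\Hom_{A}(M,M) \cong \Lambda$ and the functor preserves direct summands and finite direct sums. Since the duality is an equivalence of categories, we have $M^{\perp_{F}} = {\add}\,M$ if and only if $\mathcal{GP}(\Lambda) = {\add}\,\Lambda$, which by the previous remark is equivalent to $\gl\,\Lambda < \infty$.

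There is no serious obstacle here; the main content has been packaged into Theorem 4.13, and the only ingredients needed are the standard characterization of Gorenstein algebras of finite global dimension and the observation that the duality $\Hom_{A}(-,M)$ matches projective-injective objects of $M^{\perp_{F}}$ with projective $\Lambda$-modules.
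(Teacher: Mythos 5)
Your proof is correct and takes essentially the same route as the paper: for $(2)\Leftrightarrow(3)$ both arguments rest on the duality $\Hom_A(-,M): M^{\perp_F}\to\mathcal{GP}(\Lambda)$ from Theorem 4.13(3) together with the identification of $\add\,M$ with $\add\,\Lambda$. The only cosmetic difference is in $(1)\Leftrightarrow(2)$: where the paper cites \cite[Proposition 3.15(1)]{AT}, you argue directly from the standard fact that $\gl\,\Lambda=\id\,_{\Lambda}\Lambda$ whenever the global dimension is finite, which is a perfectly valid and slightly more self-contained substitute.
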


\begin{proof}
$(1)\Longleftrightarrow(2)$: This follows from \cite[Proposition 3.15(1)]{AT}.

$(2)\Longleftrightarrow(3)$: We have a duality ${\Hom}_{A}(-,M):{\add}\,M\rightarrow \mathcal{P}(\Lambda)$. Note that $\Lambda$ is a Gorenstein algebra. By Theorem 4.13(3), $M^{\perp_{F}}={\add}\,M$ if and only if $\mathcal{GP}(\Lambda)=\mathcal{P}(\Lambda)$ if and only if ${\gl}\,\Lambda<\infty$.
\end{proof}

For an $n$-precluster tilting $A$-module $N$, $^{\perp_{n-1}}N=N^{\perp_{n-1}}$ holds according to \cite[Proposition 3.8]{IS}. We give a similar result for an almost $n$-precluster tilting $A$-module $M$ by calculating the relative dominant dimension of Gorenstein projective $\Lambda$-modules.

\begin{prop}
Keep the notations in Theorem {\rm 4.13}. We have the following.

$(1)$ $I\text{-}{\dom}\,X\geq n+1$ holds for any Gorenstein projective $\Lambda$-module $X$.

$(2)$ If $n\geq2$, $M^{\perp_{n-1}}\subseteq $$^{\perp_{n-1}}M$ holds.
\end{prop}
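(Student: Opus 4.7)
The approach is to reduce (2) to (1) via the duality of Theorem 4.13(3), and then prove (1) directly using a Cartan--Eilenberg construction applied to the syzygy presentation coming from $\Lambda$ being $(n+1)$-Gorenstein.

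For $n\ge 2$, every $X\in\mathcal{GP}(\Lambda)$ is of the form $X\cong\Hom_{A}(L,M)$ for some $L\in M^{\perp_{F}}=M^{\perp_{n-1}}$ (the equality is Theorem 4.13(2)). Since $M$ is almost $n$-precluster tilting, it satisfies the hypotheses of Theorem 3.3, so Proposition 3.5 applies and gives
\[
I\text{-}\dom X \;=\; \inf\{\,i\ge 1\mid \Ext^{i}_{A}(L,M)\ne 0\,\}+1.
\]
Thus $I\text{-}\dom X\ge n+1$ is equivalent to $\Ext^{i}_{A}(L,M)=0$ for $1\le i\le n-1$, i.e.\ to $L\in{}^{\perp_{n-1}}M$. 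As $L$ ranges over all of $M^{\perp_{n-1}}$, this shows (1) and (2) are equivalent for $n\ge 2$; for $n=1$, (1) is automatic from Proposition 3.5 (the infimum is always at least $1$). So it suffices to prove (1).

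For (1), fix $X\in\mathcal{GP}(\Lambda)$. Since $\Lambda$ is $(n+1)$-Gorenstein, the complete projective resolution of $X$ yields an exact sequence
\[
0\to X\to P^{0}\to P^{1}\to\cdots\to P^{n}\to Y\to 0
\]
with each $P^{j}$ projective and $Y$ again Gorenstein projective. Each $P^{j}$ is a summand of some $\Lambda^{k_{j}}$, so $I\text{-}\dom P^{j}\ge n+1$, and the first $n+1$ terms of its minimal injective coresolution lie in $\add I$. I would splice this information together via a Cartan--Eilenberg construction: view $C^{\bullet}:\ P^{0}\to P^{1}\to\cdots\to P^{n}\to Y$ as a bounded complex in degrees $0$ through $n+1$ (so that $H^{0}(C^{\bullet})=X$ and $H^{k}(C^{\bullet})=0$ for $k\ge 1$), choose minimal injective resolutions $J^{\bullet,j}$ of each $C^{j}$, and form the total complex $J^{\bullet}=\mathrm{Tot}^{\oplus}(J^{\bullet,\bullet})$ with $J^{m}=\bigoplus_{i+j=m}J^{i,j}$. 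Cartan--Eilenberg theory provides a quasi-isomorphism $C^{\bullet}\to J^{\bullet}$, so $0\to X\to J^{0}\to J^{1}\to\cdots$, augmented by $X\hookrightarrow P^{0}\hookrightarrow J^{0,0}$, is an injective coresolution of $X$. For $0\le m\le n$, every summand $J^{i,j}$ of $J^{m}$ satisfies $i,j\le n$, so $C^{j}=P^{j}$ is projective and $J^{i,j}$ lies among the first $n+1$ terms of the minimal injective resolution of $P^{j}$, hence in $\add I$. Therefore $J^{m}\in\add I$ for $0\le m\le n$, and since the minimal injective coresolution of $X$ is a termwise direct summand of $J^{\bullet}$ and $\add I$ is closed under direct summands, $I\text{-}\dom X\ge n+1$. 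Part (2) then follows from the equivalence of the first paragraph.

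The main obstacle will be the careful Cartan--Eilenberg setup: one must verify that the total complex really does provide an injective coresolution of $X$ augmenting via $X\hookrightarrow P^{0}\hookrightarrow J^{0,0}$, and track the bidegrees so that only the \emph{safe} summands $J^{i,j}$ with both $i,j\le n$ contribute to $J^{m}$ when $m\le n$; this bookkeeping is what forces every such $J^{i,j}$ to come from a projective column $P^{j}$ and to sit in the range where $I\text{-}\dom P^{j}\ge n+1$ puts it in $\add I$.
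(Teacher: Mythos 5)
Your reduction of (2) to (1) is correct and matches the paper's treatment exactly: by Theorem 4.13(2)--(3) together with Proposition 3.5, for $X\in M^{\perp_{n-1}}=M^{\perp_F}$ the module $\Hom_A(X,M)$ is Gorenstein projective and $I\text{-}\dom\,\Hom_A(X,M)=\inf\{i\ge 1\mid\Ext^i_A(X,M)\ne 0\}+1$, so (1) forces $\Ext^i_A(X,M)=0$ for $1\le i\le n-1$.

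The Cartan--Eilenberg argument for (1), however, has a genuine gap. You propose to choose \emph{minimal} injective resolutions $J^{\bullet,j}$ of each $C^j$ independently and then take $\mathrm{Tot}^{\oplus}$, asserting that Cartan--Eilenberg theory supplies a quasi-isomorphism $C^\bullet\to J^\bullet$. But independently chosen resolutions do not assemble into a double complex: there is no reason the lifted horizontal maps $J^{\bullet,j}\to J^{\bullet,j+1}$ square to zero (only to a null-homotopy), and without a genuine double complex there is no total complex to speak of. The actual Cartan--Eilenberg resolution is built via iterated horseshoe lemmas; its $j$-th column has terms of the form $I^{k}_{B^j}\oplus I^{k}_{H^j}\oplus I^{k}_{B^{j+1}}$, where $B^j$ and $B^{j+1}$ are the boundary subobjects of $C^\bullet$, and so is \emph{not} the minimal injective resolution of $P^j$. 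In your situation $B^j=\mathrm{im}(P^{j-1}\to P^j)$ is precisely the $(n+1-j)$-th syzygy of $Y$, and to place $I^{k}_{B^j}$ in $\add I$ for the relevant range of $k$ you would need to know $I\text{-}\dom\,B^j\ge n+2-j$. That is exactly the inductive statement that must be established --- the Cartan--Eilenberg machinery has not avoided the induction, it has only hidden it. Concretely, already in column $j=0$ the term in total degree $0$ is $I^0_X\oplus I^0_{B^1}$, and knowing $I^0_X\in\add I$ is part of the conclusion, so the argument as written is circular.

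The paper handles (1) by the direct and much shorter induction you were implicitly trying to encode: it uses the inequality from \cite[Corollary 3.2(1)]{CX}, namely that a short exact sequence $0\to A\to B\to C\to 0$ gives $I\text{-}\dom\,A\ge\min\{I\text{-}\dom\,B,\,I\text{-}\dom\,C+1\}$, applied step by step to the syzygy sequences $0\to Y_{k}\to P_{k}\to Y_{k}'\to 0$. Since $I\text{-}\dom\,P\ge n+1$ for all projectives $P$ (this is where $I\text{-}\dom\,\Lambda\ge n+1$ is used), one gets $I\text{-}\dom\,Y_k\ge k$ for $Y_k\in\Omega^k(\Lambda\text{-mod})$, and $\mathcal{GP}(\Lambda)=\Omega^{n+1}(\Lambda\text{-mod})$ because $\Lambda$ is $(n+1)$-Gorenstein. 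You should replace the Cartan--Eilenberg step with this one-short-exact-sequence-at-a-time induction.
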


\begin{proof}
(1) Since $\Lambda$ is an almost $n$-minimal Auslander-Gorenstein algebra, we have $I\text{-}{\dom}\,P\geq n+1$ for any projective $\Lambda$-module $P$. Let $Y_{1}$ be in $\Omega({\Lambda\text{-}{\mo}})$. There exists an exact sequence $0\rightarrow Y_{1}\rightarrow P_{1}\rightarrow Y^{'}_{1} \rightarrow 0$ where $P_{1}$ is a projective $\Lambda$-module. By \cite[Corollary 3.2(1)]{CX}, $I\text{-}{\dom}\,Y_{1}\geq\text{min}\{I\text{-}{\dom}\,P_{1}, \,I\text{-}{\dom}\,Y_{1}^{'}+1\}\geq 1$. Let $Y_{2}$ be in $\Omega^{2}(\Lambda\text{-}{\mo})$. Then there exists an exact sequence $0\rightarrow Y_{2} \rightarrow P_{2} \rightarrow Y_{2}^{'}\rightarrow 0$ where $P_{2}$ is a projective $\Lambda$-module and $Y^{'}_{2}$ is in $\Omega({\Lambda\text{-}{\mo}})$.  By \cite[Corollary 3.2(1)]{CX} again, we have $I\text{-}{\dom}\,Y_{2}\geq\text{min}\{I\text{-}{\dom}\,P_{2}, \,I\text{-}{\dom}\,Y_{2}^{'}+1\}\geq 2$. Continuing this procedure, we get     $I\text{-}{\dom}\,Y_{n+1}\geq n+1$ for any $Y_{n+1}$ in $\Omega^{n+1}(\Lambda\text{-}{\mo})$.

Note that $\mathcal{GP}(\Lambda)=\Omega^{n+1}(\Lambda\text{-}{\mo})$ since $\Lambda$ is $(n+1)$-Gorenstein. Then $I\text{-}{\dom}\,X\geq n+1$ holds for any $X$ in $\mathcal{GP}(\Lambda)$.

(2) Let $X$ be in $M^{\perp_{n-1}}$. According to Theorem 4.13, ${\Hom}_{A}(X,M)$ is a Gorenstein projective $\Lambda$-module. Then $I\text{-}{\dom}\,{\Hom}_{A}(X,M)\geq n+1$ by (1). On the other hand, we have $I\text{-}{\dom}\,{\Hom}_{A}(X,M)={\rm inf}\{i\geq1\mid {\Ext}^{i}_{A}(X,M)\neq0\}+1$ by Proposition 3.5. Thus ${\rm inf}\{i\geq1\mid {\Ext}^{i}_{A}(X,M)\neq0\}\geq n$ holds. This implies $X\in$$ ^{\perp_{n-1}}M$.
\end{proof}

\subsection{Almost n-cluster tilting modules}
In Corollary 4.14 we get a class of almost $n$-precluster tilting modules $M$ with $M^{\perp_{F}}={\add}\,M$ which correspond to almost $n$-Auslander algebras. Since almost $n$-Auslander algebras are a generalization of $n$-Auslander algebras and $n$-cluster tilting modules correspond to $n$-Auslander algebras (see \cite{I2}), we get the following generalization of $n$-cluster tilting modules.

\begin{definition}Let $A$ be an Artin algebra and $M$ be an $A$-module. We call $M$ an almost $n$-cluster tilting $A$-module if it satisfies ${\add}\,M=M^{\perp_{n-1}}$, $\tau_{n}M\in{\add}\,M$ and $M\text{-}{\rm codim}\,A\leq 1$.
\end{definition}

The almost 1-cluster tilting module is just $A$-mod for a representation-finite algebra $A$. For $n\geq2$, by Theorem 4.11 and Corollary 4.14, there is a one-to-one correspondence between the equivalence classes of almost $n$-cluster tilting modules over Artin algebras and the Morita-equivalence classes of almost $n$-Auslander algebras.

Obviously we have the relation for the following three classes of modules:

\noindent \{$n$-cluster tilting modules\} $\subseteq$ \{almost $n$-cluster tilting modules\} $\subseteq$ \{almost $n$-precluster tilting modules\}.

Fix an integer $n$, a natural question is whether the numbers of pairwise non-isomorphic indecomposable direct summands of modules in each class are equal. Iyama proved in \cite{I2} that all 2-cluster tilting modules over an Artin algebra share the same number of pairwise non-isomorphic indecomposable direct summands. In \cite{CS}, Chen and K\"{o}nig generalized Iyama's result to maximal 2-precluster tilting modules. We give the following result for almost 2-cluster tilting modules.

\begin{prop}
Let $M$ and $N$ be almost $2$-cluster tilting modules over an Artin algebra $A$. Then $\Lambda_{M}={\End}_{A}M$ and $\Lambda_{N}={\End}_{A}N$ are derived equivalent. In particular, $|M|=|N|$ holds.
\end{prop}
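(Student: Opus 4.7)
The plan is to establish the derived equivalence of $\Lambda_{M}$ and $\Lambda_{N}$ by producing a tilting complex over $\Lambda_{M}$ whose opposite endomorphism algebra in $K^{b}({\rm proj}\,\Lambda_{M})$ is isomorphic to $\Lambda_{N}$; Rickard's theorem then yields the equivalence. The equality $|M|=|N|$ is immediate from the derived equivalence, since derived equivalent basic Artin algebras have Grothendieck groups of the same rank, and the rank of $K_{0}(\Lambda_{M})$ equals $|M|$.

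First I would record the setup. By Theorem 4.8 and Corollary 4.14, both $\Lambda_{M}$ and $\Lambda_{N}$ are almost $2$-Auslander algebras, hence Gorenstein of finite global dimension (at most $3$). By Theorem 4.13 combined with Theorem 2.2(3), the contravariant functor ${\Hom}_{A}(-,M):A\text{-}{\mo}\to \Lambda_{M}\text{-}{\mo}$ is fully faithful and carries $F^{M}$-relative ${\Ext}$ on $A\text{-}{\mo}$ to ${\Ext}$ over $\Lambda_{M}$, and the symmetric statement holds for $N$ and $\Lambda_{N}$. These two dualities are the main technical device.

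Next I would construct the candidate tilting complex. Since $DA\in {\add}\,M$, every $A$-module embeds into an object of ${\add}\,M$, so by iterating one obtains an $F^{M}$-exact coresolution
\[
0 \to N \to M^{0}\to M^{1}\to \cdots \to M^{r} \to 0
\]
with each $M^{i}\in {\add}\,M$; its length is finite because $\Lambda_{M}$ has finite global dimension and $A$ is $F^{M}$-Gorenstein (Proposition 2.4), forcing the $F^{M}$-global dimension of $A$ to be finite. Applying ${\Hom}_{A}(-,M)$ yields a bounded complex $T^{\bullet}$ of finitely generated projective $\Lambda_{M}$-modules, and the full faithfulness of ${\Hom}_{A}(-,M)$ identifies ${\End}_{K^{b}({\rm proj}\,\Lambda_{M})}(T^{\bullet})^{\op}$ with ${\End}_{A}(N)=\Lambda_{N}$. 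It remains to verify that $\Hom_{K^{b}({\rm proj}\,\Lambda_{M})}(T^{\bullet},T^{\bullet}[i])=0$ for $i\neq 0$ and that $T^{\bullet}$ generates $K^{b}({\rm proj}\,\Lambda_{M})$ as a thick subcategory. The $\Hom$-vanishing reduces via Theorem 2.2(3) to ${\Ext}^{i}_{F^{M}}(N,N)=0$ for $i\geq 1$, which I would extract from the almost $2$-cluster tilting conditions ${\add}\,N=N^{\perp_{1}}$ and $\tau_{2}N\in {\add}\,N$, together with the inclusion $M^{\perp_{1}}\subseteq {}^{\perp_{1}}M$ of Proposition 4.15 and the identity ${\add}\,M={\add}(\tau_{2}M\oplus DA)$ of Proposition 4.6. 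Generation is checked by placing $\Lambda_{M}$ in the thick closure of $T^{\bullet}$, via an analogous $F^{N}$-projective resolution of $M$. Rickard's theorem then yields $D^{b}(\Lambda_{M})\simeq D^{b}(\Lambda_{N})$ and thereby also $|M|=|N|$.

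The hardest step will be the vanishing of ${\Ext}^{i}_{F^{M}}(N,N)$. Since $N$ is in general not in ${\add}\,M$, one cannot read this vanishing off from the $F^{M}$-injectivity of $M$; instead the argument must track the higher $F^{M}$-syzygies of $N$ and exploit the almost $2$-cluster tilting structures on $M$ and on $N$ simultaneously. This symmetric use of the two hypotheses is the new ingredient beyond the classical $2$-cluster tilting situation and beyond Chen-K\"{o}nig's treatment of maximal $2$-precluster tilting modules.
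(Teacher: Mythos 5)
Your overall direction matches the paper's: apply $\Hom_{A}(-,M)$ to an $F^{M}$-exact coresolution of $N$, identify the endomorphism algebra of the resulting object in $K^{b}(\mathrm{proj}\,\Lambda_{M})$ with $\Lambda_{N}$, and conclude derived equivalence. But you miss the structural fact that makes everything collapse: the paper first proves (Lemma 4.18) that $\gl_{F^{M}}A\leq n-1$ for an almost $n$-cluster tilting module $M$, so here $\gl_{F^{M}}A\leq 1$. This bound means $\id_{F^{M}}N\leq 1$, so the coresolution you take to be of unknown length $r$ is actually a two-term sequence $0\to N\to M_{0}\to M_{1}\to 0$; applying $\Hom_{A}(-,M)$ then produces a $\Lambda_{M}$-\emph{module} of projective dimension at most $1$, not a general complex, and one is in the classical $1$-tilting situation rather than needing Rickard's theorem for tilting complexes.

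This is not a cosmetic difference; it is precisely what dissolves the step you flag as ``the hardest.'' Because $\gl_{F^{M}}A\leq 1$, the only relevant vanishing is $\Ext^{1}_{F^{M}}(N,N)=0$, and since $F^{M}$-relative $\Ext^{1}$ is a sub-bifunctor of $\Ext^{1}_{A}$, this is immediate from $\Ext^{1}_{A}(N,N)=0$ (the almost $2$-cluster tilting condition on $N$). Your proposal instead worries about $\Ext^{i}_{F^{M}}(N,N)$ for all $i\geq 1$, says you ``would extract'' it from a combination of Propositions 4.6 and 4.15 and ``tracking higher $F^{M}$-syzygies,'' and never supplies the argument; as written this is a genuine gap, and the phenomena you propose to chase simply do not arise. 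Likewise the generation condition for a tilting complex, which you defer, is sidestepped in the paper: $\Hom_{A}(N,M)$ is shown to be a \emph{partial} tilting module with $|N|$ summands, giving $|N|\leq|\Lambda_{M}|=|M|$, and by symmetry $|M|\leq|N|$; equality then forces $\Hom_{A}(N,M)$ to be a full $1$-tilting module and gives $|M|=|N|$ at the same stroke. Note also that in your plan you cannot read $|M|=|N|$ off from ``the derived equivalence'' before verifying the tilting complex conditions, so the logical order in your sketch is backwards. The fix is to prove and invoke the $F$-global dimension bound first; then the rest of your outline reduces to the paper's short argument.
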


To prove Proposition 4.17, we need the following lemma.

\begin{lemma}
Let $M$ be an almost $n$-cluster tilting module over an Artin algebra $A$ and $F=F^{M}$ be an additive sub-bifunctor of ${\Ext}^{1}_{A}(-,-)$. Then we have ${\gl}_{F}A\leq n-1$.
\end{lemma}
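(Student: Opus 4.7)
The plan is to bound the $F$-injective dimension of an arbitrary $A$-module $X$ by $n-1$, which will immediately give $\gl_{F}A\leq n-1$. First, I would observe that an almost $n$-cluster tilting module is in particular almost $n$-precluster tilting: the identity $\add\,M=M^{\perp_{n-1}}$ forces $DA\in\add\,M$ (since $DA$ is injective, so $\Ext^{i}_{A}(M,DA)=0$ for $i\geq 1$), hence $M$ is a cogenerator, and $M\in M^{\perp_{n-1}}$ gives $\Ext^{i}_{A}(M,M)=0$ for $1\leq i\leq n-1$; the remaining two axioms are built into the definition. Assuming $n\geq 2$, Theorem 4.13(2) then supplies $\mathcal{I}(F)=\add\,M$, $\pd_{F}M\leq n-1$, and $M^{\perp_{F}}=M^{\perp_{n-1}}$. (For $n=1$ the lemma is immediate: $\add\,M=M^{\perp_{0}}=A\text{-}{\mo}$, so $\mathcal{P}(F)=\add(A\oplus\tau^{-}M)=A\text{-}{\mo}$ and $\gl_{F}A=0$.)

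Now assume $n\geq 2$, fix any $X\in A\text{-}{\mo}$, and take its minimal $F$-injective resolution $0\to X\to I^{0}\to I^{1}\to\cdots$ with $I^{j}\in\mathcal{I}(F)=\add\,M$. Let $Y$ be the $(n-1)$-th $F$-cosyzygy, so that $0\to X\to I^{0}\to\cdots\to I^{n-2}\to Y\to 0$ is an $F$-exact sequence. If I can show $Y\in\add\,M$, then this resolution terminates at step $n-1$ and $\id_{F}X\leq n-1$, finishing the proof. Applying the standard dimension-shift in $F$-cohomology, together with the fact that $\Ext^{j}_{F}(M,I^{k})=0$ for $j\geq 1$ (since each $I^{k}$ is $F$-injective), yields $\Ext^{j}_{F}(M,Y)\cong\Ext^{j+n-1}_{F}(M,X)$ for every $j\geq 1$, and the right-hand side vanishes because $\pd_{F}M\leq n-1$. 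Hence $\Ext^{j}_{F}(M,Y)=0$ for all $j\geq 1$.

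To convert this into the desired vanishing of ordinary Ext, I would invoke the identification $\Ext^{j}_{F}(M,-)\cong\Ext^{j}_{A}(M,-)$ for $1\leq j\leq n-1$ from \cite[Proposition 2.5(1)]{IS} (already used in the proof of Theorem 4.13). This gives $\Ext^{j}_{A}(M,Y)=0$ for $1\leq j\leq n-1$, i.e.\ $Y\in M^{\perp_{n-1}}$, and the defining property of an almost $n$-cluster tilting module then yields $Y\in\add\,M=M^{\perp_{n-1}}$, as required. The only non-routine ingredient is the combined use of the bound $\pd_{F}M\leq n-1$ (a global statement about $M$) with the sharp identity $\add\,M=M^{\perp_{n-1}}$; once these two inputs are isolated, the dimension shift together with the comparison between relative and absolute Ext makes the estimate essentially formal.
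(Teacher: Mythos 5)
Your argument is correct, and it takes a slightly more explicit route than the paper. The paper, after invoking Theorem 4.13(1)--(2) to obtain $\pd_F M \leq n-1$ and $M^{\perp_F} = M^{\perp_{n-1}}$, cites the dual of \cite[Theorem~3.2(a)]{AS2} as a black box to get $M^{\perp_F}$-${\rm codim}\,X \leq n-1$ for every $X$, and then uses $\add\,M = M^{\perp_{n-1}} = M^{\perp_F} = \mathcal{I}(F)$ to conclude $\id_F X \leq n-1$. You instead carry out the dimension shift by hand: you take the minimal $F$-injective resolution, show the $(n-1)$-th cosyzygy $Y$ has $\Ext^j_F(M,Y) = 0$ for all $j \geq 1$ using $\pd_F M \leq n-1$, and then land in $\add\,M$. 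This amounts to re-proving the needed special case of the AS2 result rather than citing it, which makes your version more self-contained. One small simplification you could make: once you know $\Ext^j_F(M,Y) = 0$ for all $j \geq 1$, you already have $Y \in M^{\perp_F}$, and since you have quoted $M^{\perp_F} = M^{\perp_{n-1}} = \add\,M$ from Theorem 4.13(2) and the cluster-tilting hypothesis, the conclusion $Y \in \add\,M$ follows immediately; the detour through the comparison $\Ext^j_F(M,-) \cong \Ext^j_A(M,-)$ for $1 \leq j \leq n-1$ is correct but not needed.
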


\begin{proof}
If $n=1$, $A$ is representation-finite and ${\add}\,M=A$-mod. Then $\mathcal{I}(F)=A$-mod and ${\gl}_{F}A=0$.

Now assume $n\geq 2$. Obviously $M$ is also an almost $n$-precluster tilting $A$-module. According to Theorem 4.13 (1) and (2), $M$ is an $F$-tilting $A$-module with ${\pd}_{F}M\leq n-1$ and $M^{\perp_{F}}=M^{\perp_{n-1}}$. By the dual of \cite[Theorem 3.2(a)]{AS2}, $M^{\perp_{F}}\text{-}{\rm codim}\,X\leq n-1$ holds for any $A$-module $X$. On the other hand, we have ${\add}\,M=M^{\perp_{n-1}}$ since $M$ is an almost $n$-cluster tilting $A$-module. Note that $\mathcal{I}(F)={\add}\,M$, then ${\id}_{F}X \leq n-1$ holds for any $A$-module $X$. This implies ${\gl}_{F}A\leq n-1$.
\end{proof}

Now we are ready to prove Proposition 4.17.

\begin{proof}[Proof of Proposition 4.17] Since $M$ is an almost 2-cluster tilting $A$-module, by Lemma 4.18, we have ${\gl}_{F^{M}}A\leq 1$. Then ${\id}_{F^{M}}N\leq 1$ and there exists an $F^{M}$-exact sequence $0\rightarrow N \rightarrow M_{0} \rightarrow M_{1} \rightarrow 0$ with $M_{0},\,M_{1}\in {\add}\,M$. Applying ${\Hom}_{A}(-,M)$ yields a projective resolution $0 \rightarrow {\Hom}_{A}(M_{1},M)\rightarrow {\Hom}_{A}(M_{0},M)\rightarrow {\Hom}_{A}(N,M)\rightarrow 0 $ of $\Lambda_{M}$-module ${\Hom}_{A}(N,M)$. This shows that ${\pd}\,{\Hom}_{A}(N,M)\leq 1$.

Consider the following commutative diagram
\begin{footnotesize}
\[ \xymatrix{
0\ar[r] &((N,M),(N,M))\ar[d]^\cong \ar[r] &((M_{0},M),(N,M)) \ar[d]^\cong \ar[r] &((M_{1},M),(N,M))\ar[d]^\cong \ar[r]& 0\\
0\ar[r] & (N,N)  \ar[r] & (N,M_{0}) \ar[r] & (N,M_{1})\ar[r] & 0 .& }
\]
\end{footnotesize}The vertical maps are isomorphisms since the functor ${\Hom}_{A}(-,M)$ is fully faithful. The lower row is exact due to ${\Ext}^{1}_{A}(N,N)=0$. Then the upper row is also exact. This implies that ${\Ext}^{1}_{\Lambda_{M}}({\Hom}_{A}(N,M),{\Hom}_{A}(N,M))=0$. So ${\Hom}_{A}(N,M)$ is a partial tilting $\Lambda_{M}$-module and we have $|N|=|{\Hom}_{A}(N,M)|\leq |\Lambda_{M}|=|M|$. Similarly, we can get $|M|\leq|N|$. Thus $|M|=|N|$ holds and ${\Hom}_{A}(N,M)$ is a 1-tilting $\Lambda_{M}$-module. Note that  ${\End}_{\Lambda_{M}}{\Hom}_{A}(N,M)\cong{\End}_{A}N={\Lambda_{N}}$, then $\Lambda_{M}$ and $\Lambda_{N}$ are derived equivalent.
\end{proof}

\begin{remark}\rm
${\Hom}_{A}(-,M)$ sends almost 2-cluster tilting $A$-modules to 1-tilting $\Lambda_{M}$-modules. Moreover, it is easy to check that ${\Hom}_{A}(-,M)$ induces a duality between $A$-mod and the subcategory of $\Lambda_{M}$-modules with projective dimension at most 1. This is a generalization of
\cite[Proposition 4.4]{GLS}.
\end{remark}

\section*{Acknowledgements}
The authors would like to thank the referees for valuable suggestions and comments. The first author is grateful to Professor Nan Gao for helpful discussions. This work is supported by the National Natural Science Foundation of China (11671230).

\newpage
Shen Li: School of Mathematics, Shandong University, PR China

\emph{E-mail address}: fbljs603@163.com

Shunhua Zhang: School of Mathematics, Shandong University, PR China

\emph{E-mail address}: shzhang@sdu.edu.cn

\end{document}